\newcommand{\be}{\begin{enumerate}\item}
\newcommand{\ee}{\end{enumerate}}
\newtheorem{thm}{Theorem}[section]
\newtheorem{cor}[thm]{Corollary}
\newtheorem{lem}[thm]{Lemma}
\newtheorem{exa}[thm]{Example}
\newtheorem{exas}[thm]{Examples}
\newtheorem{pro}[thm]{Proposition}
\def\ol{\overline}
\def\R{\mathbb{R}}
\def\s3{\mathfrak{s}^3}
\def\bi{\begin{enumerate}}
\def\ei{\end{enumerate}}
\newcommand{\grad}{{\rm grad}}
\newcommand{\Ric}{{\rm Ric}}
\newcommand{\rick}{{\rm ric}}
\newcommand{\sign}{{\rm sign}}
\newcommand{\bd} {\begin{displaymath}}
\newcommand{\ed} {\end{displaymath}}
\newcommand{\diver}{{\rm div}}
\title{Conformally Einstein product spaces}
\author[Wolfgang K\"uhnel \& Hans-Bert Rademacher]{Wolfgang 
K\"uhnel \& Hans-Bert Rademacher}
\begin{document}
%Version June 29, 2015 

\thanks{This work was done while the first author enjoyed
the hospitality of the Math.\ Dept.\ at the 
University of Leipzig. It was partially
supported by the DFG under the program SPP 1154.}
\subjclass[2010]{Primary: 53C25; Secondary: 53B30, 53C50, 53C80, 83C20.}
\keywords{conformal mapping, Einstein metric, warped product}
\begin{abstract}
We study pseudo-Riemannian Einstein manifolds
which are conformally equivalent with a metric product of two
pseudo-Riemannian manifolds. 
Particularly interesting is the case where one of these
manifolds is 1-dimensional and the case where the conformal factor
depends on both manifolds simultaneously.
If both factors are at least 3-dimensional then the 
latter case reduces to the product of
two Einstein spaces, each of the special type
admitting a non-trivial conformal gradient field.
These are completely classified.
If each factor is 2-dimensional, there is a special family
of examples of non-constant curvature
(called extremal metrics by Calabi), where in each factor
the gradient of the Gaussian curvature is a conformal vector field. 
Then the metric of the 2-manifold is a warped product where the warping function
is the 
%%%%%%%%%%%%%%%%%%%%%%%%%%%Change%%%%%%%%%%%%%%%%%%%%%%%%%%
first
%%%%%%%%%%%%%%%%%%%%%%%%%%%%%%%%%%%%%%%%%%%%%%%%%%%%%%%%%%%%%
derivative of the Gaussian curvature.
Moreover we find explicit examples of 
Einstein warped products with a 1-dimensional fibre and such
with a 2-dimensional base.
Therefore in the 4-dimensional case our Main Theorem 
points towards a local classification of conformally Einstein products.
Finally we prove an assertion in the book by A.Besse on complete
Einstein warped products with a 2-dimensional base. All solutions
can be explicitly written in terms of integrals of elementary functions.
\end{abstract}
\maketitle
\section{Introduction and notations}
\label{sec:basic}
%%%%%%%%%%%%%%%%%%%%%%%%%%%%%%%%%%%%%%%%%%%%%%%%%%%%%%%%%

\medskip
We consider a {\sf pseudo-Riemannian manifold} $(M,g),$
which is defined as a smooth $n$-manifold $M$ 
(here {\it smooth} means of class 
%%%%%%%%%%%%%%%%%%%%%Change
%$C^{\infty}$, at least 
$C^4$) together
with a pseudo-Riemannian metric of arbitrary signature 
$(j,n-j), 0 \le j \le n.$
All manifolds are asssumed to be connected.
A {\sf conformal mapping} between two pseudo-Riemannian manifolds 
$(M,g), (N,h)$ is a smooth mapping $F: (M,g) \rightarrow (N,h)$
with the property $F^*h =\varphi^{-2}\,g$ for a smooth
positive function $\varphi : M \rightarrow \R_+.$ 
In more detail this means that the equation
$$ h_{F(x)}\left(dF_x(X), dF_x(Y)\right)=\varphi^{-2}(x)g_x(X,Y)$$
holds for all tangent vectors $X,Y \in T_xM.$ Particular cases are
{\sf homotheties} for which 
$\varphi$ is constant, and {\sf isometries}
for which $\varphi=1.$

\medskip
A (local) one-parameter group $\Phi_t$ of conformal mappings of a
manifold into itself generates a 
{\sf conformal (Killing) vector field} 
$V$, sometimes also called
an {\sf infinitesimal conformal transformation,} 
with $V = \frac{\partial}{\partial t}\Phi_t$. 
We need to assume that $V$ itself is of class at least $C^3$. Conversely, 
any conformal vector field generates a local one-parameter group
of conformal mappings.
It is well known \cite{Ya57} that a vector field $V$ is 
conformal if and only if the {\sf Lie derivative}
${\mathcal L}_V g$ of the metric $g$ in direction of the vector field 
$V$ satisfies the equation 
\begin{equation}
\label{eq:nabla}
{\mathcal L}_V g= 2\sigma g
\end{equation} for a certain
smooth function $\sigma: M \rightarrow \R$.
Necessarily this conformal factor $\sigma$ coincides with the
divergence of $V$, up to a constant: 
$\sigma = \diver V/n\,.$ Particular cases of conformal vector fields are
{\sf homothetic vector fields} 
for which $\sigma$ is constant, and 
{\sf isometric vector fields},
also called {\sf Killing vector fields},
for which $\sigma =0.$  On a (pseudo-)Euclidean space the divergence
of a conformal vector field is always a linear function.
%This follow from Corollary~\ref{cor:ODE} below.

\medskip
Furthermore it is well known that
the image of a lightlike geodesic under any conformal mapping
is again a lightlike geodesic and that
for any lightlike geodesic $\gamma$ and any conformal
vector field $V$ the quantity $g(\gamma',V)$ is constant along 
$\gamma$.
Conformal vector fields $V$ with non-vanishing $g(V,V)$ 
can be made into Killing fields
within the same conformal class of metrics,
namely, for the metric
 $\ol{g}= |g(V,V)|^{-1} \,g\,.$
This is a special case of a so-called inessential conformal vector
field.

\medskip
A vector field $V$ on a pseudo-Riemannian manifold is called {\sf closed}
if it is locally a gradient field,
i.e., if locally there exists a function $f$ such that
$V=\grad f$. Consequently, from Equation~\ref{eq:nabla}
and $\; {\mathcal L}_Vg = 2\nabla ^2f \;$ 
we see that a closed vector field $V$ is conformal if and only
\begin{equation}
\label{eq:closed}
\nabla_X V=\sigma X 
\end{equation}
for all $X$ or, equivalently $\nabla^2f = \sigma g$. Here  
$\nabla^2 f (X,Y)=g\left(\nabla_X\grad f,Y\right)\,$
denotes the {\sf Hessian} $(0,2)$-tensor
and $n \sigma = \Delta f  = \diver \ (\grad \ f)\;$
is the {\sf Laplacian} of $f$.
If the symbol $\; ( \quad  )^{\circ}\;$
denotes the traceless part of a $\; (0,2)$-tensor,  
then $\; \grad \ f\;$ is conformal if and only if $\; (\nabla ^2 f)^{\circ}
\equiv 0\;$.
This equation 
\begin{equation}
\label{eq:hess}
(\nabla^2 f)^{\circ} = 0
\end{equation}
allows explicit solutions
in many cases, for Riemannian as well as for pseudo-Riemannian manifolds. 
%See the discussion in Section~\ref{sec:gradient} below.

\medskip

%A vector field is called {\sf complete} if its
%flow is globally defined as a 1-parameter group  
%$(\Phi_t)_{t \in \mathbb{R}}$ of diffeomorphisms, i.e.
%a global smooth mapping $\Phi \colon \R \times M \rightarrow M$, 
%$(t,x) \mapsto \Phi_t(x)$ satisfying 
%$\Phi_{t+s} = \Phi_t \circ \Phi_s$ and $\Phi_0 = {\rm Id}$.
As usual, 
\begin{equation}
\label{eq:R}
R(X,Y)Z = \nabla_X\nabla_YZ - \nabla_Y\nabla_XZ 
- \nabla_{[X,Y]}Z
\end{equation}
denotes the {\sf curvature $(1,3)$-tensor}.
Then the {\sf Ricci tensor}
as a symmetric $(0,2)$-tensor is defined
by the equation 
$$\Ric(X,Y)={\rm trace} \big(V \mapsto R(V,X)Y\big).$$
The associated $(1,1)$ tensor is denoted by 
$\rick$. Thus $\Ric(X,Y)=g\left(\rick(X),Y\right).$ Its trace
$S= {\rm trace} \big(V \mapsto \rick(V)\big)$
is called the {\sf scalar curvature.}
A manifold is {\sf conformally flat,}
if every point has a neighborhood
which is conformally equivalent to an open subset of 
a pseudo-Euclidean space.

\medskip
A pseudo-Riemannian manifold 
is called an {\sf Einstein space}
if the equation
\begin{equation}
\label{eq:Einstein}
\Ric = \lambda g
\end{equation}
holds with a factor $\lambda=S/n$ which is necessarily constant
if $n\geq 3$ and which is then called the {\sf Einstein constant}.
%%%%%%%%%%%%%%%%%%%%%Change%%%%%%%%%%%%%%%%%%%%%%
On a surface, i.e. for $n=2$ a pseudo-Riemannian metric is
called Einstein, if it has constant Gaussian curvature.
%%%%%%%%%%%%%%%%%%%%%%%%%%%%%%%%%%%%%%%%%%%%%%%%%%
For convenience the {\sf normalized Einstein constant} 
or {\sf normalized scalar curvature} 
will be denoted by $k = \lambda/(n-1)$ so that
we have $k=1$ on the unit sphere of any dimension.
For $n=2$ we have $\lambda = k = S/2 = K$ (Gaussian curvature).
For a survey on Einstein spaces in general we refer to
\cite{Be87}, \cite{De00}. A short version of this article is \cite{KR13}.
%%%%%%%%%%%%%%%%%%%%%%%%%%%%%%%%%%%%%%%%%%%%%%%%%%%%%%%%%%%%%%%%%%%
%%%%%%%%%%%%%%%%%%%%%%%%%%%%%%%%%%%%%%%%%%%%%%%%%%%%%%%%%%%%%%%%%%%
\section{Conformally  Einstein spaces: Basic equations}
\label{sec:Basic} 
We start with the very basic formula for the change of the Ricci
tensor under a conformal change of the metric.
This formula provides a way to classify all conformal
mappings between two Einstein spaces.
It is -- however -- more difficult to classify those Riemannian
metrics that are conformally Einstein, see 
\cite{Br23}, \cite{GN06}, \cite{GL09}, \cite{HPW}, \cite{MO}.
This is somehow in contrast with the case of conformally flat metrics.
It is clear that a 3-dimensional metric is conformally Einstein
if and only if it is conformally flat.
The problem becomes interesting in 
%%%%%%%%%%%%%%%%change%%%%%%%%%%%%%%%%%%%%%%%%%%%%%
dimensions $n \geq 4$.
Here it is well known that 
%%%%%%%%%%%%%%%%%%change%%%%%%%%%%%%%%%%%%%%%%%%%%%%%%%%%%%%%%%%
a harmonic Weyl tensor is a necessary condition
for a metric to be conformally Einstein. However, sufficient or equivalent
conditions are difficult to obtain, see \cite{Li01}, \cite{Li06}.
The case of conformally Einstein K\"ahler metrics was treated
in \cite{DM11},\cite{Masch}.
\begin{lem}
\label{lem:ric}
The following formula holds for any conformal change
$g \mapsto \overline{g} = \varphi^{-2}g$ of a metric on an
$n$-dimensional
manifold:
\begin{equation}
\label{eq:ric}
\overline{\Ric} - \Ric = \varphi^{-2} \Big( (n-2) \cdot \varphi \cdot 
\nabla^2 \varphi + \Big[\varphi \cdot \Delta \varphi - (n-1) \cdot 
\|\grad \varphi\|^2 \Big]\cdot g \Big).
\end{equation}

Consequently, the metric $\overline{g}$ is Einstein if and only if
the equation
\begin{equation}
\label{eq:conf-Einstein}
\varphi \cdot \Ric + (n-2) \cdot \nabla^2\varphi
=\theta \cdot g
\end{equation}
holds for some function $\theta$ or, equivalently,
$$\varphi \cdot (\Ric)^\circ + (n-2)\cdot (\nabla^2\varphi)^\circ =0$$
where $( \ )^\circ$ denotes the trace-free part.

\end{lem}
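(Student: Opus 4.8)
The plan is to derive the transformation law~\eqref{eq:ric} by first computing how the Levi-Civita connection changes under $g\mapsto\overline g$ and then contracting the resulting curvature identity; the consequence about Einstein metrics is then pure algebra. To this end write $\varphi=e^{-f}$, so that $\overline{g}=e^{2f}g$ and $df=-\varphi^{-1}\,d\varphi$. Applying the Koszul formula to $\overline{g}$ shows that the difference tensor $A(X,Y):=\overline{\nabla}_XY-\nabla_XY$ equals
$$A(X,Y)=(Xf)\,Y+(Yf)\,X-g(X,Y)\,\grad f,$$
with $\grad$ and all traces below taken with respect to $g$; one checks directly that $\nabla+A$ is torsion-free and compatible with the metric $\overline{g}$, hence coincides with $\overline{\nabla}$.

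Substituting $\overline{\nabla}=\nabla+A$ into~\eqref{eq:R} gives
$$\overline{R}(X,Y)Z=R(X,Y)Z+(\nabla_XA)(Y,Z)-(\nabla_YA)(X,Z)+A\big(X,A(Y,Z)\big)-A\big(Y,A(X,Z)\big),$$
and I would then take the trace over the first slot to read off $\overline{\Ric}(Y,Z)$. Writing $\nabla_XA$ in terms of $\nabla^2f$ and $df\otimes df$ and contracting the two quadratic terms, the bookkeeping collapses to
$$\overline{\Ric}=\Ric-(n-2)\big(\nabla^2f-df\otimes df\big)-\big(\Delta f+(n-2)\,\|\grad f\|^2\big)\,g.$$
This contraction — keeping track of the several $df\otimes df$ and $\|\grad f\|^2\,g$ terms, some of which cancel — is the most laborious step, though entirely routine and without conceptual difficulty. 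Converting back via $\nabla^2f=-\varphi^{-1}\nabla^2\varphi+\varphi^{-2}\,d\varphi\otimes d\varphi$ and $\Delta f=-\varphi^{-1}\Delta\varphi+\varphi^{-2}\,\|\grad\varphi\|^2$, so that $\nabla^2f-df\otimes df=-\varphi^{-1}\nabla^2\varphi$ and $\Delta f+(n-2)\|\grad f\|^2=-\varphi^{-1}\Delta\varphi+(n-1)\varphi^{-2}\|\grad\varphi\|^2$, and collecting the common factor $\varphi^{-2}$, yields exactly~\eqref{eq:ric}.

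For the stated consequence I would simply pass to trace-free parts in~\eqref{eq:ric}: the bracketed summand is a pure multiple of $g$, hence $(\overline{\Ric})^{\circ}-(\Ric)^{\circ}=(n-2)\,\varphi^{-1}\,(\nabla^2\varphi)^{\circ}$. Since $\overline{g}=\varphi^{-2}g$ is proportional to $g$, a symmetric $(0,2)$-tensor is a pointwise multiple of $\overline{g}$ if and only if it is a pointwise multiple of $g$, i.e. if and only if its trace-free part vanishes; so $\overline{g}$ is Einstein exactly when $(\overline{\Ric})^{\circ}=0$, which by the displayed identity amounts to $\varphi\,(\Ric)^{\circ}+(n-2)\,(\nabla^2\varphi)^{\circ}=0$. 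This is the trace-free version of~\eqref{eq:conf-Einstein}, and~\eqref{eq:conf-Einstein} itself follows by adding the trace back, which is exactly the definition of $\theta$ (so that $n\theta$ is the $g$-trace of $\varphi\,\Ric+(n-2)\nabla^2\varphi$); for $n\ge 3$ the contracted Bianchi identity then forces the proportionality factor of an Einstein metric to be constant, so no separate constancy hypothesis is required. Thus the only genuine obstacle is the curvature contraction leading to the displayed transformation law; everything afterwards is formal.
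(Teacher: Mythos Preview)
Your proof is correct and follows essentially the same route as the paper: the paper records the same connection-difference formula (in terms of $\log\varphi$ rather than your $f=-\log\varphi$) and then simply cites \cite[1.159]{Be87} and \cite[8.27]{Ku02} for the resulting Ricci transformation, whereas you actually carry out the contraction and the conversion from $f$ to $\varphi$. The Einstein consequence is handled identically by passing to trace-free parts.
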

Equation \ref{eq:ric}
follows from the relationship between the two
Levi-Civita connections $\nabla,\overline{\nabla}$ associated 
with $g$ and $\overline{g}$:
$$\overline{\nabla}_XY - \nabla_XY = - X(\log \varphi ) Y - Y(\log \varphi ) X 
+ g(X,Y)\grad(\log \varphi).$$
Equation~\ref{eq:ric} is a standard formula,
see \cite[1.159]{Be87} or \cite[8.27]{Ku02}.
%Equation \ref{eq:lie-ric} can be found in \cite[p.160]{Ya57}.

\begin{cor}\label{conformalEinstein}
A metric $g$ on a manifold $M$ is (locally or globally) conformally Einstein
if and only if there is a (local or global) positive solution
$\varphi$ of the equation 
$\varphi \cdot (\Ric)^\circ + (n-2)\cdot (\nabla^2\varphi)^\circ =0.$
\end{cor}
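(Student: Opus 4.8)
The plan is to derive Corollary~\ref{conformalEinstein} directly from Lemma~\ref{lem:ric}, which has already been stated, so essentially nothing new needs to be proved; the corollary is merely a restatement of the equivalence established in the lemma, with the emphasis shifted onto the existence of the positive solution $\varphi$. First I would recall that, by definition, $g$ is (locally or globally) conformally Einstein precisely when there is a (local or global) smooth positive function $\varphi$ such that $\overline g = \varphi^{-2} g$ is an Einstein metric. By the final assertion of Lemma~\ref{lem:ric}, for a fixed positive $\varphi$ the metric $\overline g = \varphi^{-2}g$ is Einstein if and only if
\[
\varphi\cdot(\Ric)^{\circ} + (n-2)\cdot(\nabla^2\varphi)^{\circ} = 0 .
\]
Quantifying over $\varphi$ on both sides yields exactly the statement of the corollary.

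The one point that requires a word of justification is the passage between ``$\overline g$ is Einstein'' and the trace-free equation in the case $n=2$, since for $n=2$ the factor $(n-2)$ vanishes and ``Einstein'' means ``constant Gaussian curvature'' rather than $\Ric=\lambda g$. I would address this by noting that in dimension $2$ one always has $\Ric = K g$, so $(\Ric)^{\circ}\equiv 0$ identically, and the displayed equation is satisfied vacuously by every positive $\varphi$; correspondingly every surface metric is locally conformally flat and hence locally conformally Einstein, so both sides of the claimed equivalence are trivially true. For $n\geq 3$ the equivalence is precisely the content of Lemma~\ref{lem:ric} together with the fact (already recorded in the excerpt) that the Einstein constant $\lambda = S/n$ is automatically constant when $n\geq 3$, so no extra constancy condition on $\theta$ need be imposed.

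I would then simply remark that the local versus global dichotomy is handled uniformly: if one only asks for a conformal change on a neighborhood of each point, one seeks local solutions $\varphi$, while a global conformal factor corresponds to a global solution, and Lemma~\ref{lem:ric} is a pointwise identity, hence valid on any open set on which $\varphi$ is defined. This completes the argument.

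The main obstacle here is not mathematical depth but bookkeeping: one must be careful to phrase the equivalence so that it is literally correct in both the $n\geq 3$ and the $n=2$ regimes, and to make clear that the quantifier ``there exists $\varphi$'' is the same on both sides of the biconditional. Once that is made explicit, the corollary is immediate from Lemma~\ref{lem:ric}.
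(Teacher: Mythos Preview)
Your proposal is correct and matches the paper's approach: the paper gives no separate proof of this corollary, treating it as an immediate restatement of the last assertion of Lemma~\ref{lem:ric}. Your treatment is in fact more careful than the paper's, since you explicitly check the degenerate case $n=2$ (where the equation is vacuous and local conformal flatness makes the equivalence trivially true); the paper passes over this in silence.
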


%\begin{lem}
%A necessary condition for a manifold to be conformally Einstein is
%the integrability of the eigenspaces of the Ricci tensor.
%More precisely this holds in every part where the mutiplicities 
%of the eigenvalues are constant.
%\end{lem}

%Proof. 
%By the equation in Corollary~\ref{conformalEinstein} it is sufficient
%to show that the eigenspaces of the Hessian of $\varphi$ are
%integrable. This holds for any function by the following argument.
%If all multiplicities are one nothing has to be proved.
%Let us assume there is an open set with an eigenspace of
%higher multiplicity, and let $X,Y$ be two linearly independent
%vector fields in this eigen distribution, i.e. 
%$$\nabla_X\grad \varphi = \lambda X, \nabla_Y\grad \varphi = \lambda Y. $$
%We have to show that $\nabla_{[X,Y]}\grad \varphi = \lambda [X,Y]$.
%Let $A(X,Y) = \nabla_{[X,Y]}\grad \varphi - \lambda [X,Y]$
%denote the difference. Then we have the Ricci equation
%$$R(X,Y)\grad \varphi = \nabla_X(\lambda Y) - \nabla_Y (\lambda X)-
%\nabla_{[X,Y]}\grad \varphi = (X\lambda) Y - (Y\lambda) X - A(X,Y).$$
%Since the left hand side is a tensor field we can conclude
%that $A$ is a tensor field also, in particular
%$$A(fX,Y) = fA(X,Y)$$ for any scalar function $f$.
%But $$A(fX,Y) = \nabla_{[fX,Y]}\grad \varphi - \lambda [fX,Y]
%= fA(X,Y) - (Yf)(\nabla_X\grad \varphi - \lambda X).$$

\begin{exa}\rm (conformal cylinder, 
generalized Mercator projection)\label{Mercator}

Let $M_*$ be an $(n-1)$-dimensional Einstein space with
Einstein constant $\lambda_* = n-2$. Then the cylinder $M = \R \times M_*$
with the product metric $g = dt^2 + g_*$ is conformally Einstein:
The metric $\overline{g} = \cosh^{-2}t \cdot g$   
is Einstein with $\overline{\lambda} = n-1$.
If $M_*$ is the unit\linebreak $(n-1)$-sphere then $(M,g)$ is a cylinder
representing the (classical but $n$-dimensional) 
{\it Mercator projection} from the $n$-sphere without north and south pole. 
With $\varphi = \cosh t$ the equation in Corollary~\ref{conformalEinstein}
is satisfied by the block matrix structure
\[\Ric = \left( \begin{array}{cc}
0&0\\0&\Ric_* \end{array} \right) = \left( \begin{array}{cc}
0&0\\0&(n-2)g_* \end{array} \right), \quad \nabla^2 \varphi = \left( \begin{array}{cc}
\varphi''&0\\0&0\end{array} \right) = \left( \begin{array}{cc}
\varphi&0\\0&0\end{array} \right),\] 
\[(\Ric)^\circ = \left( \begin{array}{cc}
-\frac{(n-1)(n-2)}{n}&0\\0&\frac{n-2}{n}g_* \end{array} \right), \quad (\nabla^2 \varphi)^\circ 
=  \left( \begin{array}{cc} \frac{(n-1)\varphi}{n}&0\\0&-\frac{\varphi}{n}g_*\end{array} \right).\] 
In the special case of a compact Einstein space $M_*$ this generalized Mercator
projection is the result of \cite[Thm.2.1]{MO}, see
Corollary~\ref{Moroianu}.
The transition from the conformal cylinder 
$\overline{g} = \cosh^{-2}t\big(dt^2 + g_*\big)$ 
to the more familiar version $\overline{g} = dr^2 + \sin^2r \ g_*$ in parallel
coordinates around the equator with $r=\pi/2$ is achieved by
the parameter transformation $(-\infty,\infty) \ni t \mapsto r(t) \in (0,\pi)$ 
with $\sin r = \cosh^{-1} t$ and
$dr/dt = \cosh^{-1}t$ leading to the Gudermann function $r(t) = \int_{-\infty}^t
\cosh^{-1}\tau \ d\tau = 2 \arctan e^t$. 
However, $\overline{g}$ is not complete, as the classical Mercator
projection shows.

\medskip
Similarly, the metric $\overline{g} = \sin^{-2}t(dt^2 + g_*)$ is an Einstein
metric on the cylinder $\overline{M} = (0,\pi) \times M_*$
with $\lambda = -(n-1)$ if $(M_*,g_*)$ is Einstein
with $\lambda_* = -(n-2)$.
Moreover, if $(M_*,g_*)$ is the hyperbolic $(n-1)$-space then 
$(\overline{M},\overline{g})$ is isometric 
%%%%%%%%%%%%%%%% change with 
to the hyperbolic $n$-space
with the metric $\overline{g} = dr^ 2 + \cosh^2 r \ g_*, \ r \in \R$.
The transformation is given by $\sin t = \cosh^{-1} r$ and 
$t(r) = 2 \arctan e^r$.
One could call this a {\it hyperbolic Mercator projection}.
In particular, $\overline{g}$ is complete if $g_*$ is 
Riemannian and complete.

\end{exa}
\begin{cor} {\rm (Brinkmann \cite{Br25})}\label{Brinkmann}

If $g$ is an Einstein metric then $\overline{g}$ is also an Einstein
metric if and only if  $(\nabla^2\varphi)^\circ =0.$ 
\end{cor}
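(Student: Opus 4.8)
This is essentially immediate from Lemma~\ref{lem:ric}, so the proof will be short. The plan is to feed the hypothesis that $g$ is Einstein into the equivalent reformulation of the Einstein condition provided there (and restated in Corollary~\ref{conformalEinstein}).

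First I would recall that, by Lemma~\ref{lem:ric}, the metric $\overline{g} = \varphi^{-2}g$ is Einstein if and only if
\[
\varphi\cdot(\Ric)^\circ + (n-2)\cdot(\nabla^2\varphi)^\circ = 0 .
\]
Now, since $g$ is assumed Einstein, $\Ric = \lambda g$, so its trace-free part $(\Ric)^\circ$ vanishes identically. Hence the displayed equation reduces to $(n-2)\cdot(\nabla^2\varphi)^\circ = 0$. In dimension $n\ge 3$ the scalar $n-2$ is a nonzero constant, so this is equivalent to $(\nabla^2\varphi)^\circ = 0$, which is exactly what is claimed; the positivity of $\varphi$ is only needed to ensure that $\overline{g}$ is a genuine metric and plays no further role. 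Both implications are obtained simultaneously, since the reformulation in Lemma~\ref{lem:ric} is an equivalence.

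I expect no real obstacle here; the only point to watch is the dimension. For $n=2$ the factor $n-2$ disappears and the reduction says nothing, which is consistent with the fact that for surfaces ``Einstein'' means \emph{constant} Gaussian curvature and a conformal factor producing constant curvature need not satisfy $(\nabla^2\varphi)^\circ=0$ (for instance $g$ flat and $\log\varphi$ harmonic but non-affine). So the corollary is to be understood in dimension $n\ge 3$, where the one-line reduction above is complete.
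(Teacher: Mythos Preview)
Your proof is correct and follows exactly the paper's approach: the paper simply says ``This follows directly from Equation~\ref{eq:conf-Einstein},'' and you have spelled out that one-line deduction. Your added remark on the $n=2$ case is a useful clarification, but the core argument is the same.
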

This follows directly from Equation \ref{eq:conf-Einstein}.
This case of conformal changes between two Einstein metrics was studied 
by many authors, starting with Brinkmann \cite{Br25}.
The equation $(\nabla^2\varphi)^\circ =0$ can be explicitly solved, 
compare \cite{KR09}.
Roughly the results are the following:
As long as $g(\grad\varphi,\grad \varphi) \not = 0$, the metric is a warped
product $g = \epsilon dt^2 + (\varphi'(t))^2g_*$ with an
$(n-1)$-dimensional
Einstein space $(M_*,g_*)$, $\epsilon = \pm 1$, and where $\varphi$ depends
only on $t$ and satifies the following differential equations:
\begin{equation}\label{eq:Brinkmann}
\varphi''' + \epsilon k \varphi' = 0, \quad (\varphi'')^2 + \epsilon k
(\varphi')^2 = \epsilon k_*
\end{equation}
By the second equation the normalized scalar curvature $k_*$ of $g_*$
appears as a constant of integration for
the first equation. We can integrate the first equation also as
$\varphi'' + \epsilon k\varphi = c$ but this constant $c$ is not essential:
We can add freely a constant to the function $\varphi$ itself without
changing the equation. 
However, $c$ becomes essential if $\varphi$ becomes a conformal factor.
If $g(\grad \varphi,\grad \varphi) = 0$ on an open subset then we have
$\nabla^2\varphi = 0$ and ${\rm Ric} = \overline{\rm Ric} = 0$, 
see \cite[Thm.3.12]{KR09}.
\begin{lem}
For a real constant $c$ and a positive function $\varphi$
we have the following equations for the function $\varphi^c$:
\begin{eqnarray}
\label{eq:c-power}
\grad \varphi^c &=& c \varphi^{c-1}\grad \varphi \\
\nabla^2 \varphi^c &=& c \varphi^{c-1}\nabla^2 \varphi +
c(c-1)\varphi^{c-2} d\varphi \otimes d\varphi
\end{eqnarray}
\end{lem}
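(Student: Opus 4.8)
The plan is to obtain both formulas from the ordinary chain rule together with the Leibniz rule for the Levi-Civita connection; no special geometric input is needed, and the signature of $g$ plays no role, positivity of $\varphi$ entering only to guarantee that $\varphi^c$ is a well-defined smooth function. For the gradient identity, the chain rule for differentials gives $d(\varphi^c) = c\,\varphi^{c-1}\,d\varphi$; since $\grad$ is the $g$-dual of $d$ and the metric is non-degenerate, applying the musical isomorphism to both sides yields $\grad\varphi^c = c\,\varphi^{c-1}\grad\varphi$, which is the first equation.

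For the Hessian I would start from $\nabla^2\varphi^c(X,Y) = g\!\left(\nabla_X\grad\varphi^c,\,Y\right)$, insert the gradient formula just proved, and apply the Leibniz rule to the product of the scalar $c\,\varphi^{c-1}$ with the vector field $\grad\varphi$:
\[
\nabla_X\!\left(c\,\varphi^{c-1}\grad\varphi\right) \;=\; c\,X\!\left(\varphi^{c-1}\right)\grad\varphi \;+\; c\,\varphi^{c-1}\,\nabla_X\grad\varphi .
\]
Using $X(\varphi^{c-1}) = (c-1)\varphi^{c-2}\,d\varphi(X)$ and pairing with $Y$ via $g$ then gives
\[
\nabla^2\varphi^c(X,Y) \;=\; c(c-1)\varphi^{c-2}\,d\varphi(X)\,d\varphi(Y) \;+\; c\,\varphi^{c-1}\,\nabla^2\varphi(X,Y),
\]
which is precisely the asserted identity once one recognizes $d\varphi(X)\,d\varphi(Y) = (d\varphi\otimes d\varphi)(X,Y)$. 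Alternatively one may simply invoke the second-order chain rule $\nabla^2(h\circ\varphi) = (h'\circ\varphi)\,\nabla^2\varphi + (h''\circ\varphi)\,d\varphi\otimes d\varphi$ with $h(t) = t^c$, for which $h'(t) = c\,t^{c-1}$ and $h''(t) = c(c-1)\,t^{c-2}$.

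There is no genuine obstacle here: both steps are direct applications of the chain and product rules, and in fact the first formula is a special case of Equation~\ref{eq:c-power} that is used to derive the second. The only point requiring slight care is bookkeeping — differentiating $c\,\varphi^{c-1}\grad\varphi$ produces two terms, one from the scalar factor $\varphi^{c-1}$ and one from the vector field $\grad\varphi$, and it is the former that yields the $c(c-1)\varphi^{c-2}\,d\varphi\otimes d\varphi$ contribution.
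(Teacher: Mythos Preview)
Your proof is correct and follows essentially the same route as the paper: the chain rule $d(\varphi^c)=c\varphi^{c-1}d\varphi$ for the first identity, and then the Leibniz rule applied to $\nabla_X(c\varphi^{c-1}\grad\varphi)$ to split off the two terms for the Hessian. The paper's computation is line-for-line the same, just written at the level of $\nabla_X\grad\varphi^c$ rather than paired with $Y$.
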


{\it Proof.} The first equation is obvious from the chain rule
$d(\varphi^c) = c \varphi^{c-1}\cdot d\varphi$. 
For the second equation we calculate
\begin{eqnarray*}
\nabla_X \grad \varphi^c &=& \nabla_X (c \varphi^{c-1} \cdot \grad \varphi)\\
&=& c\varphi^{c-1}\cdot \nabla_X \grad
\varphi + cX(\varphi^{c-1}) \cdot \grad \varphi \\
&=&  c\varphi^{c-1}\cdot \nabla_X \grad
\varphi + c(c-1)\varphi^{c-2}\cdot d\varphi(X) \cdot \grad \varphi.
\end{eqnarray*}
\begin{cor} \label{quasi-Einstein}
If a function $\varphi > 0$ on an $n$-dimensional manifold 
$(M,g)$ satisfies the equation
$\varphi \cdot \Ric + (n-1) \cdot \nabla^2\varphi=0$
then with $c := \frac{n-1}{ n-2}$ the metric  $\overline{g} =
\varphi^{-2c}g$ 
satisfies the equation 
$$\frac{n-2}{n-1}\cdot \overline{\Ric} = \overline{\varphi}\cdot \overline{g}
+ \varphi^{-2}\cdot d\varphi \otimes d\varphi$$
for some scalar function $\overline{\varphi}$.
Therefore $\overline{g}$ is a quasi Einstein metric in the sense of \cite{CG}.
\end{cor}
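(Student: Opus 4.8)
The idea is to reduce the statement to the conformal transformation formula for the Ricci tensor from Lemma~\ref{lem:ric}. Set $\psi := \varphi^{c}$ with $c = \frac{n-1}{n-2}$; since $\varphi > 0$ this is a well-defined smooth positive function, and $\overline{g} = \varphi^{-2c}g = \psi^{-2}g$. Applying Equation~\ref{eq:ric} to the conformal change $g \mapsto \overline{g} = \psi^{-2}g$ gives
\[
\overline{\Ric} - \Ric = \psi^{-2}\Big((n-2)\,\psi\,\nabla^2\psi + \big[\psi\,\Delta\psi - (n-1)\,\|\grad\psi\|^2\big]g\Big),
\]
so everything comes down to rewriting the right-hand side in terms of $\varphi$ and then using the hypothesis.

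First I would insert the identities of the preceding Lemma: from $\nabla^2\varphi^{c} = c\varphi^{c-1}\nabla^2\varphi + c(c-1)\varphi^{c-2}\,d\varphi\otimes d\varphi$ we obtain
\[
\psi\,\nabla^2\psi = c\,\varphi^{2c-1}\nabla^2\varphi + c(c-1)\,\varphi^{2c-2}\,d\varphi\otimes d\varphi .
\]
The key algebraic point is that $c$ has been chosen precisely so that $(n-2)c = n-1$ and $(n-2)c(c-1) = (n-1)(c-1) = c$ (a one-line check, using $c - 1 = \frac{1}{n-2}$). Hence, after multiplying by $\psi^{-2} = \varphi^{-2c}$,
\[
\psi^{-2}\,(n-2)\,\psi\,\nabla^2\psi = (n-1)\,\varphi^{-1}\nabla^2\varphi + c\,\varphi^{-2}\,d\varphi\otimes d\varphi .
\]

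Now I would invoke the hypothesis in the form $(n-1)\,\varphi^{-1}\nabla^2\varphi = -\Ric$, which turns the previous line into $-\Ric + c\,\varphi^{-2}\,d\varphi\otimes d\varphi$. Substituting back into the transformation formula, the two copies of $\Ric$ cancel and one is left with
\[
\overline{\Ric} = c\,\varphi^{-2}\,d\varphi\otimes d\varphi + \psi^{-2}\big[\psi\,\Delta\psi - (n-1)\|\grad\psi\|^2\big]g .
\]
Since $\psi^{-2}g = \varphi^{-2c}g = \overline{g}$, the bracketed term is a scalar function times $\overline{g}$; dividing by $c$, equivalently multiplying by $\frac{n-2}{n-1}$, gives
\[
\frac{n-2}{n-1}\,\overline{\Ric} = \overline{\varphi}\cdot\overline{g} + \varphi^{-2}\,d\varphi\otimes d\varphi
\]
with $\overline{\varphi} := \frac{n-2}{n-1}\big[\psi\,\Delta\psi - (n-1)\|\grad\psi\|^2\big]$. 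This is the asserted relation, so $\overline{g}$ is quasi-Einstein in the sense of \cite{CG} by definition.

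I do not expect a genuine obstacle: once one notices that the exponent $c$ is tailored to make $(n-2)c$ match the coefficient $(n-1)$ in the hypothesis, the computation is essentially forced and the $\Ric$-terms cancel automatically. The only things needing care are the bookkeeping of the powers of $\varphi$ and the observation that $\|\grad\psi\|^2$ is the squared norm with respect to the background metric $g$ (not $\overline{g}$); since that term is absorbed into the scalar $\overline{\varphi}$, this does not affect the argument.
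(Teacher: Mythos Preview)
Your proposal is correct and follows precisely the route the paper intends: the corollary is stated without an explicit proof, but it is meant to be read off from Lemma~\ref{lem:ric} together with the Hessian identity $\nabla^2\varphi^c = c\varphi^{c-1}\nabla^2\varphi + c(c-1)\varphi^{c-2}\,d\varphi\otimes d\varphi$ from the preceding lemma, which is exactly what you do. The only thing you might add for polish is that your scalar $\overline\varphi$ can be rewritten entirely in terms of $\varphi$ via $\Delta\psi = c\varphi^{c-1}\Delta\varphi + c(c-1)\varphi^{c-2}\|\grad\varphi\|^2$ and $\|\grad\psi\|^2 = c^2\varphi^{2c-2}\|\grad\varphi\|^2$, but this is cosmetic and not needed for the statement as formulated.
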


%\pagebreak
\begin{exa}\rm
A quasi Einstein metric conformally equivalent with the
hyperbolic space.

We start with the hyperbolic space $(H^n,g_{-1})$ with the
metric in polar coordinates $g_{-1} = dr^2 + \sinh^2(r)g_1$
where $g_1$ is the metric on the unit $(n-1)$-sphere.
The function $\varphi(r) = \cosh(r)$ satisfies the equation
$\nabla^2\varphi = \varphi g_{-1}$, and $\Ric = -(n-1)g_{-1}$. 
Let $c := \frac{n-1}{n-2}$.
Then the metric
$$\overline{g} = \varphi^{-2c}g_{-1} =
\cosh^{-2(n-1)/(n-2)}(r)g_{-1}$$ 
is quasi Einstein 
by Corollary~\ref{quasi-Einstein} but not Einstein.
More precisely we have
$$\frac{n-2}{n-1}\cdot \overline{\Ric} = -n\tanh^2(r)\cdot g_{-1}
+ \cosh^{-2}(r)\cdot d\varphi \otimes d\varphi
%= -n\sinh^2(r)\cdot \overline{g}
%+ \cosh^{-2}\cdot d\varphi \otimes d\varphi
.$$
\end{exa}

\section{Conformally  Einstein products: Main Theorem}
\label{sec:Product} 
In the special case of an Einstein space that is conformally equivalent with
a Riemannian product of two manifolds several results
have been obtained, see \cite{Co00}, \cite{MO}, \cite{HPW},
\cite{Ta85}, \cite{TK}.
If the conformal factor depends only on one side
this is close to the case of an Einstein warped product which -- in general --
is not yet solved.
If the conformal factor depends on both sides of the product then
we can completely classify the solutions.
%We present a few examples at the end of this section,
%after a generalization of the main result in \cite{MO}.
Our Main Theorem~\ref{pseudoriem} studies precisely this case, thus 
generalizing and completing the main result of \cite{Cl08}.
Here the 4-dimensional case is special and leads to the class
of extremal metrics in the sense of E.Calabi \cite[18.4]{De00}.
We start with the simplest case of a function that depends only
on one real variable.
\begin{pro} \label{f(t)} {\rm (The type $\R \times M_*$ with
a 1-dimensional base)}

If $f$ is a non-constant function depending only on the real parameter $t$ then the
metric $\overline{g} = f^{-2}(\epsilon dt^2 + g_*)$ is Einstein 
if and only if $(M_*,g_*)$ is an $n$-dimensional Einstein space 
and $f$ satisfies the ODE \ $k_* f^2 - \epsilon (f')^2 = \overline{k}$.

\end{pro}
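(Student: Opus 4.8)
The plan is to view $\overline{g}=f^{-2}g$ as a conformal change of the pseudo-Riemannian product metric $g=\epsilon\,dt^{2}+g_{*}$ on the $(n+1)$-dimensional manifold $M=\R\times M_{*}$, with conformal factor $\varphi=f$ depending on $t$ alone, and then to invoke the characterization of Lemma~\ref{lem:ric} (applied in dimension $n+1$): $\overline{g}$ is Einstein if and only if
\[
f\cdot\Ric+(n-1)\cdot\nabla^{2}f=\theta\cdot g
\]
for some smooth function $\theta$ on $M$. So everything reduces to computing $\Ric$, $\nabla^{2}f$, and the block form of this tensor identity.

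First I would record the standard facts for a product: $\Ric$ is block-diagonal, vanishing on the $\partial_{t}$-line and on mixed terms and equal to $\Ric_{*}$ on $TM_{*}$; and, because $f$ depends only on $t$, one has $\grad f=\epsilon f'\partial_{t}$, hence $\|\grad f\|^{2}=\epsilon (f')^{2}$ and $\Delta f=\epsilon f''$, while all Christoffel symbols mixing $t$ with the $M_{*}$-directions vanish, so that $\nabla^{2}f=f''\,dt\otimes dt$. Substituting and splitting the displayed equation into blocks, the mixed block is automatic, the $(\partial_{t},\partial_{t})$-block reads $(n-1)f''=\epsilon\,\theta$, and the $TM_{*}\times TM_{*}$-block reads $f\cdot\Ric_{*}=\theta\cdot g_{*}$.

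The key step is to exploit that $\theta$ is a priori a function on the whole product. The first block equation shows $\theta=\epsilon(n-1)f''$ depends on $t$ only; then the second equation reads $\Ric_{*}=(\theta/f)\,g_{*}$, whose left-hand side at a fixed point of $M_{*}$ does not depend on $t$ and whose tensor $g_{*}$ is nondegenerate, so $\theta/f$ must be a constant, say $(n-1)k_{*}$. Hence $\Ric_{*}=(n-1)k_{*}\,g_{*}$, i.e. $(M_{*},g_{*})$ is Einstein with normalized scalar curvature $k_{*}$ (for $n=2$ this is precisely the statement that the Gaussian curvature is constant, in the convention of Section~\ref{sec:basic}). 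Putting $\theta=(n-1)k_{*}f$ back into the $(\partial_{t},\partial_{t})$-block yields the linear ODE $f''=\epsilon k_{*}f$; multiplying by $2f'$ and integrating once gives $(f')^{2}=\epsilon k_{*}f^{2}+C$, that is $k_{*}f^{2}-\epsilon (f')^{2}=-\epsilon C$, a constant which a short computation identifies with the normalized Einstein constant $\overline{k}$ of $\overline{g}$.

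For the converse I would simply reverse these steps: assuming $(M_{*},g_{*})$ Einstein with $\Ric_{*}=(n-1)k_{*}g_{*}$ and $f$ a non-constant solution of $k_{*}f^{2}-\epsilon (f')^{2}=\overline{k}$, differentiation gives $2f'(k_{*}f-\epsilon f'')=0$, hence $f''=\epsilon k_{*}f$ on the open set $\{f'\neq0\}$, which is dense since $f$ is non-constant, and therefore everywhere by continuity; then $\theta:=(n-1)k_{*}f$ satisfies all three block equations, so $f\cdot\Ric+(n-1)\cdot\nabla^{2}f=\theta\cdot g$, and Lemma~\ref{lem:ric} yields that $\overline{g}$ is Einstein. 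The only point requiring real care is the rigidity step forcing $\theta/f$ to be constant — equivalently, forcing the factor $(M_{*},g_{*})$ to be Einstein rather than merely having $\Ric_{*}$ pointwise proportional to $g_{*}$; the rest is the routine passage from a second-order linear ODE to its first integral.
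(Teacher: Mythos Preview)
Your argument is correct and follows essentially the same route as the paper: both compute the block form of $\Ric$ and $\nabla^{2}f$ on the product, deduce that $\Ric_{*}$ is a scalar multiple of $g_{*}$ with a coefficient forced to be constant, and then pass from $f''=\epsilon k_{*}f$ to its first integral $k_{*}f^{2}-\epsilon (f')^{2}=\overline{k}$. The only cosmetic difference is that you invoke the criterion $f\,\Ric+(n-1)\nabla^{2}f=\theta g$ from Lemma~\ref{lem:ric}, whereas the paper writes out $\overline{\Ric}$ via Equation~\ref{eq:ric} directly and keeps $\overline{\lambda}$ explicit throughout.
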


{\it Proof.}
We regard $f$ also as a function on the product with $M_*$.
Then for $g = \epsilon dt^2 + g_*$ we have the block matrix structure
\[\Ric = \left( \begin{array}{cc}
0&0\\0&\Ric_* \end{array} \right), \quad \nabla^2 f = \epsilon f''
(\epsilon dt^2) = \left( \begin{array}{cc}
\epsilon f''&0\\0&0\end{array} \right)\] 
and $\Delta f = \epsilon f'', \ ||\grad f||^2 = \epsilon (f')^2$. This implies
$$\overline{\Ric} = \Ric + (n-1)f^{-1}\nabla^2 f + 
\big[\epsilon f^{-1}f'' - \epsilon n f^{-2}(f')^2\big] \cdot (\epsilon dt^2 + g_*).$$
From the block matrix structure it follows that 
$\overline{g}$ is Einstein if and only if two
equations are satisfied:
The first one is the scalar equation
$$\overline{\lambda} = \epsilon n \big[ ff''- f'^2\big],$$
the second one is the tensor equation
$$ \overline{\lambda} g_* = f^2\Ric_* + \epsilon \big[ff''- n f'^2\big]g_*$$
where $\overline{\lambda}$ is the Einstein constant of $\overline{g}$.
A necessary condition is that $g_*$ is also Einstein with ${\rm Ric}_*
= \lambda_*g_*$ where $\lambda_*$ depends only on $t$ and
$\overline{\lambda}$. Therefore $\lambda_*$ is constant even if $n=2$.
In the Riemannian case $\epsilon = 1$ we obtain
$n(ff''-f'^2) = \overline{\lambda} = f^2\lambda_* + ff'' - n f'^2$ and therefore
$$f'' = \frac{\lambda_*}{n}f = k_*f.$$
Here the coefficients $k_* = \lambda_*/(n-1)$ and 
$\overline{k}=\overline{\lambda}/n$
are the normalized scalar curvatures of $g_*$ and $\overline{g}$, respectively.

\medskip
In the general case we get \ 
$\epsilon n \big[ ff''- f'^2\big] = \overline{\lambda} = 
f^2\lambda_* + \epsilon \big[ ff'' - n f'^2\big]$ and therefore
$$f'' = \epsilon k_*f.$$

We obtain the explicit relationship between $k_*$ and $\overline{k}$
by inserting the last equation
%%%%%%%%%%%%%%%%%change
into the preceding one
$$k_*f^2 - \epsilon f'^2 =  \overline{k}.$$
%By differentiating we obtain the equation
%$$f'' = \epsilon k_* f,$$ unless $f$ is constant,
%and consequently
%$$ff''-\epsilon f'^2 = \overline{k}.$$
We remark that an equation of the type $y'' = F(y,y')$
can be integrated in two steps, see \cite[\S 11.VI]{Wa}.
Here $k_*$ appears as one constant of integration.
Up to scaling we have to consider only the cases $\overline{k} =
1,0,-1$. The case $k_* = 0$ leads to linear solutions by $f'' = 0$.
Furthermore the cases of $k_* = \pm 1$
are particularly simple to handle. They lead to the standard equation
$(f')^2 \pm f^2 = 0$ or $(f')^2 \pm f^2 = \pm 1$
with all possible combinations of signs.

\medskip
In the Riemannian case $\epsilon = 1$ and with the initial 
conditions $f(0) = 1, f'(0) = 0$ we have the following solutions:

\smallskip
$f(t) = \cosh t$ if $\overline{k} = 1, k_* = 1$ (this
case corresponds to \cite[Thm.2.1]{MO}, compare
the Mercator projection in Example~\ref{Mercator}),

$f(t) = \cos t$ if $\overline{k} = -1, k_* = -1$ 
(compare Example~\ref{Mercator}),

$f(t) = 1$ if $\overline{k} = 0, k_* = 0$.

\medskip
With the initial conditions $f(0) = 1, f'(0) = 1$ we have
the solutions:

\smallskip
$f(t) = e^t$ if $\overline{k} = 0, k_* = 1$ (this case coincides
with the flat metric
$dr^2 + r^2 g_*$ in polar coordinates with $r = e^{-t}$ 
if $M_*$ is the unit sphere)

$f(t) = t+1$ if $\overline{k} = -1, k_* = 0$,

$f(t) = \frac{1}{2\sqrt{2}}\big[(\sqrt{2}+1)e^{\sqrt{2}\ t}
+ (\sqrt{2}-1)e^{-\sqrt{2}\ t}\big]$ if
$\overline{k} =  1, k_* = 2$.

In the last case the initial condition implies $k_* = 2$
leading to the equation $f'' = 2f$. This explains the factor $\sqrt{2}$.

\medskip
With the initial conditions $f(0) = 0, f'(0) = 1$ we have
the solutions:

$f(t) = \sinh t$ if $\overline{k} = -1, k_* = 1$,

$f(t) = \sin t$ if $\overline{k} = -1, k_* = -1$ 
(compare Example~\ref{Mercator}),

$f(t) = t$ if $\overline{k} = -1, k_* = 0$ (this is known
as the Poincar\'{e} halfspace model for the hyperbolic space if $g_*$
is flat).

\medskip
All these solutions define admissible metrics only outside the zeros of $f$.
The missing solution with $f(0) = f'(0) = 0$ is trivial
and occurs only for $\overline{k} = 0.$ 
It does not lead to a metric $\overline{g}$. \hfill $\Box$

\medskip
{\sc Remark:} In principle this classification is similar to that of
Einstein metrics as warped products $dt^2 + \varphi'^2(t)g_*$
in Corollary~\ref{Brinkmann}.
The differential equations are similar, and so are the solutions.
Nevertheless, from the global point of view, there are
essential differences.

\bigskip
In more generality we have the following theorem on products
of pseudo-Riemannian manifolds of arbitrary dimensions.
In the Riemannian case it is stated in \cite{Cl08}
but the 4-dimensional case of a product of two surfaces with
nonconstant curvature is missing there.
Therefore Theorem 1 in \cite{Cl08} is not literally true in dimension four.
\begin{thm} {\rm (Main Theorem on conformally Einstein products)}
\label{pseudoriem}

Let $(M^n,\widetilde{g})$ and $(M_*^{n_*},g_*)$ be pseudo-Riemannian
manifolds
with $n +n_* \geq 3$.
If $f(y,x)$ is a non-constant function depending on $y\in M$ 
and $x \in M_*$ 
and if the metric $\overline{g} = f^{-2}(\widetilde{g} + g_*)$ on $M \times M_*$
is Einstein 
then one of the following cases occurs:

\begin{enumerate}
\item $\overline{g}$ is a warped product, i.e., $f$ depends only on 
one of the factors $M$ or $M_*$. Moreover the fibre is an Einstein space.
%$t$ (the case discussed in Proposition~\ref{f(t)}) 
%\item $f$ depends only on $x$ (the case discussed in Corollary~\ref{conf-produc%t})
\item $f(y,x) = a(y) + b(x)$ with non-constant $a$ and non-constant $b$,
and both $(M,g)$ and $(M_*,g_*)$ are Einstein spaces
with normalized scalar curvatures $\widetilde{k},k_*$, 
and $a$ satisfies the equation 
$\widetilde{\nabla}^2a = \frac{\widetilde{\Delta} a}{n}\widetilde{g}$
%$\widetilde{\nabla}^2a = (-\widetilde{k}a}+c)g$
and, simultaneously, 
$b$ satisfies the equation 
${\nabla}_*^2b = \frac{{\Delta}_* b}{n_*}g_*$.

\noindent
If $n\geq 3$ or $n_*\geq 3$ then we have necessarily
$\widetilde{\nabla}^2a = (-\widetilde{k}a+c)g$
and, simultaneously, $\nabla_*^2b = (-k_*b + c)g_*$
with a constant $c$ and with $\widetilde{k} = -k_*$.
Such Einstein spaces can be (locally and globally) classified
\cite{KR09}.

\noindent
If $n=n_*=2$ then either the Gaussian curvatures are constant
and satisfy $\widetilde{K} = -K_*$, or they satisfy the equations
$\widetilde\nabla^2K = \frac{\Delta \widetilde K}{2}\widetilde{g}$
and $\nabla_*^2K_* = \frac{\Delta K_*}{2}g_*$. Such metrics
are called extremal in \cite[18.4]{De00}.
\end{enumerate}
Conversely, any Einstein warped product in $(1)$ is conformally equivalent
with a product space, and any two Einstein metrics $\widetilde{g},g_*$
with constant $\widetilde{k} = -k_*$ and with solutions $a(y),b(x)$
of the equations 
$\widetilde{\nabla}^2a = (-\widetilde{k}a+c)g$ and
$\nabla_*^2b = (-k_*b + c)g_*$ lead to an Einstein metric
$\overline{g} = (a+b)^{-2}(\widetilde{g}+g_*)$ on $M\times M_*$
in $(2)$.

\smallskip
If $n=n_*=2$ then there are also examples $M\times M_*$ with two surfaces
$M,M_*$ that are not of constant curvature, 
see Example~\ref{examplesurfaces} for the details.
However, by Corollary~\ref{compact}
%a theorem of Calabi \cite[Thm.18.14]{De00}
there are no compact examples that are (locally) conformally Einstein.
\end{thm}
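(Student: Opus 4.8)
The plan is to analyze the consequences of Equation~\ref{eq:conf-Einstein} for the metric $g = \widetilde g + g_*$ with conformal factor $\varphi = f(y,x)$, using the block structure coming from the product. First I would write out the Ricci tensor of the product as the block diagonal $\mathrm{Ric} = \mathrm{diag}(\widetilde{\mathrm{Ric}}, \mathrm{Ric}_*)$ and likewise split the Hessian of $f$: the "mixed" block $\widetilde\nabla\nabla_* f$ is just the second partial derivatives $\partial^2 f/\partial y^i\partial x^j$, since for a product metric the Christoffel symbols never mix the two sets of coordinates. Equation~\ref{eq:conf-Einstein} reads $f\,\mathrm{Ric} + (N-2)\nabla^2 f = \theta g$ with $N = n + n_*$. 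The off-diagonal block of this equation forces $\partial^2 f/\partial y^i\partial x^j = 0$ for all $i,j$, which by connectedness yields $f(y,x) = a(y) + b(x)$. If $a$ or $b$ is constant we are in case~(1), a warped product, and the "fibre is Einstein" assertion follows from Proposition~\ref{f(t)}-type block analysis (the tensor block over the fibre must be pure trace). So assume both $a,b$ are non-constant; this is case~(2).

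Next, with $f = a(y)+b(x)$, the $M$-block of Equation~\ref{eq:conf-Einstein} becomes $(a+b)\widetilde{\mathrm{Ric}} + (N-2)\widetilde\nabla^2 a = \theta\,\widetilde g$, and the $M_*$-block becomes $(a+b)\mathrm{Ric}_* + (N-2)\nabla_*^2 b = \theta\,g_*$. Taking trace-free parts (with respect to $\widetilde g$ on the first block) kills the $\theta$ term and gives $(a+b)(\widetilde{\mathrm{Ric}})^\circ + (N-2)(\widetilde\nabla^2 a)^\circ = 0$; differentiating in an $M_*$-direction (so $\partial_{x^j}(a+b) = \partial_{x^j}b$, and the $M$-tensors are independent of $x$) shows $(\partial_{x^j}b)\cdot(\widetilde{\mathrm{Ric}})^\circ = 0$. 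Since $b$ is non-constant, $(\widetilde{\mathrm{Ric}})^\circ \equiv 0$, i.e.\ $(M,\widetilde g)$ is Einstein, and then $(\widetilde\nabla^2 a)^\circ = 0$, i.e.\ $\widetilde\nabla^2 a = \frac{\widetilde\Delta a}{n}\widetilde g$; symmetrically $(M_*,g_*)$ is Einstein and $\nabla_*^2 b = \frac{\Delta_* b}{n_*}g_*$. These are exactly the conformal-gradient-field equations~\ref{eq:hess} on each factor.

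Now I would extract the relation between the normalized scalar curvatures and the precise form of the Hessians. Substitute $\widetilde{\mathrm{Ric}} = \widetilde\lambda\,\widetilde g = (n-1)\widetilde k\,\widetilde g$ and $\widetilde\nabla^2 a = \frac{\widetilde\Delta a}{n}\widetilde g$ back into the $M$-block: $(a+b)(n-1)\widetilde k + (N-2)\frac{\widetilde\Delta a}{n} = \theta$, and the analogous scalar identity from the $M_*$-block. For $n\geq 3$ the classical Bochner-type identity (differentiate $\widetilde\nabla^2 a = h\,\widetilde g$ and use $\nabla\mathrm{Ric}$ for an Einstein metric, as in \cite{KR09}) shows $h = \frac{\widetilde\Delta a}{n}$ must be of the form $-\widetilde k\, a + c$ for a constant $c$; comparing the two scalar identities across the factors (the $b$-dependence on the $M$-side must match the $a$-dependence on the $M_*$-side, since $\theta$ is one function on the product) pins down $\widetilde k = -k_*$ and a common constant $c$. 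When $n = n_* = 2$ there is no constraint forcing $\widetilde\nabla^2 K$ to be linear in $K$; one is left either with both Gaussian curvatures constant (and $\widetilde K = -K_*$) or with the extremal-metric equations $\widetilde\nabla^2\widetilde K = \frac{\Delta\widetilde K}{2}\widetilde g$, $\nabla_*^2 K_* = \frac{\Delta K_*}{2}g_*$ — here one uses that in dimension~2 the conformal factor $f$ satisfying $(\nabla^2 f)^\circ = -\frac{f}{N-2}(\mathrm{Ric})^\circ$ together with the surface identity relating $(\mathrm{Ric})^\circ$ to the Hessian of $K$ forces $a$ to be an affine function of $\widetilde K$. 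Finally, the converse direction is a direct verification: plugging $f = a+b$ with the stated Hessian equations into Equation~\ref{eq:conf-Einstein} and checking that $\theta$ comes out consistent on both blocks.

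The main obstacle I expect is the dimension-2 case: there the trace-free Hessian condition does not by itself integrate to a linear relation, so one must bring in the special surface identity expressing $(\mathrm{Ric})^\circ$ (equivalently $(\nabla^2 K)^\circ$ up to a factor) and argue carefully that a non-constant $f=a+b$ satisfying the coupled system forces each of $a,b$ to be affine in the respective Gaussian curvature — this is precisely where Calabi's extremal metrics enter, and it is the point at which \cite{Cl08} is incomplete. Everything else is block-matrix bookkeeping plus the already-cited classification of Einstein spaces with a conformal gradient field from \cite{KR09}.
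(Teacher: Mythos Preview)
Your approach is essentially the paper's: block-diagonalize $\Ric$ and $\nabla^2 f$, kill the mixed block to get $f=a+b$, then show each factor is Einstein with a conformal gradient field. Your route to $(\widetilde\Ric)^\circ=0$ is in fact slightly slicker than the paper's --- you take the trace-free part of the $M$-block first and then differentiate once in an $M_*$-direction, whereas the paper differentiates the full block equation twice before reaching the same conclusion. For the relation $\widetilde k=-k_*$, the common constant $c$, and the converse verification, the two arguments coincide in substance.

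The one soft spot is your $n=n_*=2$ sketch. The ``surface identity relating $(\Ric)^\circ$ to the Hessian of $K$'' is not something you can invoke off the shelf: on each $2$-dimensional factor $(\Ric)^\circ$ vanishes identically, so the content has to come from the \emph{scalar} equations, not the trace-free ones. The paper proceeds concretely: since $(\widetilde\nabla^2 a)^\circ=0$ already forces $\widetilde g=\epsilon_1\,dt^2 \pm a'^2\,dx^2$ (and similarly for $g_*$), one writes the two block equations as scalar identities in $t$ and $s$, subtracts them to obtain
\[
(a+b)(K-K_*)+2(\epsilon_1 a''-\epsilon_2 b'')=0,
\]
and then differentiates in $t$ and $s$ to separate variables, yielding $K'/a'=K_*'/b'=c$ for a constant $c$. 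That separation-of-variables step is what makes $a$ affine in $K$ (and $b$ affine in $K_*$); it is an ODE argument, not a tensor identity. You correctly flag this as the obstacle, but the concrete mechanism you would need is this explicit subtraction-and-differentiation, after which the equations $ca^2+2\epsilon_1 a''=d=cb^2+2\epsilon_2 b''$ drop out.
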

{\sc Remark:}
A complete classification of Einstein warped products
is not known, compare \cite{Be87}, \cite{HPW}.
However, Einstein warped products with a 1-dimensional base are
easy to classify by the equations in Corollary~\ref{Brinkmann}, 
see \cite{Ku88}, \cite{KR97}, \cite{KR09}.
For the case of a 2-dimensional base see \cite[Thm.9.119]{Be87}.
The similar equations $\widetilde{\nabla}^2f = -\widetilde{k}f\widetilde{g}$ and
$\nabla_*^2f = \widetilde{k}fg_*$ hold for the divergence $f$
of a non-isometric conformal vector field on a complete non-flat
Riemannian product $M \times M_*$, see \cite[Thm.5]{Ta65}, \cite{TM67}.
The crucial condition $\widetilde{k} = -k_*$ 
occurs also in the Fefferman-Graham
ambient metric construction on $M\times M_* \times [0,\varepsilon)$,
see \cite{GL09}.

\bigskip
{\it Proof.}
Let $g = \widetilde{g} + g_*$ and $N = n+n_*-1 \geq 2$. 
Then by Equation~\ref{eq:ric}
$$f^2\big(\overline{\Ric} - \Ric\big) = (N-1) f \cdot 
\nabla^2 f + \Big[f \cdot \Delta f - N\cdot 
\|\grad f\|^2 \Big]\cdot g \,.$$ 
%%%%%%%%%%%%%%%%change%%%%%%%%%%%%%%%%%%%%%%%%%%%%%

The Einstein condition for $\overline{g}$ 
implies that $\nabla^2 f$ admits an orthogonal decomposition
into some tensor on $M$ and another tensor on $M_*$.
%has the form
%\[ \nabla^2 f = \left( \begin{array}{cc}
%\frac{\partial^2f}{\partial t^2}&0\\0&*\end{array} \right)\] 
%with some $(0,2)$-tensor on $M_*$ denoted by $*$.
In coordinates $y_1, \ldots y_n, x_1, \ldots, x_{n_*}$ on $M \times
M_*$ this implies
$\frac{\partial^2f}{dy_jdx_i} = 0$ for any $i,j$.
Therefore $f$ splits as
$$f(y,x) = a(y) + b(x)$$ 
with functions $a$ on $M$ and $b$ of $M_*$.
This implies
\[ \nabla^2 f = \left( \begin{array}{cc}
\widetilde{\nabla}^2a &0\\0&\nabla_*^2 b\end{array} \right)\] 
and 
$$||\grad f||^2 =  ||\widetilde{\grad} a||^2 + ||\grad_* b||^2, 
\quad  \Delta f = \widetilde{\Delta} a + \Delta_* b.$$

\medskip
Therefore with $\overline{\Ric} = \overline{\lambda}\overline{g}$
Equation~\ref{eq:ric} can be written in block matrix form as
%\[
\begin{equation} \label{sternstern}
\overline{\lambda}\left( \begin{array}{cc}
\widetilde{g}&0\\0&g_*\end{array} \right) -  f^2\left( \begin{array}{cc}
\widetilde{\Ric} &0\\0&\Ric_*\end{array} \right)
=  (N-1)f\left( \begin{array}{cc}
\widetilde{\nabla}^2a &0\\0&\nabla_*^2 b\end{array} \right)
 + \Big[f \cdot \Delta f - N \cdot 
\|\grad f\|^2 \Big]\cdot  \left( \begin{array}{cc}
\widetilde{g}&0\\0&g_*\end{array} \right).
\end{equation}
%\]
From this equation it is obvious
that a constant function $a$ implies that $\widetilde{g}$ is Einstein 
and a constant function $b$
implies that $g_*$ is Einstein.
In each of these cases $\overline{g}$ is a warped product metric
with an Einstein fibre. This is case (1) in the theorem. 

\medskip
If $a$ and $b$ are both non-constant we can pick tangent vectors
$Y$ on $M$ and $X$ on $M_*$ such that $\nabla_Ya \not = 0$ 
and $\nabla_Xb \not = 0$. 
Then we consider the covariant derivatives $\nabla_X$ and $\nabla_Y$
of the last equation    
with respect to $g$. By the product decomposition we have
$\nabla_X\widetilde{g} = \nabla_Yg_* = 0$ and $\nabla_Xa = \nabla_Yb =
0$.
If $N\geq 2$ then $\overline{\lambda}$ is constant.
Using this and $f = a+b$ the results are 
\[0 = 2f \nabla_Xb \cdot \widetilde{\Ric}
+ (N-1)\nabla_Xb\cdot \widetilde{\nabla}^2a 
 + \Big[\nabla_Xb \cdot \Delta f + f\nabla_X\Delta_*b- N\cdot 
\nabla_X\|\grad f\|^2 \Big]\cdot  
\widetilde{g},\]
\[0 = 2f \nabla_Ya \cdot {\Ric_*}
+ (N-1)\nabla_Ya\cdot {\nabla}_*^2b 
 + \Big[\nabla_Ya \cdot \Delta f + f\nabla_Y\widetilde{\Delta} a - N\cdot 
\nabla_Y\|\grad f\|^2 \Big]\cdot g_*\]
and, consequently,
\[0 = 2f \cdot \widetilde{\Ric}
+ (N-1)\cdot \widetilde{\nabla}^2a 
 + \Big[\Delta f + (\nabla_Xb)^{-1}\Big(f\nabla_X\Delta_*b- N\cdot 
\nabla_X\|\grad f\|^2\Big) \Big]\cdot  
\widetilde{g},\]
\[0 = 2f \cdot {\Ric_*}
+ (N-1)\cdot {\nabla}_*^2b 
 + \Big[\Delta f + (\nabla_Ya)^{-1}\Big(f\nabla_Y\widetilde{\Delta} a - N\cdot 
\nabla_Y\|\grad f\|^2\Big) \Big]\cdot g_*.\]
Differentiating once more and using $\nabla_X\widetilde{\nabla}^2a = 
\nabla_Y{\nabla}_*^2b = 0$ we obtain
\[0 = 2\nabla_Xf \cdot  \widetilde{\Ric} 
 + \nabla_X\Big[\Delta f + (\nabla_Xb)^{-1}\Big(f\nabla_X\Delta_*b- N\cdot 
\nabla_X\|\grad f\|^2\Big) \Big]\cdot  
\widetilde{g},\]
\[0 = 2\nabla_Yf \cdot  {\Ric_*} 
 + \nabla_Y\Big[\Delta f + (\nabla_Ya)^{-1}\Big(f\nabla_Y\widetilde{\Delta}a- N\cdot 
\nabla_Y\|\grad f\|^2\Big) \Big]\cdot g_*.\]
This implies that $\widetilde{g}$ and $g_*$ are Einstein metrics.
From the previous equations we get that in addition
$\widetilde{\nabla}^2a$ is a scalar multiple of $\widetilde{g}$
and that $\nabla_*^2b$ is a scalar multiple of $g_*$.
This is precisely the equation in Corollary~\ref{Brinkmann}.
On an Einstein space this equation can be completely and
explicitly solved, see Corollary~\ref{Brinkmann} and, in more detail,
\cite{Be87}, \cite{Ta65}, \cite{Ku88}, \cite{KR09}.
Moreover it follows that we have Einstein constants 
$\widetilde{\lambda}$ and $\lambda_*$ whenever $n \geq 3$ and $n_* \geq 3$.
If $n=2$ or $n_* = 2$ no conclusion about the constancy of the
curvature is possible.
However, if the gradient of $a$ or $b$ is isotropic, then it follows
$\widetilde{\Delta} a = 0$ and $\widetilde{k}=0$ or $\Delta_* b= 0$
and $k_* = 0$,
respectively. So in the sequel we can assume that the gradients
are either spacelike or timelike with $\epsilon_1 = \sign (||\grad a||^2), \epsilon_2 =
\sign(||\grad b||^2)$.

\medskip
Now let $n\geq 3$ and $n_*\geq 3$.
Then it is well known \cite{Ku88}, \cite{KR09} that 
$\widetilde{\nabla}^2a = (-\widetilde{k}a+c)\widetilde{g}$
and simultaneously $\nabla_*^2b = (-k_*b + c_*)g_*$
with constants $c,c_*$. 
Recall $\widetilde{\Delta} a = \epsilon_1na'', \Delta_*b =
\epsilon_2n_*b''$ and $a'''+\epsilon_2\widetilde{k}a' = 0 =
b'''+\epsilon_2k_*b'$
from Corollary~\ref{Brinkmann}.
We still have to prove $\widetilde{k} = -k_*$
and $c=c_*$.
Since $\widetilde{\rm Ric} = (n-1)\widetilde{k}\widetilde{g}$ and 
${\rm Ric}_* = (n_*-1)k_*g_*$ we consider Equation~\ref{sternstern}
and observe that the trace-free part has to vanish. This leads to the equation
$$(a+b)\big((n-1)\widetilde{k} - (n_*-1)k_*\big) +
(n+n_*-2)(-\widetilde{k}a+c+k_*b-c_*) = 0.$$
Since $a$ depends only on $y \in M$ and $b$ depends only
on $x \in M_*$ we conclude that $k_* = -\widetilde{k}$ and, by using
this equation, $c=c_*$.
The same is true if $n=2$ and $n_* \geq 3$ because a constant
scalar curvature on one of the factors implies that the scalar
curvature on the other factor is also constant.

\medskip
If $n=1$ or $n_* = 1$ the same holds, see Corollary~\ref{f(t,x)} below.

\medskip
If $n=n_*=2$ then we have $\nabla^2a = \epsilon_1a''\widetilde{g}$ and 
$\nabla_*^2b = \epsilon_2b''g_*$ where $(\ )'$ denotes differentiation 
%by
%%%%%%%%%%%%%%%%%%%%%%%%%%%%%%%%%%% changes
with respect to the arc length 
parameter $t$ resp.\ $s$ on the trajectories 
of the gradients of $a,b$, compare Corollary~\ref{Brinkmann}. 
The metrics are
$\widetilde{g} = \epsilon_1dt^2 \pm a'^2dx^2, g_* = \epsilon_2ds^2 \pm b'^2dy^2$.
All geometric quantities depend only on $a',b'$, not on an additive
constant
of $a$ or $b$, e.g.\ we have the Gaussian curvatures
$K = -\epsilon_1a'''/a', K_* = -\epsilon_2b'''/b'$.
Then Equation~\ref{sternstern} reads as follows:
$$\overline{\lambda} = (a+b)^2K + 2(a+b)\epsilon_1a'' + (a+b)(2\epsilon_1a''+2\epsilon_2b'') - 3(\epsilon_1a'^2+\epsilon_2b'^2)$$ 
$$\overline{\lambda} = (a+b)^2K_* + 2(a+b)\epsilon_2b'' + (a+b)(2\epsilon_1a''+2\epsilon_2b'') -
3(\epsilon_1a'^2+\epsilon_2b'^2).$$
In particular we have
\begin{equation}\label{eq:2x2}
0 = (a+b)(K-K_*)+ 2(\epsilon_1a''-\epsilon_2b'') = aK +2\epsilon_1a'' - (bK_* + 2\epsilon_2b'') + bK-aK_*
\end{equation}
where the first summand depends on $t$ only, the second one on $s$
only, and the third depends on both simultaneously.
By differentiating 
%by 
%%%%%%%%%%%%%%%%%%%%%%%%%%%%%changes
%%%%%%%%%%%%%%%%%%%%%%%%%%%%%%%%%%%%%
$t$ and $s$ we see that $b'K'-a'K_*' = 0$, 
hence $K'/a' = K_*'/b' = c$ is constant. Only the case $c \not = 0$ is
interesting here.
%Up to scaling we can assume that this constant is 1, i.e. $a=K, b=K_*$.
By adding some constant to $a$ and $b$ we can assume that $K = ca, K_*= cb$.
Therefore from Equation~\ref{eq:2x2} we obtain
%Since $K = -a'''/a', K_* = -b'''/b'$ is a standard formula for
%rotationally symmetric metrics, we obtain the eqations
$ca^2+2\epsilon_1a'' = d$ and $cb^2+2\epsilon_2b'' = d$ with a constant $d$.
These equations have non-constant solutions, see
Example~\ref{examplesurfaces} (4).

\medskip
Conversely, if $k_* = -\widetilde{k}$ is constant 
and if $a,b$ satisfy $\widetilde{\nabla}^2a = (-\widetilde{k}a+c)g$
and $\nabla_*^2b = (-k_*b + c)g_*$
%$a''+\epsilon_1\widetilde{k}a = c =b''+\epsilon_2k_*b$ 
then we see that Equation~\ref{sternstern} is satisfied because the constant coefficients of $a$ 
and $b$ vanish. This part holds also if $n=2$, $n_* \geq 3$ because
in this case the equations imply that the 2-manifold is of
constant curvature. It also holds if $n=n_*=2$ and if
the two curvatures are constant.
The Einstein constant $\overline{\lambda}$ depends on the
constant $\widetilde{k}=-k_*$ and the constants of integration
$c,c_1,c_2,d_1,d_2$ and $\epsilon_1,\epsilon_2$ from
the various differential equations on $M,M_*$ according to
Equation~\ref{eq:Brinkmann}:
$$a''+\epsilon_1\widetilde{k}a=\epsilon_1c, \ (a'')^2+\epsilon_1\widetilde{k}(a')^2 = c_1, \  
\ \epsilon_1(a')^2 +\widetilde{k}a^2 - 2ac = d_1,$$
$$b''+\epsilon_2k_*b=\epsilon_2c, \ (b'')^2+\epsilon_2k_*(b')^2 = c_2, \ 
\epsilon_2(b')^2 +k_*b^2-2bc = d_2$$
with the coupling $c_1 = \widetilde{k}d_1 + c^2, \ c_2= k_*d_2 + c^2$.
From Equation~\ref{sternstern} we obtain
\begin{eqnarray*}
\overline{\lambda} &=& (a+b)\Big[(n-1)\widetilde{k}(a+b) +
(N-1)(-\widetilde{k}a+c) + n(-\widetilde{k}a+c) + n_*(-k_*b+c) \Big]\\
&& -N(\epsilon_1a'^2+\epsilon_2b'^2)\\
&=& N\big[ -\epsilon_1a'^2-\widetilde{k}a^2 - \epsilon_2b'^2 -k_*b^2+ 2ac + 2bc \big]\\
&=& N(-d_1-d_2).
\end{eqnarray*}
Equivalently we get
%The result is the equation
%$$\overline{\lambda} = -N(d_1 + d_2)  \ \mbox{ or, equivalently, } \
$\overline{k} = -(d_1+d_2).$
Moreover, in the special case of dimensions $n=n_*=2$
there are in fact examples of non-constant curvature and with
a non-vanishing Einstein constant $\overline{\lambda}$, see 
Example~\ref{examplesurfaces} (4) below.
\hfill $\Box$
%The trace of the two last equations above leads to
%\[0 = 2\nabla_Xf \cdot  \widetilde{n}\widetilde{\lambda} 
% + \widetilde{n}\nabla_X\Big[\Delta f + (\nabla_Xb)^{-1}\Big(f\nabla_X\Delta_*b%- N\cdot 
%\nabla_X\|\grad f\|^2\Big) \Big],\]
%\[0 = 2\nabla_Yf \cdot  n_*{\lambda_*} 
% + n_*\nabla_Y\Big[\Delta f + (\nabla_Ya)^{-1}\Big(f\nabla_Y\widetilde{\Delta}a%- N\cdot 
%\nabla_Y\|\grad f\|^2\Big) \Big].\]

\begin{exas}\label{examplesurfaces}\rm

(1) On a Riemannian product the special solution $a(t) = \cos t, \ b(s) = \cosh s$ 
in the situation of Theorem~\ref{pseudoriem}
realizes the case $c=0$, \ $\widetilde{k} = c_1 = c_2 = d_1 = -d_2 = 1$
and $\overline{\lambda} = 0$.
The same $\overline{\lambda}$ is obtained for $a(t) = \cos t + c, \
b(s)=\cosh s -c$ with arbitrary $c$.

\medskip
The special solution $a(t) = \cos t, \ b(s) = e^s$ 
realizes the case $c=0$, \ $\widetilde{k} = c_1 = d_1 = 1$, $c_2 = d_2
= 0$ and $\overline{k} = -1$.
With $a(t) = \cos t, \ b(s) = \sinh s$ 
we have the case $c=0$, \ $\widetilde{k} = c_1 = d_1 = d_2 = 1$, $c_2 
= -1$ and $\overline{k} = -2$.

\medskip
A positive $\overline{k}$ is obtained in the case $\widetilde{k} = k_*
= 0$ and $a(t) = t^2+1, \ b(s) = s^2+1$ and, consequently, $c=2$ and
$d_1 = d_2 = -4$.
This leads to $\overline{k} = 8$. Unfortunately, no complete $M$ or $M_*$
admits this solution except the Euclidean space itself, compare
Corollary~\ref{Cleyton}.

\medskip
(2) If $n \geq 3, n_* \geq 3$ then special solutions $a,b$ satisfy
$$\widetilde{\nabla}^2a = -\widetilde{k}a\widetilde{g}, 
\quad \nabla_*^2b = -k_*bg_*.$$ 
If in addition the Einstein constants
$\widetilde{\lambda}
= (n-1)\widetilde{k}$ and $\lambda_* = (n_*-1)k_*$ are equal,
then $g$ and $\overline{g}$ are both Einstein metrics on $M \times
M_*$.
Therefore the conformal transformation $g \mapsto \overline{g} =
(a+b)^{-2}g$ 
satisfies the equation in Corollary~\ref{Brinkmann}. 
This can actually happen if $\widetilde{k} = k_* = 0$. 
%and it shows that the Cases (1) and (2) are not disjoint
%since a conformal transformation between Riemannian
%Einstein spaces can occur only for warped product \cite{KR09}.
A concrete example with a Ricci flat $M \times M_*$
can be constructed as in Case (3)
in the Examples~\ref{ppexamples}.

\medskip
(3) (a compact example)

Let $(M,g)= (S^n,g)$ be the unit sphere with normalized scalar curvature
$k=1$.
Take a second copy $(M_*,g_*) = (S^n,-g)$ with a negative definite metric
and with normalized scalar curvature $k_* =-1$ and let $c=0$.
Then on $M$ the function $a(t) = \cos t$ satifies the equations
$\nabla^2 a = -kag = -ag$ and $a'^2+a^2=1$, on $M_*$ the function 
$b(s) = \cos s $
satisfies the equations $\nabla_*b = -k_*bg_* = bg_*$ and $-b'^2-b^2 = -1$.
By the results of Theorem~\ref{pseudoriem}
the metric $\overline{g} = (a+b)^{-2}(g + g_*)$ defines an Einstein
space whenever $a+b \not = 0$. The Einstein constant is
$\overline{\lambda} = -2n(d_1+d_2)=0$. 
%$$\overline{\lambda} = (n-1)(a+b)^2 + 
%(2n-2)(a+b)(-a) + (a+b)n(b-a)-(2n-1)(a'^2-b'^2)
%$$
%$$= (a+b)(-(2n-1)(b-a)) -(2n-1)(a'^2-b'^2) = (2n-1)(-(a^2+a'^2) +
%(b^2+b'^2)) = 0.$$
Unfortunately the function $f(t,s)=a(t)+b(s)$ has zeros. This means
that the product metric on $M\times M_*$ ist locally but not globally conformally
Einstein. It would not
help to introduce an additive constant $c$ here because then we would have
$a(t)=\cos t +c$ and $b(s)=\cos s -c$.

\medskip
(4) (Wong \cite[Thm.10.1]{Wo43}, compare \cite[Sect.18]{De00}, \cite{DS00}, \cite[Sect.6]{KR08})

If $n = n_* = 2$ then the proof of Theorem~\ref{pseudoriem}
indicates that the Gaussian curvatures $K,K_*$ should satisfy
the equation $K^2+2K'' = c = K_*^2+2K_*''$.
Therefore, following Corollary~\ref{Brinkmann}, we consider the warped product
metric $\widetilde{g} = dt^2 + K'^2(t)dx^2$
admitting the solution 
$$\nabla^2K = \frac{\Delta K}{2}\widetilde{g} = K''\widetilde{g}$$
with the Gaussian curvature $-K'''/K'$. Since the ODE 
$$K'''+KK'= 0$$ holds by differentiating $2K'' + K^2 = c$, 
$K(t)$ itself becomes the Gaussian curvature of $g$, and 
$K\widetilde{g}$ becomes the Ricci tensor. Conversely,
by integration we obtain the constant $c$ satisfying $2K''+K^2 = c$.
Similarly we have the metric $g_* = ds^2 +K_*'^2(s)dy^2$ with the ODE
$$K_*'''+K_*K_*'= 0.$$ 
There are nonconstant solutions $K(t), K_*(s)$.
Particular cases are
$$K(t) = -12 t^{-2}, K_*(s) = -12 s^{-2}$$
with $c=0$.
Then for any choice of $c$ we can verify the equations in the proof of
Theorem~\ref{pseudoriem} for the function $f(t,s) = K(t)+K_*(s) = a(t)
+ b(s)$ (
%nothing but
%%%%%%%%%%%%%%%%%%%changes
which is nothing else than %%%%%%%%%%%%%changes
the scalar curvature of the product metric)
and for $X = \partial_s, Y = \partial_t$:
We have 
$${\nabla_*}_Xb = K_*', \widetilde{\nabla}_Y a = K', {\Delta}_*b = 2K_*'',
\widetilde{\Delta} a = 2 K'', ||\grad f||^2 = K'^2 + K_*'^2.$$ This implies
$$\widetilde{\rm Ric} = K\widetilde{g}, {\rm Ric}_* = K_* g_*,
2\widetilde{\nabla}^2 K = 2K'' \widetilde{g} = (c-K^2) \widetilde{g},  
2\nabla_*^2 K_* = 2K_*'' g_* = (c-K_*^2) g_*.$$
By $ 2K''K' = cK' - K^2K'$ a second integration step leads to the equation\linebreak
$K'^2 = cK - \frac{1}{3}K^3 + d$ with a constant $d$, 
similarly $K_*'^2 = c_*K_* - \frac{1}{3}K_*^3 + d_*$.\linebreak
Finally, 
%the 
%%%%%%%%%%%%%%%%%%%%%%%%changes
Equation~\ref{sternstern} is satisfied with the Einstein constant  
$$\overline{\lambda}= -(K^3+K_*^3 + 3K'^2 + 3K_*'^2)+3c(K+K_*) = -3(d+d_*):$$
%since 
%$$(K^3+3K'^2)' = 3K'(K^2+2K'') = 3cK', (K_*^3+3K_*'^2)'= 3K_*'(K_*^2+2K_*'') = %3cK_*':$$
\begin{eqnarray*}
&&(K+K_*)^2 K + 2(K+K_*)K'' + [2(K+K_*)(K''+K_*'') - 3(K'^2 + K_*^2)]\\
&=& (K+K_*)^2 K + (K+K_*)(c-K^2) + [(K+K_*)((2c-K^2-K_*^2) - 3(K'^2 + K_*^2)]\\
&=& (K+K_*)[K^2+KK_*+3c-2K^2-K_*^2] -3(K'^2+K_*'^2)\\
&=& 3c(K+K_*) - (K^3+K_*^3 +3K'^2+3K_*'^2)\\
&=&\overline{\lambda}.
\end{eqnarray*}
Similarly we have
$$(K+K_*)^2 K_* + 2(K+K_*)K_*'' + [2(K+K_*)(K''+K_*'') - 3(K'^2 + K_*^2)] = \overline{\lambda}.$$
This implies that the metric $\overline{g} = (K+K_*)^{-2}(dt^2 + K'^2
dx^2 + ds^2 + K_*'^2dy^2)$ is a 4-dimensional Einstein metric.
Up to scaling, these examples are unique by the formulas above.
Up to scaling, the particular case with $c=d=d_*=0$ is the Ricci flat metric
with $K(t) = -12t^{-2}, K_*(s) = -12 s^{-2}$ on an open part of $\R^4$ 
$$ \overline{g} = \frac{t^4s^4}{(t^2+s^2)^2}\Big( 
  dt^2+\frac{576}{t^6}dx^2
+ds^2+\frac{576}{s^6}dy^2\Big).$$ 
A picture of a surface of revolution in 3-space 
with the metric $dt^2 + \frac{576}{t^6}d\varphi^2$
is similar to that of Beltrami's surface (pseudosphere):
If the distance from the axis of revolution 
is $r(t) = 24 t^{-3}$ then the height along the axis is
$h(t) = \int_{t_0}^t \sqrt{1-(72\tau^{-4})^2}d\tau$ with $t \geq t_0$
where $t_0= \sqrt[4]{72}$
is the zero of $h' = \sqrt{1-r'^2}$. The $t_0$-circle is a singularity in space
with $K(t_0) = -12 t_0^{-2} = -\sqrt{2}$ and with 
one vanishing principal curvature, and the curvature $K(t) = -12t^{-2}$ tends
to zero from below for $t \to \infty$
with asymptotic principal curvatures $24^{-1}t^3 \approx 1/r$ 
and $-288 t^{-5} \approx Kr$. 
The surface is complete 
in the positive $t$-direction.
It is not obvious that the product of this surface with itself
is conformally Ricci flat.
\end{exas}

\begin{cor} {\rm (The compact case)}\label{compact}
\label{cor:compact}

%The only compact products $(M,\widetilde{g}) \times (M_*,g_*)$ 
%of pseudo-Riemannian Einstein spaces
%such that $\overline{g} = (f(y,x))^{-2}(\widetilde{g} + g_*)$ 
%is Einstein
%with a conformal factor $f(y,x) = a(y)+b(x)$ and nonconstant $a,b$
%are of the type of the example above, namely, a product of
%a Riemannian sphere with an anti-Riemannian sphere.
%The function $f$ is globally defined but has zeros.
%Therefore $\overline{g}$ does not define a compact Einstein space.

%\medskip ANDERE VERSION:
Let $(M,g) \times (M_*,g_*)$ be compact and assume
it is a (locally) conformally Einstein space
such that the conformal factor $f$ always depends on both $M$ and $M_*$
simultaneously. Then $M,M_*$ are round spheres, one
with a positive definite metric, the other one with a negativ definite
metric and such that the normalized scalar curvatures satisfy $k_* = -k$.  
However, there is no conformal factor $f$ without a zero.
Consequently, the metric $\overline{g} = f^{-2}(g+ g_*)$
does not define (globally) an Einstein metric on a compact manifold. 
\end{cor}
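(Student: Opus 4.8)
\textit{Proof proposal.} The strategy is to feed the hypothesis into the Main Theorem and then exploit compactness twice -- first to pin down the geometry of the two factors, and then to locate a zero of the conformal factor. First I would invoke Theorem~\ref{pseudoriem}: since $f$ depends on both $M$ and $M_*$ we are in its case~$(2)$, so $f=a(y)+b(x)$ with $a,b$ non-constant, both $(M,\widetilde g)$ and $(M_*,g_*)$ are Einstein, the Hessians $\widetilde\nabla^2a$ and $\nabla_*^2b$ are pure trace, and $\widetilde k=-k_*$. Before proceeding I would discard the exceptional subcase $n=n_*=2$ with genuinely non-constant (``extremal'') curvature: there the function with pure-trace Hessian is, up to a scaling and an additive constant, the Gaussian curvature $K$ itself, so $\grad K$ is a non-trivial conformal gradient field on the compact surface; by the Tashiro--Obata classification a compact surface admitting such a field is a round $S^2$ (the Euclidean and warped-product alternatives being non-compact), on which the curvature is constant -- contradicting non-constancy of $K$. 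A residual indefinite surface metric is excluded by the same critical-point analysis used below, together with the fact that a compact surface carrying an indefinite metric is a torus or a Klein bottle. Hence in every case $M$ and $M_*$ are Einstein, and being Einstein they satisfy the sharpened equations $\widetilde\nabla^2a=(-\widetilde k\,a+c)\widetilde g$ and $\nabla_*^2b=(-k_*b+c)g_*$ for a common constant $c$, with $\widetilde k=-k_*$.

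Next I would run a critical-point argument on each compact factor. Put $\psi=-\widetilde k\,a+c$, so $\widetilde\nabla^2a=\psi\widetilde g$. Since $M$ is compact and $a$ non-constant, $a$ attains a maximum at some $p$ and a minimum at some $q$ with $a(q)<a(p)$, and at these points $\widetilde\nabla^2a$ is negative, respectively positive, semidefinite as a symmetric bilinear form; thus $\psi(p)\widetilde g_p\le0\le\psi(q)\widetilde g_q$. If $\widetilde g$ were indefinite, a nonzero multiple of an indefinite form is never semidefinite, which forces $\psi(p)=\psi(q)=0$; since $a(p)\ne a(q)$ this gives $\widetilde k=0$ and then $c=0$, hence $\widetilde\nabla^2a\equiv0$ and $\grad a$ is $\widetilde g$-parallel -- so identically $0$ or nowhere $0$, both impossible for a non-constant function on a compact manifold. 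Thus $\widetilde g$ is definite; if positive definite, the two sign conditions together with $a(q)<a(p)$ force $\widetilde k>0$ (the borderline $\widetilde k=0$ is again excluded, as integrating the trace equation over $M$ then gives $c=0$ and a constant $a$), and if negative definite they force $\widetilde k<0$. Applying this to both factors and using $\widetilde k=-k_*$, exactly one factor carries a positive definite metric with $\widetilde k>0$ and the other a negative definite metric with $k_*=-\widetilde k<0$, and neither is indefinite. On the positive definite factor the equation reads $\widetilde\nabla^2(a-c/\widetilde k)=-\widetilde k\,(a-c/\widetilde k)\,\widetilde g$, which on a compact manifold of dimension $\ge2$ is precisely Obata's equation; hence $(M,\widetilde g)$ is a round sphere of radius $1/\sqrt{\widetilde k}$ (for a $1$-dimensional factor $M=S^1$ this is immediate from the periodic non-constant solution of $a''=-\widetilde k\,a+c$), and likewise $(M_*,-g_*)$ is a round sphere. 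This proves the first assertion, including $k_*=-k$.

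Finally, to produce a zero of $f$ I would integrate the traces of the two Hessian equations over the compact factors. From $\widetilde\Delta a=n(-\widetilde k\,a+c)$ with vanishing total integral one gets that the mean value of $a$ equals $c/\widetilde k$, and likewise the mean value of $b$ over $M_*$ equals $c/k_*=-c/\widetilde k$. Since $a$ and $b$ are non-constant and continuous on the connected compact manifolds $M$ and $M_*$, each attains its mean value; hence there are points $p_0\in M$, $q_0\in M_*$ with $a(p_0)=c/\widetilde k$ and $b(q_0)=-c/\widetilde k$, so $f(p_0,q_0)=a(p_0)+b(q_0)=0$. Therefore no conformal factor -- which here is necessarily a globally defined function of the form $a+b$ -- can be everywhere positive, and consequently $\overline g=f^{-2}(g+g_*)$ cannot be a globally defined Einstein metric on the compact manifold $M\times M_*$. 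I expect the only genuine friction to be the low-dimensional bookkeeping: checking that the pure-trace Hessian equation is really the affine one (which can fail a priori only in the extremal $n=n_*=2$ case, already disposed of) and that the solutions $a,b$ of the overdetermined linear systems extend to global functions -- rather than the conceptual core, which is simply the two compactness arguments together with Obata's theorem.
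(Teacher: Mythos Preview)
Your proof is correct and follows the same overall architecture as the paper's: invoke the Main Theorem to reduce to case~(2), use compactness together with Obata--Tashiro to identify the factors as round spheres of opposite definiteness, and then exhibit a zero of $f$. The details, however, differ in several places and are in fact more self-contained than the paper's version.

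First, for the identification of the factors the paper simply cites as ``well known'' that a compact pseudo-Riemannian manifold with a non-constant solution of $(\nabla^2 a)^\circ=0$ must be a definite round sphere; you instead give an explicit critical-point argument (semidefiniteness of the Hessian at extrema is incompatible with an indefinite $\psi\widetilde g$ unless $\psi=0$, which forces a parallel gradient and hence constancy). Second, for the exceptional $n=n_*=2$ extremal subcase the paper carries out a detailed ODE computation showing that the boundary conditions $|a''|=A$ at the two poles are incompatible with the equations $ca^2+2a''=d$ and $a'^2=da-\tfrac{c}{3}a^3+e$; your argument is shorter and cleaner: since $a$ (equivalently $K$) has pure-trace Hessian, Tashiro forces the compact Riemannian surface to be a round sphere, on which $K$ is constant---a direct contradiction. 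Third, for the existence of a zero of $f$ the paper merely asserts ``in any case the function $f$ will have a zero''; your mean-value argument (integrate $\widetilde\Delta a=n(-\widetilde k a+c)$ to find that the averages of $a$ and $b$ sum to zero, then apply the intermediate value theorem on each connected factor) makes this explicit. Both proofs are somewhat brief on the genuinely indefinite $n=n_*=2$ subcase, but your remark that a compact Lorentzian surface has $\chi=0$ and that the critical-point mechanism then applies is at least as complete as what the paper offers.
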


{\it Proof.}
By Theorem~\ref{pseudoriem} the function $f = a + b$ would 
constitute a nonconstant solution of the differential equations
$$\widetilde\nabla^2a = \frac{\Delta a}{n}g
\ \mbox{ or } \ {\nabla}_*^2b = \frac{\Delta_* b}{n_*}g_*,$$ respectively.
It is well known that topologically the only such compact manifold $M$ or
$M_*$ is a sphere, and that the metric must be positive or negative definite. 
This holds even if the functions $a$ or $b$ are only locally defined.
In a neighborhood of every point there is such a solution.
However, we note that passing to the negative of a metric does not
change the covariant derivative, does not change the Ricci tensor
or the Hessian $(0,2)$-tensor,
but it does change the sign of the scalar curvature.
In other words: If  ${\nabla}_*^2b = -k_*bg_*$
holds then for the metric $g_{**} = -g_*$ we have
${\nabla}_{**}^2b = -k_{**}bg_{**} = k_*bg_{**}. $
For $n \geq 3, n_* \geq 3$  
the Einstein condition then implies that both $(M,\widetilde{g})$
and $(M_*,g_*)$ are round spheres. 
The condition $\widetilde{k} = -k_*$ implies that one is Riemannian,
the other one is anti-Riemannian with a negative definite metric.
We have the same conclusion if $n =2, n_* \geq 3$
and for $n=n_*=2$ if $M,M_*$ are spheres of constant curvature.
If $n=1$ then $a(t)$ would have to be periodic.
According to Corollary~\ref{f(t,x)} this leads to the same kind
of result. In any case the function $f$ will have a zero.

\medskip
Finally, if $n=n_* = 2$ then according to the proof of Theorem~\ref{pseudoriem}
we have to take the case of two compact surfaces with
nonconstant Gaussian curvatures $K(t)= ca(t),K_*(s)=cb(s)$ into account with a
constant $c > 0$.
The functions $a,b$ would have to satisfy
$$\widetilde{\nabla}^2a = \frac{\widetilde{\Delta}
    a}{2}\widetilde{g} \ \mbox{ and } \ 
{\nabla}_*^2b = \frac{\Delta_*b}{2}g_*.$$
It is well known that the round 2-sphere is the only compact 
surface of that kind, and that the metrics would have to satisfy the equations
$\widetilde{g} = \pm(dt^2 + a'^2(t) dx^2)$ and $g_* = \pm(ds^2 + b'^2(s)dy^2)$
with two exceptional points with a removable singularity
in these coordinates.
The exceptional points correspond precisely to the minima and maxima of the
Gaussian curvatures $K,K_*$.
Assume that one of the metrics is Riemannian and that 
there is such a point $p$ with $K' = a =0$.
Since the singularity in geodesic polar coordinates around $p$ is removable, 
we have necessarily $|a''| = A > 0$ at $p$. This is infinitesimally the
same as for the round sphere or the Euclidean plane
or the hyperbolic plane in polar coordinates.
Consequently, we have $a'' = A$ at the minimum and $a'' = -A$
at the maximum.                            
On the other hand we have $K = -a'''/a'$ and the ODEs                           
$$\textstyle ca^2 + 2a'' = d \ \mbox{ and } \ a'^2 = da - \frac{c}{3}{a^3} + e$$
with constants $d$ and $e$.                                   
The first OED implies $ca^2 = d-2A$ at the minimum and $ca^2 = d+2A$ at
the maximum.                              
Inserting this into the second ODE we obtain        
$$\textstyle 0 = \sqrt{\frac{d-2A}{c}}\Big(d-\frac{d-2A}{3}  \Big)+e$$
%$$0 = 3d \sqrt{(d-2)/c} - (\sqrt{(d-2)/c})^3 + 3e  = \frac{(3c-1)d+2}{c}\sqrt{(%d-2)/c} + 3e$$
at the minimum and
$$\textstyle 0 = \sqrt{\frac{d+2A}{c}}\Big(d-\frac{d+2A}{3}  \Big)+e$$
%$$0 = 3d \sqrt{(d+2)/c} - (\sqrt{(d+2)/c})^3 + 3e  = \frac{(3c-1)d-2}{c}\sqrt{(%d+2)/c} + 3e$$
at the maximum.
Combining these conditions leads to
$$ \sqrt{d-2A}\big( d+A \big)
= \sqrt{d+2A}\big(d-A \big).$$
%The difference of these two equations leads to
%$$((3c-1)d+2)^2(d-2) = ((3c-1)d-2)^2(d+2).$$  
However, there is no real solution $d$ of this equation unless $A = 0$
which would imply that $a$ is constant.
Therefore, no Riemannian manifold $(S^2,\widetilde{g})$ of this kind exists,
and no $(S^2,g_*)$ either.
This is not in contradiction with the fact that abstractly there are 
nontrivial periodic
solutions of the ODE $y^2 + 2y'' = d$ for positive $d$.
However, these necessarily produce surfaces with proper singularities, 
see Figure 1, compare the correction in \cite{Ta81}. \hfill $\Box$

%\begin{figure}[hbt]
%\centering
%\includegraphics[width=80mm]{extremal-color.jpg}
%\caption{extremal surface with a singularity}
%\end{figure}

\begin{figure}[t]
%\begin{SCfigure}
%\centering
%\label{fig:extremal}
%\includegraphics[scale=0.8]{extremal-surface-no-axis-color.jpg}
\includegraphics{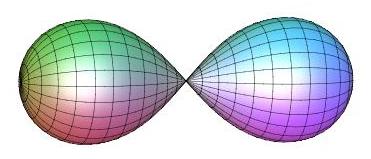}
\caption{Extremal surface as a surface
of revolution in $\R^3$ 
%\newline
satisfying 
$2a''+a^2 = 2$ and 
$a'^2=-a^3/3+2a-4/3.$}
%\end{SCfigure}
\end{figure}

\medskip
{\sc Remark:} The last statement on the product
of two surfaces with nonconstant curvature can be found also in
\cite[Thm.18.4]{De00}, a statement attributed to E.Calabi. 
In the Riemannian case no compact example of $M \times M_*$
is possible at all, except for the product of two curves. 
\begin{cor}{\rm (Ruiz \cite[Thm.1]{Ru09})} \label{Ruiz}

Assume that $n,n_* \geq 2$ and $(M_*^{n_*},g_*)$ is complete and flat, 
and let $(M^n,g)$ be
a compact Riemannian manifold. Then 
$(M,g)\times (M_*,g_*)$
is not globally conformal to any Einstein metric
with a positive Einstein constant.
%unless the product metric itself is already Einstein and $g$ is Ricci flat.
\end{cor}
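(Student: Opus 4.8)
The plan is to argue by contradiction, using the Main Theorem~\ref{pseudoriem}. Suppose $(M,g)\times(M_*,g_*)$ admits a global conformal change $\overline g=\varphi^{-2}(g+g_*)$ with $\varphi>0$ which is Einstein with Einstein constant $\overline\lambda>0$. If $\varphi$ were constant, then $\overline g$ would be homothetic to the product metric, forcing both factors to be Einstein with a common constant; since $g_*$ is flat this constant is $0$, so $\overline\lambda=0$, a contradiction. Hence $\varphi$ is non-constant and Theorem~\ref{pseudoriem} applies with $f=\varphi$. I will exclude each of its alternatives, using throughout the block-diagonal form of Equation~\ref{eq:ric} for the product (of dimension $n+n_*$) together with $\Ric_*=0$.

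Consider first case (2): $f=a(y)+b(x)$ with $a,b$ non-constant and both $(M,\widetilde g)$ and $(M_*,g_*)$ Einstein. Flatness of $g_*$ gives $k_*=0$. If $n\ge3$ or $n_*\ge3$ the theorem yields $\widetilde k=-k_*=0$; if $n=n_*=2$ the same holds, since the only alternative allowed by the theorem -- the extremal branch, in which $K_*=c\,b$ is non-constant -- is impossible when $K_*\equiv0$. Thus $\widetilde g$ is Ricci-flat and the equation of Corollary~\ref{Brinkmann} reads $\widetilde\nabla^2a=c\,\widetilde g$ for a constant $c$. Taking the trace gives $\widetilde\Delta a=nc$, and integrating over the \emph{compact} manifold $M$ forces $c=0$; then $a$ is harmonic on compact $M$, hence constant, contradicting case (2).

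Now consider case (1), in which $f$ depends on only one factor. If $f=f(y)$ depends only on $M$, then $\widetilde\nabla^2f$ vanishes in the $M_*$-directions, so the $M_*$-block of Equation~\ref{eq:ric} reduces to the scalar identity $\overline\lambda=f\,\widetilde\Delta f-(n+n_*-1)\,\|\widetilde\nabla f\|^2$ on $M$; evaluating it at a maximum of the smooth function $f$ on the compact manifold $M$ (where $\widetilde\nabla f=0$ and $\widetilde\Delta f\le0$) gives $\overline\lambda=f\,\widetilde\Delta f\le0$, contradicting $\overline\lambda>0$. If instead $f=f(x)$ depends only on the flat factor $M_*$, the $M$-block of Equation~\ref{eq:ric} first forces $\widetilde g$ to be Einstein with a genuine constant $\widetilde\lambda$; feeding this back into the $M_*$-block yields $\nabla_*^2f=\frac{\overline\lambda-\widetilde\lambda f^2}{(n+n_*-2)\,f}\,g_*$, in particular $(\nabla_*^2f)^\circ=0$.

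The hard part will be this last sub-case. Here I would invoke the fact that on a Ricci-flat manifold of dimension $n_*\ge2$ the equation $(\nabla_*^2f)^\circ=0$ forces $\nabla_*^2f=\kappa\,g_*$ with $\kappa$ a genuine \emph{constant} -- obtained by taking the divergence of $\nabla_*^2f=\kappa g_*$, which on a Ricci-flat space gives $d\kappa=\diver(\nabla_*^2f)=d(\Delta_*f)=n_*\,d\kappa$, hence $d\kappa=0$. Comparing with the formula above gives $\overline\lambda-\widetilde\lambda f^2=(n+n_*-2)\,\kappa\,f$ identically on $M_*$; since $f$ is non-constant, this polynomial identity in $f$ forces $\widetilde\lambda=\kappa=0$ and $\overline\lambda=0$, again contradicting $\overline\lambda>0$. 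As every alternative of Theorem~\ref{pseudoriem} is excluded, the product admits no global conformal change to an Einstein metric with positive Einstein constant, proving the corollary.
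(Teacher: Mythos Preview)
Your overall strategy matches the paper's: invoke the Main Theorem to split into the warped-product case and the $a+b$ case, and rule out each using compactness of $M$ together with flatness and completeness of $M_*$. Your treatment of case~(2) and of the sub-case $f=f(y)$ is fine; in fact your maximum-principle argument for $f=f(y)$ is a pleasant shortcut compared with the integral argument the paper uses.

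There is, however, an algebra slip in the sub-case $f=f(x)$ that creates a real gap. From the $M$-block of the transformed Ricci equation you correctly obtain $\widetilde\lambda f^{2}= \overline\lambda - \big(f\,\Delta_{*}f-(n+n_*-1)\,\|\grad_{*}f\|^{2}\big)$. Substituting this into the $M_*$-block gives
\[
(n+n_*-2)\,f\,\nabla_*^{2}f \;=\;\widetilde\lambda\,f^{2}\,g_*,\qquad\text{i.e.}\qquad \nabla_*^{2}f=\frac{\widetilde\lambda\,f}{\,n+n_*-2\,}\,g_*,
\]
\emph{not} $\nabla_*^{2}f=\dfrac{\overline\lambda-\widetilde\lambda f^{2}}{(n+n_*-2)\,f}\,g_*$. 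Your divergence argument still shows $\nabla_*^{2}f=\kappa\,g_*$ with $\kappa$ constant, but comparing with the corrected formula now yields only $\widetilde\lambda f=(n+n_*-2)\kappa$; since $f$ is non-constant this forces $\widetilde\lambda=0$ and $\kappa=0$, but it does \emph{not} by itself give $\overline\lambda=0$. One short step is missing: with $\kappa=0$ the gradient $\grad_*f$ is a nonzero parallel field on the complete flat $M_*$, so $f$ is affine and unbounded below along a geodesic, contradicting $f>0$ (this is the route the paper takes). Alternatively, plug $\widetilde\lambda=0$ and $\Delta_*f=n_*\kappa=0$ back into the $M$-block to get $\overline\lambda=-(n+n_*-1)\|\grad_*f\|^{2}<0$. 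Either way the contradiction is recovered, but your polynomial-identity shortcut, as written, rests on the incorrect formula.
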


{\it Proof.}
This follows from Theorem~\ref{pseudoriem} above
if we assume that $f= a+b$ is a globally defined and
strictly positive function.
Since $f$ cannot be constant we assume that the product 
is conformally Einstein
with a non-constant conformal factor $f$.
If $a$ is constant then $g$ is Einstein. By Equation~\ref{sternstern}
$\nabla_*^2b$ is a scalar multiple of $g_*$. Since $g_*$ is flat
we have $\nabla_*^2b=cg_*$ with a constant $c$.
Then Equation~\ref{sternstern} implies the equation $kf = k(a+b) = (N-1)c$.
So either $g$ is Ricci flat ($k =0$) and $c=0$ or $b$ is constant.
In the former case $\nabla_*^2b = 0$ implies that $b$ is either
constant or a linear function, necessarily with a zero.
This leads to a contradiction on a complete manifold. 

\smallskip
If $b$ is constant then by ${\rm Ric}_* = \nabla_*^2b = 0$
Equation~\ref{sternstern} tells us that we have\linebreak
$f^2 {\rm Ric} + (N-1)f \nabla^2a = 0$ and, moreover,
$f\Delta f - N||\grad f||^2 = f\Delta a - N||\grad a||^2$ 
is the Einstein constant $\overline{\lambda}$.
By the compactness of $M$ we have $\int_M\Delta a = 0$,
therefore $- N\int_Mf^{-1}||\grad f||^2 = \overline{\lambda}\int_M f^{-1}$.
Since $f > 0$ we have $\overline{\lambda} < 0$.
%by taking the trace we get $f^2S +(N-1)f\Delta a = f^2S + (N-1)f \Delta
%f = 0$. Together this implies 
%$f^2S-N(N-1)||\grad f||^2$ is constant, hence $f^2k-||\grad f||^2$
%is constant.

\smallskip
If both $a$ and $b$ are not constant 
then $g$ is an Einstein metric and we have $k = -k_* = 0$.
However, the equation $\nabla^2a = cg$ with a constant $c$ has no solution
on a compact manifold except if $a$ is constant and $c=0$.
The case of a nonvanishing parallel gradient of $a$ is impossible.
But then $b$ would be either constant or a linear function.
In the former case $f$ is constant, and 
in the latter case $f = a+b$  has a zero, a contradiction
in any case. \hfill $\Box$
\begin{cor}{\rm (Cleyton \cite[Thm.2]{Cl08})} \label{Cleyton}

Assume $n,n_*\geq 2$, and let $(M^n,g)$ and $(M_*^{n_*},g_*)$
be complete Riemannian manifolds. If $M\times M_*$ is globally
conformal to an Einstein space with metric $f^{-2}(g+g_*)$ 
that is not a warped product, then
$M$ and $M_*$ are Euclidean spaces, and the conformal factor $f$
is $f(y,x) = \frac{1}{2}(||x||^2+||y||^2) + d^2$ with a constant $d
\not = 0$. 
\end{cor}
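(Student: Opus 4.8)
The plan is to deduce this from the Main Theorem together with the rigidity of complete Riemannian manifolds carrying a concircular (conformal gradient) function. Since $\overline{g}$ is not a warped product, Theorem~\ref{pseudoriem} places us in case~$(2)$: we may write $f(y,x) = a(y) + b(x)$ with non-constant $a$ and $b$, both $(M,g)$ and $(M_*,g_*)$ are Einstein with $\widetilde{k} = -k_*$, and -- apart from the exceptional $n = n_* = 2$ subcase of nonconstant curvature, which I dispose of first -- $a$ and $b$ satisfy the Hessian equations $\widetilde{\nabla}^2 a = (-\widetilde{k}a + c)\widetilde{g}$ and $\nabla_*^2 b = (-k_* b + c)g_*$ with one and the same constant $c$. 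In that exceptional subcase one has instead $\widetilde{g} = dt^2 + a'(t)^2\,dx^2$, a surface of revolution, so $\overline{g} = f^{-2}(\widetilde{g}+g_*)$, with $f = a(t)+b(x)$ independent of the $x$-coordinate, is a warped product over $I\times M_*$ with a one-dimensional fibre -- contrary to hypothesis. Hence that subcase does not occur.

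The next step is to eliminate the non-flat factors. On a complete Riemannian manifold the equation $\nabla^2 a = \psi g$ with $a$ non-constant is rigid: by the classification of Obata, Tashiro and Kanai (see \cite{Ta65}, \cite{KR09}), $(M,g)$ is either a Euclidean space with $a$ a quadratic function of the distance to a point, or a round sphere, or a space of hyperbolic type -- namely $\mathbb{H}^n$ or a rotationally symmetric warped product $I\times_w N$ over an Einstein base $N$. In every case other than the Euclidean one, $a$ depends only on the radial coordinate $r$ and $(M,g)$ is, away from the collapsing fibre (the poles of the sphere, the centre of the ball, and so on), a warped product $I\times_w N$ with $a = a(r)$; hence $\overline{g} = f^{-2}(\widetilde{g}+g_*)$ with $f = a(r)+b(x)$ independent of $N$ is itself a warped product over $I\times M_*$ with fibre $N$, contradicting the hypothesis that $\overline{g}$ is not a warped product. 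The same reasoning applies to $(M_*,g_*)$. Therefore both factors are flat, $\widetilde{k} = k_* = 0$, and the Hessian equations reduce to $\widetilde{\nabla}^2 a = c\,\widetilde{g}$, $\nabla_*^2 b = c\,g_*$.

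It remains to treat the flat case. If $c = 0$ then $\nabla^2 f = 0$ on the complete product $M\times M_*$, so $\grad f$ is parallel; a non-constant such $f$ splits off an $\mathbb{R}$-factor along which it is affine and hence attains the value $0$, contradicting $f>0$. Thus $c\neq 0$. From $X\|\grad a\|^2 = 2g(\nabla_X\grad a,\grad a) = 2c\,X(a)$ one obtains $\|\grad a\|^2 = 2ca + \mathrm{const}$, and running the gradient flow of $a$ toward its extremum -- a flow of finite length, since $\|\grad a\|$ decays exponentially along it -- completeness produces a critical point of $a$. By Tashiro's theorem, $(M,g)\cong(\mathbb{R}^n,\mathrm{flat})$ with $a(y) = \frac{c}{2}\|y-y_0\|^2 + \mathrm{const}$, and likewise $(M_*,g_*)\cong(\mathbb{R}^{n_*},\mathrm{flat})$ with $b(x) = \frac{c}{2}\|x-x_0\|^2 + \mathrm{const}$. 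After translating the origins and rescaling the metric by a homothety so that $c = 1$, we obtain $f(y,x) = \frac{1}{2}\bigl(\|x\|^2+\|y\|^2\bigr) + e$; positivity of $f$ everywhere then forces $e>0$, and writing $e = d^2$ with $d\neq 0$ finishes the argument.

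I expect the second step to be the main obstacle: ruling out the round-sphere and hyperbolic-type factors. Positivity of $f = a+b$ alone is not enough -- for example, on a round sphere times a hyperbolic space $f$ can be bounded below by a positive constant -- so one genuinely needs the structure theory of complete manifolds admitting a concircular function, together with the observation that in each non-Euclidean case the resulting Einstein metric $\overline{g}$ is, up to a collapsing fibre, a warped product, which is exactly what the hypothesis excludes.
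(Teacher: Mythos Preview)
Your approach parts ways with the paper at the crucial step of eliminating the non-flat factors. The paper does \emph{not} argue via a hidden warped-product structure; it argues via positivity of $f$. Concretely: once $\widetilde{k}=-k_*\neq 0$, say $k_*>0$, Obata--Tashiro forces $M_*$ to be a round sphere with $b$ bounded, while $a$ solves $a''=k_*a+c$ on the complete Einstein space $M$ of negative normalized scalar curvature; the paper then asserts that $a+b$ acquires a zero on $M\times M_*$, contradicting the global definition of $\overline{g}$. The flat case $\widetilde{k}=k_*=0$ is then handled exactly as you do in your last paragraph.

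Your alternative for the non-flat step has a real gap. You deduce that $(M,g)$ is, \emph{away from the collapsing fibre}, a warped product $I\times_w N$, and hence that $\overline{g}$ is a warped product over $I\times M_*$ with fibre $N$ on that open dense set. But a metric that is a warped product on an open dense subset need not be a global warped product (the round sphere in polar coordinates is the obvious counterexample), so you have not contradicted the hypothesis. Worse, in the paper's usage ``$\overline{g}$ is a warped product'' refers to case~(1) of the Main Theorem, i.e.\ $f$ depending on only one of $M,M_*$; the structure you produce, with base $I\times M_*$ and fibre $N\subset M$, is a warped product in a \emph{different} sense and does not put you back into case~(1) at all. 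The same objection applies verbatim to your handling of the exceptional $n=n_*=2$ subcase.

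Your final remark is perceptive: the example $M=H^n$, $M_*=S^{n_*}$ with $a=C\cosh r - c$, $b=A\cos s + c$ and $C>|A|$ does give $f=C\cosh r + A\cos s>0$ everywhere, so positivity of $f$ alone is indeed not automatically violated. This is worth flagging as a subtlety in the paper's argument. But your proposed remedy does not close the gap either. If you want to pursue the warped-product route, you would have to show that in each non-Euclidean case $\overline{g}$ is a \emph{global} warped product on $M\times M_*$ --- which requires identifying the correct fibre (in the $H^n\times S^{n_*}$ example it is all of $S^{n_*}$, with base an open hemisphere, not the $S^{n-1}$ from the polar decomposition of $H^n$) and verifying smoothness across the exceptional locus. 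That is a substantially harder statement than what you have written.
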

{\it Proof.} The proof is somehow contained in the proof of
the preceding Corollary. Since the conformal product is assumed
not to be a warped product, we have only the third case with a
function $f=a+b$ where both $a$ and $b$ are non-constant.
By the equation $\widetilde{k} = -k_*$ we have two cases: $k_* = 0$ and
$k_* \not = 0$. In the latter case $k_* > 0$
one of the equations reads as $b'' = -k_*b + c$. This implies that
$M_*$ is a sphere of sectional curvature $k_*$ and that 
$b(s)= \sin s + c$
along the trajectories of its gradient. This statement is
known as the theorem of Obata \cite{Ob62} and Tashiro \cite{Ta65}.
Consequently we have the equation $a'' = k_*a + c$, so $a$ is
a hyperbolic function minus $c$ along the trajectories of its gradient.
In any case $a(t) + b(s)$ has a zero on the complete space $M\times
M_*$.
There remains the former case $\widetilde{k}=k_*=0$. In this case
$a$ and $b$ satisfy the equations $\widetilde{\nabla}^2a =
c\widetilde{g}, \nabla_*^2b=cg_*$ with a constant $c$.
Then $a$ and $b$ are quadratic polynomials along the trajectories
with the leading coefficient $c/2$.
If $c=0$ then both are linear and, therefore, have zeros on
$M\times M_*$.
Furthermore it is well known that the Euclidean space is the only
complete Riemannian manifolds admitting a non-constant and
everywhere positive function $a$
with $\nabla_*^2b = cg_*$ and a constant $c \not = 0$. In polar
coordinates $g_* = ds^2 + b'^2(s)g_1$ around the origin 
we have $c^2=1$ and $b(x) =
\frac{1}{2}||x||^2$
plus a positive constant, similarly for $a$.
The assertion follows. \hfill $\Box$
\begin{cor} {\rm (the ``improper case''  
in the terminology of Brinkmann \cite{Br25})}\label{improper}

Under the same assumptions as for Theorem~\ref{pseudoriem}
the following holds:
If in addition $\grad f$ is a null vector field (i.e., if $||\grad  f||^2 =
0$) on an open subset
then either one of the following three cases occurs:
\begin{enumerate}
\item $(M \times M_*,\overline{g})$ is a warped product
with a Ricci flat Brinkmann space as the base,
\item
$(M,\widetilde{g})$ and $(M_*,g_*)$
are Ricci flat Brinkmann spaces carrying a parallel null vector field
each.
On the other hand the product metric $g = \widetilde{g}+g_*$
itself is Ricci flat,
so we have a case of Corollary~\ref{Brinkmann} also. 
\item Both admit a parallel non-null vector field, one
spacelike, the other timelike.
The latter case implies that in $M$ and $M_*$ both
are pseudo-Riemannian products of $\R$ with a Ricci flat manifold. Hence in
the product a $2$-dimensional flat factor splits off.
\end{enumerate}
Consequently, if $(M\times M_*,\widetilde{g}+g_*)$
is not Ricci flat then we have Case (1), i.e.,
$\overline{g}$ is a warped product metric
with a Ricci flat Brinkmann space as base but the fiber is not
Ricci flat.
By \cite[Cor.9.107]{Be87} this case cannot occur:
We would necessarily have $f = u$ with the lightlike 
${\rm grad} f = \partial_v$ on the
Brinkmann base, furthermore $\overline{\lambda} =0$ and an
Einstein $(M_*,g_*)$ with $\lambda_* = 0$.  
Hence the product metric is always Ricci flat. 
\end{cor}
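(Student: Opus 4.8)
The plan is to derive Corollary~\ref{improper} directly from the Main Theorem~\ref{pseudoriem} together with the explicit classification of solutions of $(\nabla^2 f)^\circ = 0$ in the improper case, i.e.\ when $\grad f$ is null. First I would note that under the hypothesis $\|\grad f\|^2 = 0$ on an open set, Lemma~\ref{lem:ric} gives $f^2(\overline{\Ric}-\Ric) = (N-1)f\nabla^2 f + f\Delta f\cdot g$; combined with the Einstein condition for $\overline g$ this shows $\nabla^2 f$ still splits as a block tensor, so by the proof of Theorem~\ref{pseudoriem} we again have $f = a(y)+b(x)$ with $\widetilde\nabla^2 a$ proportional to $\widetilde g$ and $\nabla_*^2 b$ proportional to $g_*$. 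The novelty is only that now $\|\widetilde\grad a\|^2 + \|\grad_* b\|^2 = 0$. I would split into the natural subcases according to whether $a$ or $b$ is constant, and — when both are non-constant — according to the signs $\epsilon_1 = \sign\|\widetilde\grad a\|^2$, $\epsilon_2 = \sign\|\grad_* b\|^2$.

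Next, the case analysis. \emph{Case (1):} if one factor, say $a$, is constant, then $\overline g$ is a warped product over $(M_*,g_*)$ and $\grad f = \grad_* b$ is itself a null conformal gradient field on $M_*$. By \cite[Thm.3.12]{KR09} (quoted in the text after Corollary~\ref{Brinkmann}), $\nabla_*^2 b = 0$ and $\Ric_* = 0$, so the base is a Ricci-flat Brinkmann space carrying the parallel null field $\grad_* b$; this is Case~(1) of the corollary. \emph{Both non-constant:} then Theorem~\ref{pseudoriem} forces $\widetilde g$ and $g_*$ to be Einstein and $\widetilde\nabla^2 a = (-\widetilde k a + c)\widetilde g$, $\nabla_*^2 b = (-k_* b + c)g_*$ with $\widetilde k = -k_*$. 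If $\widetilde\grad a$ and $\grad_* b$ are both null then each separately is a null conformal gradient field on an Einstein space, so again $\widetilde\nabla^2 a = \nabla_*^2 b = 0$, $\widetilde k = k_* = 0$, and both factors are Ricci-flat Brinkmann spaces with a parallel null field; since then $\nabla^2 f = 0$ the product metric $g = \widetilde g + g_*$ is itself Ricci flat — this is Case~(2). The remaining possibility is that $\|\widetilde\grad a\|^2$ and $\|\grad_* b\|^2$ are non-zero of opposite sign, say $\epsilon_1 = +1$, $\epsilon_2 = -1$; here I would invoke the Riemannian-signature solution of $(\nabla^2 a)^\circ = 0$ with non-null gradient from Corollary~\ref{Brinkmann}, which in the flat ($\widetilde k = 0$) case gives a parallel gradient, hence a split $\R$-factor. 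One checks from $\widetilde k = -k_*$ and $\epsilon_1 = -\epsilon_2$ that the scalar-equation of (\ref{sternstern}) is compatible only when $\widetilde k = k_* = 0$, and then $a$ is linear along its (spacelike) trajectory and $b$ linear along its (timelike) one; thus $M$ and $M_*$ are each a pseudo-Riemannian product of $\R$ with a Ricci-flat manifold, and a $2$-dimensional flat factor $\R^{1,1}$ splits off in $M\times M_*$ — this is Case~(3).

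Then I would assemble the final consequence. In Cases (2) and (3) the product metric $g = \widetilde g + g_*$ is Ricci flat outright. In Case (1) it need not be, so one must rule it out when $\overline\lambda \neq 0$ is allowed but $g$ is not Ricci flat: here $\overline g$ is a warped product $\overline g = f^{-2}(\widetilde g + g_*)$ with $f = b$, $\|\grad b\|^2 = 0$, over the Ricci-flat Brinkmann base $(M_*, g_*)$. Writing the Brinkmann base in standard coordinates $(u,v,\dots)$ with $\grad b = \partial_v$ lightlike forces $b = u$ up to a shift, and then the warped-product Einstein equations reduce, by \cite[Cor.9.107]{Be87}, to $\overline\lambda = 0$ and $\Ric_{\widetilde g} = \lambda\widetilde g$ with $\lambda = 0$; thus the fiber $(M,\widetilde g)$ is also Ricci flat and $g$ is Ricci flat after all. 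Hence if $g$ is \emph{not} Ricci flat the improper case cannot occur at all, which is exactly the concluding assertion.

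The main obstacle I anticipate is the bookkeeping in the mixed sign subcase $\epsilon_1 = -\epsilon_2$: one has to run the full chain of covariant differentiations of (\ref{sternstern}) (as in the proof of Theorem~\ref{pseudoriem}) while carefully tracking the signs $\epsilon_i$ and the isotropy of the total gradient $\grad f = \widetilde\grad a + \grad_* b$, verifying that the only internally consistent outcome is $\widetilde k = k_* = 0$ with both $a$ and $b$ affine, and that the two one-dimensional factors have opposite causal character so that they combine into a flat Lorentzian plane $\R^{1,1}$ rather than a Riemannian $\R^2$. The appeal to \cite[Cor.9.107]{Be87} for the elimination of Case~(1) is a black box and needs only that the base be a Ricci-flat Brinkmann (pp-wave) space with lightlike $\grad f$; matching that reference's normalization $f = u$, $\grad f = \partial_v$ to our setting is routine but should be spelled out.
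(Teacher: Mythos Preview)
Your proposal is correct and follows essentially the same route as the paper: invoke Theorem~\ref{pseudoriem} for the splitting $f=a+b$, then branch according to which of $a,b$ is constant and the causal type of the two gradients.

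The only notable difference is in your handling of Case~(3). You flag as the ``main obstacle'' the bookkeeping needed to extract $\widetilde k = k_* = 0$ from the scalar part of (\ref{sternstern}) before concluding that $a$ and $b$ are affine. The paper bypasses this entirely with a one-line observation: since $\|\widetilde\grad a\|^2$ depends only on $M$ and $\|\grad_* b\|^2$ only on $M_*$, the identity $\|\widetilde\grad a\|^2 + \|\grad_* b\|^2 = 0$ forces \emph{both} to be constant. If that constant is zero you are in Case~(2); if it is nonzero, then from $\widetilde\nabla^2 a = \sigma\,\widetilde g$ and $0 = d\|\widetilde\grad a\|^2 = 2\sigma\, da$ with $da\neq 0$ you get $\sigma=0$ immediately, so $\widetilde\grad a$ is parallel (and likewise for $b$). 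The vanishing $\widetilde k = k_* = 0$ then drops out of $\widetilde\nabla^2 a = (-\widetilde k a + c)\widetilde g = 0$ rather than having to be established first. This removes precisely the obstacle you anticipated.
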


{\it Proof:}
By Theorem~\ref{pseudoriem} 
either case (1) occurs or 
$M,M_*$ are Einstein spaces admitting solutions of
$\widetilde{\nabla}^2a = \frac{\widetilde{\Delta}a}{n}\widetilde{g}$
and $\nabla_*^2b = \frac{\Delta_* b}{n_*}g_*$ with $f = a+b$
and $||\widetilde{\grad} a||^2 = -||\grad_* b||^2$.
Since one depends only on $M$, the other one only on $M_*$,
both must be constant.
If both are zero then Case (2) occurs by \cite[Thm.3.12]{KR09}.
Otherwise both $\widetilde{\grad} a$ and $\grad_* b$
are parallel vectors, one of them spacelike, the other one timelike.
\hfill $\Box$

\begin{exa}\label{ppexamples} \rm Case (2):
A $pp$-wave with metric $$ds^2 = -2H(u,x,y)du^2 -2dudv + dx^2 + dy^2$$ 
is Ricci flat if and only if the spatial Laplacian
$\Delta H = H_{xx}+H_{yy}$ vanishes \cite{EM86},\cite{KR04}.
The vector field $\grad u = \partial_v$ is a parallel null vector
field, thus the
equation $\nabla^2u = 0$ is satisfied.
If we take the cartesian product of two such $pp$-waves
with parameters $u_i,v_i,x_i,y_i$, $i=1,2$,
then the function $f = u_1+u_2$ satisfies all the
assumptions in Theorem~\ref{pseudoriem}.
This is an example of Case (2) in Corollary~\ref{improper}.

\medskip
Case (1): If $g_*$ is a Ricci flat $pp$-wave as in Case (2)
then with $M = \R$ and $f(t,x) = t+1$ the
warped product $\overline{g} = f^{-2}(dt^2 + g_*)$
is an Einstein space with Einstein constant $\overline{\lambda} = -1$.
This corresponds to one of the cases discussed in 
Proposition~\ref{f(t)}.

\medskip
Case (3): The products $\R \times M$ with metric $dt^2+g$ and $\R \times M_*$
with metric $-d\tau^2+g_*$ where both $g,g_*$ are Ricci flat
provide an example
by taking $\R \times M \times \R \times M_*$ with the product
metric $ds^2 = dt^2 + g -d\tau^2 + g_*$.
The function is $f(t,\tau) = t + \tau$.
Obviously a factor $\R^2$ with the Lorentzian metric $dt^2 -d\tau^2$
splits off.
\end{exa}
\begin{cor}\label{f(t,x)}
 {\rm (Conformally Einstein products of type $\R \times M_*$)}

If $f(t,x)$ is a non-constant function of $t\in \R$ and $x \in M_*$
with an $m$-dimensional pseudo-Riemannian manifold $(M_*,g_*)$, $m\geq 2$, 
and if the metric $\overline{g} = f^{-2}(\epsilon dt^2 + g_*)$ is Einstein 
then one of the following cases occurs:

\begin{enumerate}
\item $f$ depends only on $t$ (the case discussed in Proposition~\ref{f(t)}) 
\item $f$ depends only on $x$ (the case discussed in Proposition~\ref{conf-product})
\item $f(t,x) = a(t) + b(x)$ with non-constant $a$ and non-constant $b$,
and $(M_*,g_*)$ is an $m$-dimensional Einstein space
with constant normalized scalar curvature $k_*$ (even in dimension $2$), 
and $a$ satisfies the equation $a'' = \epsilon k_*a + c$ for a constant $c$,
and simultaneously 
$b$ satisfies the equation $\nabla_*^2b = \frac{\Delta_* b}{m}g_*$.
Such Einstein spaces can be (locally and globally) classified
\cite{Ku88}, \cite{KR09}.
\end{enumerate}
\end{cor}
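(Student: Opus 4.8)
The plan is to treat this as the $n=1$ case of Theorem~\ref{pseudoriem}; since that proof explicitly deferred the case $n=1$ here, I run the relevant steps directly on $M=\R$. Write $\widetilde{g}=\epsilon\,dt^2$, $g=\widetilde{g}+g_*$ and $N=n_*=m\ge 2$. Two simplifications are available at once: the Ricci tensor $\widetilde{\Ric}$ of the $1$-dimensional factor vanishes, and $\widetilde{\nabla}^2a=\epsilon a''\,\widetilde{g}$ for any function $a(t)$. First I would observe, exactly as in the opening step of the proof of Theorem~\ref{pseudoriem}, that $\overline{g}=f^{-2}g$ being Einstein forces $\nabla^2f$ to be block-diagonal for the product decomposition, because both $\overline{\Ric}=\overline{\lambda}f^{-2}g$ and $\Ric$ are; hence the mixed second derivatives $\partial^2f/\partial t\,\partial x_i$ vanish and $f(t,x)=a(t)+b(x)$ on the connected manifold $\R\times M_*$. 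If $b$ is constant this is case (1) and Proposition~\ref{f(t)} applies; if $a$ is constant this is case (2) and Proposition~\ref{conf-product} applies. So I may assume $a$ and $b$ both non-constant.

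Next I would read Equation~\ref{sternstern} block by block. Its $g_*$-block is $\overline{\lambda}g_*-f^2\Ric_* = (N-1)f\,\nabla_*^2b + [\,f\Delta f-N\|\grad f\|^2\,]g_*$; taking its trace-free part with respect to $g_*$ gives $(\Ric_*)^\circ = -\frac{N-1}{f}(\nabla_*^2b)^\circ$. The left-hand side is independent of $t$, so differentiating in the $t$-direction and using $a'\not\equiv 0$ forces $(\nabla_*^2b)^\circ=0$, whence $(\Ric_*)^\circ=0$: the factor $(M_*,g_*)$ is Einstein, $\Ric_*=(m-1)k_*g_*$, and $\nabla_*^2b=\psi g_*$ with $m\psi=\Delta_*b$, i.e.\ $\nabla_*^2b=\frac{\Delta_*b}{m}g_*$. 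Substituting this, the $g_*$-block and the $\widetilde{g}$-block of \ref{sternstern} both become scalar equations; eliminating the common combination $\overline{\lambda}-f\Delta f+N\|\grad f\|^2$ between them and cancelling $f$ and the factor $N-1=m-1$ yields the clean relation $\epsilon a''=(a+b)k_*+\psi$.

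The crucial step --- the only one that genuinely needs the $n=1$ structure rather than a citation of Theorem~\ref{pseudoriem}, since it must also cover $m=2$ --- is to promote $k_*$ from a function on $M_*$ to a constant. I would differentiate $\epsilon a''-ak_*=bk_*+\psi$ in $t$: the right-hand side is $t$-independent, so $\epsilon a'''(t)=a'(t)\,k_*(x)$ for all $(t,x)$; evaluating at a point $t_0$ with $a'(t_0)\ne 0$ (which exists, $a$ being non-constant and $C^4$) gives $k_*(x)=\epsilon a'''(t_0)/a'(t_0)$, independent of $x$. Once $k_*$ is constant, $\epsilon a'''=k_*a'$ integrates to $a''=\epsilon k_*a+c$ with a constant $c$ --- the asserted ODE for $a$ --- and feeding this back into $\epsilon a''=(a+b)k_*+\psi$ gives $\psi=\epsilon c-k_*b$, so $b$ even satisfies $\nabla_*^2b=(-k_*b+\epsilon c)g_*$, the Brinkmann-type equation of Corollary~\ref{Brinkmann} on the Einstein space $(M_*,g_*)$, whose non-constant solutions together with the admissible metrics are classified in \cite{Ku88}, \cite{KR09}. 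I expect this differentiation-in-$t$ argument for the constancy of $k_*$ to be the only non-routine ingredient; the rest is the block-matrix bookkeeping already done for Theorem~\ref{pseudoriem}, and a brief converse check (inserting $f=a+b$ with these $a,b$ into \ref{sternstern}) confirms that such data do produce an Einstein $\overline{g}$.
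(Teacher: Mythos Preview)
Your proof is correct and follows essentially the same approach as the paper: both arguments use the block-diagonal structure of Equation~\ref{sternstern} to split $f=a(t)+b(x)$, then differentiate in $t$ to separate variables and force the $M_*$-quantities to be constant. The only cosmetic difference is the order of operations---you pass to the trace-free part first to obtain $(\Ric_*)^\circ=0$ and then derive the scalar relation $\epsilon a''=(a+b)k_*+\psi$ whose $t$-derivative gives $k_*$ constant, whereas the paper first takes the trace to obtain $S_*$ constant and the ODE for $a$, and only afterwards returns to the full tensor equation to conclude that $g_*$ is Einstein and $\nabla_*^2b$ is a multiple of $g_*$.
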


\begin{proof}
This is nothing 
%but 
%%%%%%%%%%%%%%%%%changes
else than a special case of Theorem~\ref{pseudoriem}.
Independently, the calculation is a bit simpler as follows:
Let $g = \epsilon dt^2+ g_*$. Then by Equation~\ref{eq:ric}
$$\overline{\Ric} - \Ric = (m-1) f^{-1} \cdot 
\nabla^2 f + \Big[f \cdot \Delta f - m \cdot 
\|\grad f\|^2 \Big]\cdot \overline{g} $$ 
the Einstein condition for $\overline{g}$ 
implies that $\nabla^2 f$ has the form
\[ \nabla^2 f = \left( \begin{array}{cc}
\frac{\partial^2f}{\partial t^2}&0\\0&*\end{array} \right)\] 
with some $(0,2)$-tensor on $M_*$ denoted by $*$.
In coordinates $x_1, \ldots,x_n$ on $M_*$ this implies
$\frac{\partial^2f}{dtdx_i} = 0$ for any $i$.
Therefore $f$ splits as
$$f(t,x) = a(t) + b(x)$$ 
with functions $a$ of the real variable $t$ and $b$ of $x \in M_*$.
This implies
\[ \nabla^2 f = \epsilon a'' (\epsilon dt^2) + \nabla^2_*b = \left( \begin{array}{cc}
\epsilon a''&0\\0&\nabla_*^2 b\end{array} \right)\] 
and 
$$||\grad f||^2 = \epsilon (a')^2 + ||\grad_* b||^2, \quad  \Delta f =
\epsilon a'' + \Delta_* b.$$
From 
\[\left( \begin{array}{cc}
\epsilon a''&0\\0&\nabla_*^2 b\end{array} \right)=
\epsilon a''f^2 \overline{g} + \left( \begin{array}{cc}
0&0\\0&\nabla_*^2 b-\epsilon a''g_*\end{array} \right)
\]
we see that $\overline{g}$ is Einstein if and only if
$$(a+b)\Ric_* + (m-1)\Big(\nabla_*^2 b-\epsilon a''g_*\Big) = 0.$$
Splitting the $t$-part from the $x$-part leads to
\begin{equation}
\label{eq:ric*}
a\Ric_*-(m-1)\epsilon a''g_* = -b\Ric_* -(m-1)\nabla_*^2b
\end{equation}
with the trace
$$aS_* - m(m-1)\epsilon a'' = -bS_* - (m-1)\Delta_* b.$$
Differentiating by $t$ leads to
$$a'S_* - m(m-1)\epsilon a''' = 0,$$
hence $S_* = m(m-1)k_*$ must be constant (unless $a$ is constant)
since it depends only on $x$.
It also follows that $a$ is a solution of the standard ODE
$a''' = \epsilon k_*a'$, in particular $a'' = \epsilon k_*a + c$ with a constant $c$
and, by the trace of Equation~\ref{eq:ric*}, with $c = k_*b + \Delta_*b/m$.
In particular $\Delta_*b$ depends only on $b$.
By inserting this into Equation~\ref{eq:ric*}
we obtain
\begin{equation}
\label{eq:ric*new}
a\Big(\Ric_*-\frac{S_*}{m}g_*\Big)=(m-1)cg_* -b\Ric_* -(m-1)\nabla_*^2b
\end{equation}
Since only the left hand side depends on $t$ but the right hand
side does not, it follows that either
$a$ is constant (then the constant can be incorporated into $b$,
and we can apply Corollary~\ref{conf-product}) or
$g_*$ is an Einstein metric and the right hand side is zero.

\medskip
\underline{Case 1:} $c=0$ and $b=0$. This is discussed in 
Proposition~\ref{f(t)}.
We recognize the key equation $a'' = k_*a$ from the proof of
Proposition~\ref{f(t)}.
 
\medskip
\underline{Case 2:} $c=0$ and $b\not =0$.
If $a$ is constant we have the case of Proposition~\ref{conf-product}.
If $a$ is not constant then $g_*$ is Einstein and $b\Ric_* + (m-1)\nabla_*^2b
= 0$. It follows that $\nabla_*^2b$ is also a scalar multiple of
$g_*$,
hence $\nabla_*^2b = \frac{\Delta_* b}{m}g_*$.

\medskip
\underline{Case 3:} $c\not =0$. 
If in addition $a$ is not constant then $\Ric_*$ is a multiple of $g_*$
and, consequently, $\nabla_*^2b$ is also a scalar multiple of $g_*$, hence
$\nabla_*^2b = \frac{\Delta_* b}{m}g_*$ as in Case 2.
On an Einstein space this equation can be completely and
explicitly solved, see \cite{Ku88}, \cite{KR09}.
\end{proof}
%\hfill $\Box$

%The trace of the remaining equation
%$$\overline{\Ric} = (a+b)\Delta(a+b) - n (a')^2 - n
%||\grad_*b||^2 + (n-1)(a+b)a''$$ leads to
%$$\frac{\overline{S}}{n} = (a+b)(a''+\Delta_*b) - n(a')^2 - n||\grad_*
%b||^2 + (n-1)aa''+ (n-1)ba''.$$
%Taking the derivative by $t$ leads to

\begin{cor} {\rm (Moroianu \& Ornea \cite[Thm.2.1]{MO})}\label{Moroianu}

If $M_*$ is 
%%%%%%%%%%%changes
a 
compact Riemannian manifold
and if $\overline{g} = f^{-2}(dt^2+g_*)$ is
Einstein
with $\overline{S} > 0$
and with a non-constant 
function $f(t,x)$ that is globally defined 
and never zero on $\R \times M_*$ 
then one of the following two cases occurs:
\begin{enumerate}
\item
$(M_*,g_*)$ is a round sphere,
\item
$(M_*,g_*)$ is an Einstein space with $S_* > 0$, and the function $f(t,x)$
is the $\cosh$-function on the real $t$-axis, up to constants. 
In particular $f$ does not depend on $x \in M_*$.
\end{enumerate}
\end{cor}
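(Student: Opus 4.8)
\emph{Proof (sketch).}
The plan is to invoke Corollary~\ref{f(t,x)}, which under the Einstein hypothesis lists exactly three possibilities for $f$, and to discard or sharpen each of them with the extra data: $M_*$ is compact, $f$ is globally defined and nowhere zero on $\R\times M_*$ (being continuous and nonvanishing on a connected manifold, $f$ has constant sign, and since only $f^{-2}$ enters we may assume $f>0$), and $\overline S>0$. Write $m=\dim M_*$; as $m\ge2$, the product has dimension $m+1\ge3$, so the Einstein constant $\overline\lambda=\overline S/(m+1)>0$ and the normalized scalar curvature $\overline k=\overline S/\bigl(m(m+1)\bigr)>0$ of $\overline g$ are genuine constants.

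First I would exclude the case in which $f$ depends only on $x\in M_*$. Then $f$ attains on the compact $M_*$ a positive minimum $c_1$ and a finite maximum $c_2$, so $c_2^{-2}(dt^2+g_*)\le\overline g\le c_1^{-2}(dt^2+g_*)$; being uniformly equivalent to the complete metric $dt^2+g_*$ on $\R\times M_*$, the metric $\overline g$ is itself complete. But $\overline g$ is Einstein with $\overline\lambda>0$, so the theorem of Bonnet and Myers would force $(\R\times M_*,\overline g)$ to be compact, which is absurd. I expect this completeness-plus-Myers step to be the only genuinely non-bookkeeping point, and it is exactly where the global positivity of $f$ and the sign $\overline S>0$ are needed.

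Next, if $f=a(t)+b(x)$ with $a$ and $b$ both non-constant, then case~(3) of Corollary~\ref{f(t,x)} tells us that $b$ is a non-constant solution on $M_*$ of $\nabla_*^2b=\frac{\Delta_* b}{m}g_*$, i.e.\ $(\nabla_*^2b)^\circ=0$. By Obata's theorem \cite{Ob62} --- the same mechanism already used in the proof of Corollary~\ref{compact} --- a compact Riemannian manifold carrying a non-constant function with vanishing trace-free Hessian is a round sphere; since $b$ genuinely depends on $x$ here, this yields alternative~(1).

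Finally, if $f$ depends only on $t$ we are in the situation of Proposition~\ref{f(t)} with $\epsilon=1$, so $(M_*,g_*)$ is Einstein and $f$ solves $f''=k_* f$ together with $k_* f^2-(f')^2=\overline k$. It remains to identify $f$, which is elementary ODE analysis: a non-constant everywhere-positive solution of $f''=k_* f$ on $\R$ exists only for $k_*>0$ (for $k_*<0$ the solutions are combinations of $\cos$ and $\sin$, hence sign-changing; for $k_*=0$ they are affine, hence vanish somewhere), so $k_*>0$ and therefore $S_*=m(m-1)k_*>0$. Writing $\mu=\sqrt{k_*}$ and $f=Ae^{\mu t}+Be^{-\mu t}$, the relation $k_* f^2-(f')^2=\overline k$ reduces to $4k_* AB=\overline k>0$, and combined with $f>0$ this forces $A,B>0$, whence $f(t)=2\sqrt{AB}\,\cosh\bigl(\mu(t-t_0)\bigr)$ for a suitable $t_0$. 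Thus $f$ is the $\cosh$-function up to constants and does not depend on $x$, which is alternative~(2), completing the trichotomy.
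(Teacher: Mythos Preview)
Your proof is correct and follows the paper's argument almost exactly in two of the three cases of Corollary~\ref{f(t,x)}: for $f=a(t)+b(x)$ with both non-constant you invoke Obata/Tashiro to get the round sphere, and for $f=f(t)$ you analyse the ODE pair $f''=k_*f$, $k_*f^2-(f')^2=\overline{k}$ to extract $k_*>0$ and the $\cosh$ form. The paper does the same, only with slightly different wording for the ODE step.

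The one genuine difference is how you exclude the case $f=f(x)$. The paper uses Proposition~\ref{conf-product}: the second equation there, $\|\grad f\|^2+k_*f^2+\overline{k}=0$, forces $k_*<0$ since $\overline{k}>0$ and the gradient term is non-negative; then the trace equation $\Delta_*f=-\tfrac{S_*}{n-1}f$ with $S_*<0$ makes $\Delta_*f>0$ at a positive maximum of $f$ on the compact $M_*$, contradicting the usual sign of the Laplacian at a maximum. Your route instead observes that $f$, being bounded away from $0$ and $\infty$ on $M_*$, makes $\overline{g}$ uniformly equivalent to the complete product metric, hence complete, and then Bonnet--Myers with $\overline\lambda>0$ forces compactness of $\R\times M_*$, absurd. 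Your argument is clean and global, and it avoids ever opening Proposition~\ref{conf-product}; the paper's argument is more elementary (no Myers) and yields as a by-product the sign information $k_*<0$, which they contrast with Example~\ref{example:sphere}. Both are perfectly valid here.
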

{\sc Remark:} The compactness of $M_*$ will be used only for Case 1.
In fact that assumption is not essential for Case 2, and our results above
show that the question of the authors of \cite{MO} at the end 
of Section 1 can be answered. In general the statement
of Case 1 would have to be modified, compare Corollary~\ref{new}
below. See also Example~\ref{non-standard-mercator}.
It would be even sufficient to assume that $f$ is defined
on an open set containing at least one
slice $\{t_0\} \times M_*$.

\medskip
{\it Proof.} If $f(t,x)$ depends on $t$ and on $x$ then
$(M_*,g_*)$ is a round sphere by Proposition~\ref{f(t,x)}
in combination
with the well known theorem that
the only compact pseudo-Riemannian Einstein space
admitting a non-constant solution of the equation 
$\nabla_*^2b = \frac{\Delta_* b}{n}g_*$ is the round sphere
\cite{Ob62}, \cite{Ta65}, \cite{Ka83}, \cite{Ku88}, \cite[Thm.2.8]{KR09}.

\medskip
If $f$ depends only on $t$ then by Proposition~\ref{f(t)} $g_*$ is Einstein.
Furthermore the case $\overline{S} > 0$ 
is only possible for a function of $\cosh$-type
(up to additive or multiplicative constants). This 
is a consequence of the ODE $ff''-(f')^2 = \overline{k} > 0$. 
Moreover from $f'' = k_*f$ we get $k_* > 0$. 

\medskip
If $f$ depends only on $x$ then by Proposition~\ref{conf-product}
$k_*$ is negative. Then Equation~\ref{eq:trace} implies that
$\Delta_*f$ is positive
at a positive maximum of $f$, and it is negative at a negative
minimum of $f$. Here we use that $f$ never vanishes on $M_*$.
This is a contradiction on a compact manifold $M_*$
because one of these cases must occur.
However, compare Example~\ref{example:sphere} with the opposite signs
$k_*>0$ and $\overline{k}<0$.
\hfill $\Box$

\bigskip
We can prove a similar version under a slightly different assumption:
\begin{cor} \label{final}
If $M_*$ is compact Riemannian of constant scalar curvature $S_*$
and if $\overline{g} = f^{-2}(dt^2+g_*)$ is
Einstein with a non-constant function $f(t,x)$ that is globally
defined and never zero on $\R \times M_*$, 
then one of the following two cases occurs:
\begin{enumerate}
\item
$(M_*,g_*)$ is a round sphere, and $\overline{g}$ is
of constant sectional curvature,
\item
$(M_*,g_*)$ is an Einstein space with $S_* > 0$, 
and the function $f(t,x)$
is the $\cosh$-function or the exponential function
on the real $t$-axis, up to constants.  
We have $\overline{S} > 0$ in the first case and $\overline{S} = 0$
in the second case.
%In particular $f$ does not depend on $x \in M_*$.  
%In particular $f$ does not depend on $x \in M_*$.
\end{enumerate}
\end{cor}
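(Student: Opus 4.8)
The plan is to deduce Corollary~\ref{final} from Corollary~\ref{f(t,x)} (equivalently Proposition~\ref{f(t)} and Proposition~\ref{conf-product}) together with the compactness of $M_*$, essentially mimicking the proof of Corollary~\ref{Moroianu} but replacing the hypothesis $\overline{S}>0$ by the hypothesis that $S_*$ is constant. Since $f$ is non-constant, Corollary~\ref{f(t,x)} gives three exhaustive cases: $f$ depends only on $t$, $f$ depends only on $x$, or $f(t,x)=a(t)+b(x)$ with both $a,b$ non-constant. I would treat these in turn and show that the third case cannot occur and that the first two lead to the stated dichotomy.

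First I would dispose of the mixed case $f=a(t)+b(x)$. Here Corollary~\ref{f(t,x)}(3) forces $b$ to be a non-constant solution of $\nabla_*^2 b=\frac{\Delta_* b}{m}g_*$ on the compact Einstein manifold $(M_*,g_*)$, and by the Obata--Tashiro theorem (\cite{Ob62},\cite{Ta65}, see also \cite{Ku88},\cite[Thm.2.8]{KR09}) this forces $(M_*,g_*)$ to be a round sphere with $S_*>0$. But then $a$ satisfies $a''=\epsilon k_* a+c$ with $k_*>0$, whose general solution along the gradient trajectory of $a$ is a hyperbolic cosine minus a constant (when $\epsilon=1$) or a trigonometric function (when $\epsilon=-1$); in either subcase $a(t)+b(x)$ necessarily vanishes somewhere on $\R\times M_*$ because $b$ ranges over an interval symmetric about its mean value while $a$ is unbounded below or oscillates through that range. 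This contradicts the hypothesis that $f$ never vanishes, so the mixed case is impossible. (I would note that, strictly, one only needs one slice $\{t_0\}\times M_*$ in the domain, as remarked after Corollary~\ref{Moroianu}, but compactness of $M_*$ is exactly what is available here.)

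Next, the case where $f$ depends only on $x$. By Proposition~\ref{conf-product} the relevant equation on $M_*$ is $\nabla_*^2 f=\frac{\Delta_* f}{m}g_*$ together with a trace relation (the equation labelled \ref{eq:trace} there); since $S_*$ is constant, $(M_*,g_*)$ is Einstein, and the same Obata--Tashiro argument makes $(M_*,g_*)$ a round sphere and then $\overline{g}=f^{-2}g_*$ up to the cylinder direction is of constant sectional curvature --- this is Case~(1) of the statement, where $\overline{g}$ has constant sectional curvature. Alternatively, using the maximum principle exactly as in the last paragraph of the proof of Corollary~\ref{Moroianu}: at a positive maximum of $f$ one has $\Delta_* f\le 0$, forcing $k_*\ge 0$, and then the round-sphere conclusion follows; the sign bookkeeping shows we land in Case~(1). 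Finally, the case where $f$ depends only on $t$: by Proposition~\ref{f(t)}, $(M_*,g_*)$ is Einstein and $f''=\epsilon k_* f$ with $k_*f^2-\epsilon(f')^2=\overline{k}$. Since $S_*$ is constant this is automatic; a globally defined, nowhere-zero, non-constant solution $f$ of $f''=k_*f$ on all of $\R$ exists precisely when $k_*>0$ (giving $f=\cosh$ type, with $\overline{k}=k_*f^2-(f')^2>0$) or, in the limiting linear-combination of exponentials that is purely $e^{\sqrt{k_*}\,t}$, giving the exponential solution with $\overline{k}=0$; the case $k_*\le 0$ produces either a zero of $f$ or a constant. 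This is exactly Case~(2) of the statement, with $\overline{S}>0$ for the $\cosh$-solution and $\overline{S}=0$ for the exponential one, and in both subcases $S_*>0$.

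The main obstacle I anticipate is the bookkeeping in the ``$f$ depends only on $x$'' branch: one must be careful that constancy of $S_*$ (rather than the stronger positivity of $\overline{S}$) still forces the round sphere and, crucially, that it forces $\overline{g}$ to have constant \emph{sectional} curvature and not merely to be Einstein. This is where the nowhere-vanishing hypothesis on $f$ and the compactness of $M_*$ must be combined with the explicit solution of $\nabla_*^2 f=\frac{\Delta_* f}{m}g_*$ on the sphere (a shifted first spherical harmonic), after which the warped-product metric $f^{-2}(dt^2+g_*)$ is seen to be a space form by the classical Obata computation. The other cases are comparatively routine consequences of Propositions~\ref{f(t)} and~\ref{conf-product} plus elementary ODE analysis on $\R$.
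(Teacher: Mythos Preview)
Your case split from Corollary~\ref{f(t,x)} is correct, but the routing of the three cases to the two conclusions of the corollary is wrong in two places.

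First, the mixed case $f(t,x)=a(t)+b(x)$ is \emph{not} eliminated by a zero argument. With $k_*=1$ and $c=0$ one has $a''=a$, so $a(t)=A\cosh t+B\sinh t$, while $b=\cos r$ on the round sphere; for instance $f=2\cosh t+\cos r\ge 1>0$ everywhere on $\R\times S^m$. So your claim that ``$a(t)+b(x)$ necessarily vanishes somewhere'' is false. In the paper this mixed case is precisely where Case~(1) comes from: once Obata--Tashiro forces $M_*$ to be a round sphere, the product $\R\times S^m$ with $dt^2+g_*$ is conformally flat (it is a round tube over a straight line in Euclidean space), hence the Einstein metric $\overline g=f^{-2}(dt^2+g_*)$ has constant sectional curvature (compare Example~\ref{flatexample}).

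Second, in the branch ``$f$ depends only on $x$'' you invoke the wrong equation. Proposition~\ref{conf-product} gives $\varphi\,\Ric_*+(n-1)\nabla_*^2\varphi=0$, not $\nabla_*^2\varphi=\frac{\Delta_*\varphi}{m}g_*$; since $g_*$ is only assumed to have constant scalar curvature (not to be Einstein), you cannot pass to the Obata equation and your round-sphere argument does not start. The paper uses only the \emph{trace} (Equation~\ref{eq:trace}): $\Delta_*\varphi=-\frac{S_*}{n-1}\varphi$. On a compact Riemannian $M_*$ a non-constant eigenfunction forces $S_*>0$ and satisfies $\int_{M_*}\varphi=0$, contradicting the hypothesis that $\varphi$ never vanishes. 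Thus this branch is \emph{impossible}; it does not produce Case~(1).

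Your treatment of the branch ``$f$ depends only on $t$'' (leading to Case~(2) with the $\cosh$ or exponential solution and $S_*>0$) agrees with the paper's argument.
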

{\it Proof.}
The first case is the same as in Corollary~\ref{Moroianu}: 
If $f$ depends on $t$ and on $x$
then $(M_*,g_*)$ is a round sphere.
Therefore the product metric on $\R \times M_*$
is locally conformally flat. It can even be realized as a
hypersurface in Euclidean space, namely, as a tube around a straight line.
Therefore $\overline{g}$ is of constant sectional curvature, compare
Example~\ref{flatexample}.

\medskip
In the second case ($f$ depends only on $t$)
we have to find all never vanishing global 
solutions of the ODE $ff'' -(f')^2 = \overline{k}$.
These are the cases of the exponential function (up to constants)
if $\overline{k} = 0$ and hence $S_* > 0$ or
the $\cosh$-function (up to constants) if $\overline{k} > 0$ 
and hence $S_* > 0$. The case $\overline{k} < 0$ cannot occur.

\medskip
In the third case ($f$ depends only on $x$) 
we see from Equation~\ref{eq:trace} that $\varphi$ is
an eigenfunction of the Laplace operator on $(M_*,g_*)$
for the eigenvalue $-S_*/(n-1)$. This implies $S_* > 0$ and
$$\int_{M_*}\varphi = -\frac{n-1}{S_*}\int_{M_*}\Delta_*\varphi = 0.$$
This is a contradiction because by assumption $\varphi$ has no zeros.
\hfill $\Box$

\begin{exa}\rm \label{non-standard-mercator}
A concrete example for Corollary~\ref{Moroianu} and
Corollary~\ref{final} is the following:
Let $(M_*,g_*)$ be a compact Einstein space with a positive
Einstein constant and normalized scalar curvature $k_* = 1$, 
e.g., a complex projective space.
Then the metric $\overline{g} = (\cosh t)^{-2}(dt^2+g_*)$ is Einstein with 
normalized scalar curvature $\overline{k} = 1$. This is not of
constant curvature if $g_*$ is not of constant curvature.
The resulting metric $\overline{g}$ coincides with the
metric of the classical Mercator projection
if $g_*$ is the metric of the unit sphere, compare Example~\ref{Mercator}.
The other cases can be called a {\it non-standard Mercator projection}
on the product $\R \times M_*$. This is particularly interesting if
$M_*$ is a sphere and $g_*$ is a non-standard Einstein metric
(i.e., not of constant sectional curvature), see \cite{Bm98},
\cite{BGK}. 
In these cases
a compactification by two points (north and south pole) is
homeomorphic with the sphere $S^n$, and it is Einstein except at
these two points which are metrical singularities.

\end{exa}

Without the assumption of compactness we have the following:
\begin{cor}\label{new} 
If $\overline{g} = f^{-2}(dt^2+g_*)$ is an $n$-dimensional 
Riemannian, complete and Einstein metric 
with a non-constant function $f(t,x)$ that is globally
defined and never zero on $I \times M_*$ with an open interval $I \subseteq
\R$, 
then one of the following two cases occurs:
\begin{enumerate}
%\item
%$(M_*,g_*)$ is a round sphere,
\item 
$f$ depends only on $x$ (the case of Corollary~\ref{conf-product}). 
\item
$(M_*,g_*)$ is a complete Einstein space with $S_* \leq 0$, 
and the $f$ depends only on $t$ (possibly
after a 
%chance
%%%%%%%%%%%%changes
change of variables). Moreover, $f$
is either $f(t) = \alpha \cos t + \beta \sin t$ on a bounded interval
$I$, or $f(t) = \alpha t + \beta$ on an unbounded 
interval $I$, each time with constants $\alpha, \beta$.  
We have $\overline{S}, S_* < 0$ in the first case and $\overline{S}
<0, S_* = 0$ in the second case.
\end{enumerate}
Particular cases are the hyperbolic metric $\overline{g}$  in the form
$\overline{g} = \sin^{-2}t(dt^2 + g_{-1})$ (compare Example~\ref{Mercator})
or $\overline{g} = t^{-2}(dt^2 + g_0)$ (the Poincar\'{e} halfspace).
\end{cor}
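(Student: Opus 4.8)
The plan is to reduce at once to Corollary~\ref{f(t,x)} (applied with $\epsilon=1$ and $\dim M_*=n-1$; throughout we assume $n\geq3$, so that this corollary applies and $k_*$ is defined). That result already splits the situation into three cases: $f$ depends only on $x$; $f$ depends only on $t$; or $f(t,x)=a(t)+b(x)$ with $a,b$ non-constant, $(M_*,g_*)$ Einstein with constant normalized scalar curvature $k_*$, $a''=k_*a+c$, and $\nabla_*^2b=\frac{\Delta_*b}{n-1}\,g_*$. The first of these is exactly alternative~(1) of the present corollary, so nothing more is needed there; the task is to bring in completeness of $\overline{g}$ and the non-vanishing of $f$ to handle the other two. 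The elementary device used throughout is: a curve in $I\times M_*$ of finite $\overline{g}$-length is Cauchy, hence converges in $(I\times M_*,\overline{g})$, so its coordinates cannot run off --- no coordinate may tend to an open endpoint of $I$, and the $M_*$-component may not leave every compact set. Two specializations: (a) on a radial curve $t\mapsto(t,x_0)$, if $f(\cdot,x_0)$ stays bounded away from $0$ as $t$ approaches an open endpoint $d$ of $I$ (automatic if $d$ is finite, and also if $d=\pm\infty$ with $f\to\infty$ there), the curve has finite length --- a contradiction; (b) along a unit-speed $g_*$-gradient trajectory of $b$ at a fixed $t_0$, along which $b$ obeys $b''=-k_*b+c_*$ by Corollary~\ref{Brinkmann}, the alternative ``$b\to\pm\infty$'' forces either $f<0$ somewhere (contradicting $f>0$) or such an escaping finite-length curve; (b) also yields that $(M_*,g_*)$ is complete.

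Suppose $f=f(t)$. By Proposition~\ref{f(t)}, $(M_*,g_*)$ is Einstein and $f''=k_*f$, $k_*f^2-(f')^2=\overline{k}$; since the $t$-lines are geodesics of the warped product $\overline{g}$, completeness forces $\int^d f^{-1}\,dt=\infty$ at each open endpoint $d$ of $I$. If $k_*>0$, then $f=Ae^{\sqrt{k_*}\,t}+Be^{-\sqrt{k_*}\,t}$ has at most one zero; a finite endpoint of $I$ must be a zero of $f$, while an infinite endpoint requires $f$ to decay exponentially there and hence to have no zero --- and no interval $I$ satisfies both constraints. Thus $k_*\leq0$, i.e.\ $S_*\leq0$. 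If $k_*=0$, then $f=\alpha t+\beta$ with $\alpha\neq0$ (as $f$ is non-constant), $\overline{k}=-\alpha^2<0$, and, $\int f^{-1}\,dt$ diverging at the unique zero of $f$ and also at $\pm\infty$, completeness forces $I$ to be the (unbounded) half-line on which $f>0$: the second alternative of~(2), with $\overline{S}<0$, $S_*=0$. If $k_*<0$, rescale $t$ so $k_*=-1$; then $f=\alpha\cos t+\beta\sin t$, whose consecutive zeros are $\pi$ apart, so $I$ --- connected and avoiding the zeros --- is bounded, and completeness forces it to be a full interval between two consecutive zeros; here $\overline{k}=-(\alpha^2+\beta^2)<0$, so $\overline{S}<0$, $S_*<0$: the first alternative of~(2). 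The models $\overline{g}=\sin^{-2}t\,(dt^2+g_{-1})$ and $\overline{g}=t^{-2}(dt^2+g_0)$ of Example~\ref{Mercator} realize these two alternatives.

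It remains to exclude $f=a(t)+b(x)$ with both summands non-constant, and this I expect to be the main obstacle. From $f>0$ one gets that $a$ is bounded below on $I$ and $b$ bounded below on $M_*$, and --- choosing a slice $\{t_0\}\times M_*$ on which $f$ is bounded away from $0$, then applying device~(b) --- that $(M_*,g_*)$ is complete; the delicate point is to exclude the degenerate possibility that $f$ drops to $0$ on every slice, which should follow from the explicit shape of $b$. Granting this, one appeals to the explicit solutions of $a''=k_*a+c$ and $\nabla_*^2b=\frac{\Delta_*b}{n-1}\,g_*$ (Corollary~\ref{Brinkmann}, \cite{KR09}): along the gradient trajectories of $b$ one has $b''=-k_*b+c_*$ (note the reversed sign of $k_*$), so exactly one of $a,b$ is of exponential type unless $k_*=0$, when both are affine or quadratic. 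If $k_*>0$, $a$ is of exponential type and device~(a) at the bad endpoint of $I$ gives a contradiction; if $k_*<0$, $b$ is of exponential type and unbounded along a complete gradient trajectory, and device~(b) applies; if $k_*=0$ with $c\neq0$, $a$ is a genuine quadratic tending to $\pm\infty$ at the endpoints of $I$, giving $f<0$ on one side and an incomplete end on the other; if $k_*=0$ with $c=0$, then $a=\mu t+\nu$ with $\mu\neq0$, so the unique zero of $f(\cdot,x_0)$ sits at $t=-(\nu+b(x_0))/\mu$, and completeness would force this to be the same finite endpoint of $I$ for every $x_0$, which is absurd since $b$ is non-constant. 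In every subcase $f=a+b$ is impossible, so only alternatives~(1) and~(2) survive --- which is the assertion.
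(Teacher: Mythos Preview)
Your proof follows the same skeleton as the paper's: invoke Corollary~\ref{f(t,x)} to split into the three cases $f=f(x)$, $f=f(t)$, and $f=a(t)+b(x)$ with both summands non-constant, then use completeness of $\overline{g}$ together with the explicit ODEs for $a$ and $b$ to dispose of the subcases according to the sign of $k_*$. Your finite-length-curve devices~(a) and~(b) make explicit what the paper leaves terse (``$\overline{g}$ cannot be complete in the $t$-direction by the growth of $a$ at infinity'', etc.).

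The one genuine divergence is the subcase $k_*=0$ with $a$ and $b$ both affine. The paper does \emph{not} exclude this: it observes that $f=\mu t+\lambda s+\mathrm{const}$ is linear in the flat $(t,s)$-plane, rotates coordinates so that $f$ depends on a single new variable $u$, and declares the result to fall under case~(2) --- this is precisely the reason for the parenthetical ``possibly after a change of variables'' in the statement. You instead show this subcase is vacuous: for each fixed $x_0$ the unique zero of $t\mapsto f(t,x_0)$ would have to coincide with a fixed finite endpoint of $I$, which is impossible since $b$ is non-constant (and $I=\R$ is ruled out since $f$ would then change sign). This is correct and in fact sharper than the paper's handling; it shows the change of variables is never actually needed. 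One minor slip: in the $k_*=0$, $c\neq 0$ subcase your phrasing ``$f<0$ on one side and an incomplete end on the other'' is not quite right --- for $c>0$ the quadratic $a$ tends to $+\infty$ at any infinite end of $I$, giving finite $\overline{g}$-length there, while for $c<0$ positivity of $f$ forces $I$ to be bounded and then your finite-endpoint argument applies. Either way the conclusion stands.
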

{\it Proof.}
If $f(t,x) = a(t) + b(x)$ depends on both arguments 
with nonconstant functions $a$ and $b$ then 
$(M_*,g_*)$ is complete Einstein with $\nabla_*^2b = \frac{\Delta_* b}{n-1}g_*$
and simultaneously $a'' = k_* a + c$. If in addition $k_* > 0$ then
the solution $a(t)$ is a hyperbolic function. 
Simultaneously $b$ is bounded. Therefore
$\overline{g}$ cannot be complete in the $t$-direction by the growth of $a$
at infinity.
The case of the Mercator projection shows that $\overline{g}$ can be
extendible to a complete (even compact) manifold.
If $k_* < 0$ then $a$ is bounded and
$b$ is a hyperbolic function on $M_*$ which implies
that $\overline{g}$ cannot be complete.
If $k_* = 0$ then $a$ is linear and $b$ is either 
linear or quadratic with a zero. In the latter case 
$\overline{g}$ is not complete. 
It remains the case that $a$ is linear in $t$ and $b$ is linear
in a variable $u$ such that $g_*$ is a Ricci flat 
warped product $g_* = ds^2 + b^2(s)g_{**}$ where $g_{**}$ is again
Ricci flat.
But then $f = a + b$ is linear in the $(t,s)$-plane,
and we can regard $g = dt^2 + g_*$ as a metric of type
$g = du^2 + dv^2 + g_{**}(u)$ such that
$\overline{g} = u^{-2}g$. This is part of case (2)
with an unbounded interval $I$.

\medskip
If $f$ depends only on $t$ then we are in the position of
Proposition~\ref{f(t)}. From the list in the proof there
we see that $\overline{g}$ cannot be complete unless
$f$ is linear of $f(t)= \sin t$ or $f(t) = \cos t$ or a linear combination.
Since $f$ has a zero in any case, the interval $I$ 
is unbounded in the linear case but must be bounded in the other case.
This is part of Case (2).
%\medskip
%If $f$ depends only on $x\in M_*$
%then we have three cases:
%Case 1: $S_* \equiv 0$, $||\grad f||$ is constant, and $f$ is
%an affine function with a parallel gradient, and $g_*$ is
%Ricci flat. But then $\overline{g}$ is not complete, contradictin
%our assumption. 
%Case 2: $S_*$  
%Case 3: $S_*$ 
\hfill $\Box$

\begin{exa}\rm (a compact example)

Let $(M,g)= (S^n,g)$ be the unit sphere with normalized scalar curvature
$k=1$.
Take a second copy $(M_*,g_*) = (S^n,-g)$ with a negative definite metric
and with normalized scalar curvature $k_* =-1$.
On $M$ the function $a(t) = \cos t$ satifies the equation
$\nabla^2 a = -kag = -ag$, on $M_*$ the function 
$b(s) = \cos s $
satisfies the equation $\nabla_*b = -k_*bg_* = bg_*$.
By the results of Theorem~\ref{pseudoriem}
the metric $\overline{g} = (a+b)^{-2}(g + g_*)$ defines an Einstein
space whenever $a+b \not = 0$. The Einstein constant is
$$\overline{\lambda} = (n-1)(a+b)^2 + 
(2n-2)(a+b)(-a) + (a+b)n(b-a)-(2n-1)(a'^2-b'^2)
$$
$$= (a+b)(-(2n-1)(b-a)) -(2n-1)(a'^2-b'^2) = (2n-1)(-(a^2+a'^2) +
(b^2+b'^2)) = 0.$$
Unfortunately the function $f(t,s)=a(t)+b(s)$ has zeros. This means
that $M\times M_*$ ist not globally conformally Einstein. It would not
help to introduce an additive constant $c$ here because then we would have
$a(t)=\cos t +c$ and $b(s)=\cos s -c$.
\end{exa}

\begin{exa} \rm
Here is an example of a complete Riemannian manifold
$$M = \R \times \R \times \widetilde{M} = \R \times M_*$$ admitting a global
solution $f = a(t) + b(x)$ where both $a,b$ are non-constant and never zero.
Unfortunately $f$ has zeros.
Let $(\widetilde{M},\widetilde{g})$ be a complete Ricci flat
manifold of dimension $n-1$, and let
$$a(t) = \cos t - 2, \quad b(s) = e^s + 2.$$
Then on $M$ the function 
$$f(t,s,\widetilde{x}) = a(t) + b(s)$$
satisfies all conditions above:
The metric $g_* = ds^2 + e^{2s}\widetilde{g}$ on $M_* = \R \times
\widetilde{M}$ is a complete Einstein space with $k_* = -1$
\cite{Ku88} and $a,b$ satisfy the equations
$$a'' = k_*a - 2, \quad b'' = -k_*b - 2, \quad \nabla_*^2b = (-k_*b -2)g_*,
\quad c=-2.$$
By the results above the metric $\overline{g} = f^{-2}\big(dt^2 + ds^2 +
e^{2s}\widetilde{g}\big)$ on $M = \R \times M_*$ 
is Einstein with $\overline{k} = -1$ whenever $f(t,s) = \cos t + e^s \not = 0$.
If $\widetilde{g}$ is not flat then $g_*$ is not of constant curvature
and, consequently, $\overline{g}$ is not of constant curvature.

\end{exa}

\begin{exa} \rm
A similar example starts with an $(n-1)$-dimensional Einstein space
$(\widetilde{M},\widetilde{g})$ with normalized scalar curvature
$-1$. Then $M_* = \R \times \widetilde{M}$ with the
warped product metric $g_* = ds^2 + \cosh^2(s)\widetilde{g}$ is also Einstein
with $k_* = -1$ \cite{Ku88}. Let
$$a(t) = \cos t, \quad b(s) = \cosh s.$$
Then on $M = \R \times M_*$ the function 
$$f(t,s,\widetilde{x}) = a(t) + b(s)$$
satisfies all conditions above since $a,b$ satisfy the equations
$$a'' = k_*a, \quad b'' = -k_*b, \quad \nabla_*^2b = (-k_*b)g_*,
\quad c=0.$$
By the results above the metric $\overline{g} = f^{-2}\big(dt^2 + ds^2 +
\cosh^2(s)\widetilde{g}\big)$ on $M$ 
is Ricci flat whenever $f(t,s) = \cos t + \cosh s \not = 0$.
This is the case at least for all $s\not = 0$,
If $\widetilde{g}$ is not hyperbolic then $g_*$ is not of constant curvature
and, consequently, $\overline{g}$ is not of constant curvature.

\end{exa}
\begin{exa} \label{flatexample} \rm
Here is an example with a compact $M_*$ and a function depending
on two parameters:
Let $(M_*,g_*)$ denote the unit $n$-sphere in polar coordinates\linebreak
$g_* = dr^2 + \sin^2(r)\widetilde{g}$ with the unit $(n-1)$-sphere
$(\widetilde{M},\widetilde{g})$.
This admits the function $b(r,\widetilde{x}) = \cos r$ satisfying $\nabla_*^2b =
-bg_*$.
Then on $M = \R \times M_*$ the function 
$$f(t,r,\widetilde{x}) = a(t) + b(r,\widetilde{x}) = \cosh t + \cos r$$
satisfies all conditions above:
The metric $g_*$ is a complete Einstein space with $k_* = 1$,
and $a,b$ satisfy the equations
$$a'' = k_*a, \quad b'' = -k_*b, \quad \nabla_*^2b = (-k_*b)g_*,
\quad c=0.$$
By the results above the metric $\overline{g} = f^{-2}\big(dt^2 + dr^2 +
\sin^2(r)\widetilde{g}\big)$ on $M = \R \times M_*$ is
%Einstein with Einstein constant $\overline{\lambda} = 0$
Ricci flat whenever $f(t,s) \not = 0$. This is the case for
$t \not = 0$ or $r \not = \pi$, i.e. on $(\R \times M_*) \setminus
\{(0,p)\}$ for a point
$p$. $\overline{g}$ is even flat according to Corollary~\ref{final}.
%It seems that $\overline{g}$ is not of constant curvature.
Compare the hyperbolic metric as a warped product
$dt^2 + \cosh^2(t)\big(dr^2 + \sinh^2(r)\widetilde{g}\big).$
\end{exa}
%\begin{cor}\label{final}
%Let $(M_*,g_*)$ be compact Riemannian of dimension $m \geq 2$, 
%and let $f(t,x) = a(t) + b(x)$ be a function
%with nonconstant $a$ and $b$ such that $\overline{g} = f^{-2}(dt^2+g_*)$
%is Einstein. Then $(M_*,g_*)$ is isometric with a round sphere,
%and consequently the product $\R\times M_*$ is locally conformally flat.
%If $\overline{g}$ is Ricci flat then it is flat.
%\end{cor}

%{\it Proof.} The proof follows from Corollary~\ref{f(t,x)}.
%The round sphere is the only compact Einstein space that admits
%a nonconstant solution $b$ of $\nabla^2b = \frac{\Delta b}{m}$.

\section{Conformally Einstein products of type
    $\R \times M_*$}

In this section we come back to some particular cases that were not
solved so far. In Proposition \ref{f(t)} we discussed the case of
a function $f$ depending on one real parameter. Here we deal with
the opposite case:
A function depending on the $(n-1)$-dimensional factor $M_*$ 
of the product $\R \times M_*$.
This leads to equations involving the Hessian of the function and
the Ricci tensor. If one of them (and, consequently, the other as well) is
a multiple of the metric, then we are in the position of 
%%%%%%%%%%%%%%%changes
Theorem \ref{Brinkmann}. The question is whether there are more
solutions. In fact they are: We construct iterated warped products
of this kind by solving a nonlinear ODE of third order.

%\pagebreak
\begin{pro} \label{conf-product}
{\rm (The type
    $\R \times M_*$ with an $(n-1)$-dimensional base)}

If $g$ has the form $g = \pm dt^2 + g_*$ on $M = \R \times M_*$ and if 
$\varphi$ is a never vanishing function defined on the $n$-dimensional manifold
$(M_*,g_*)$ then $\overline{g}= \varphi^{-2}g$ is an
Einstein metric if and only if the two equations
\begin{eqnarray}
\label{eq:conf-Ein}
\varphi \cdot \Ric_* + (n-1) \cdot \nabla^2_*\varphi&=&0 \\
\|\grad \varphi\|^2 + \varphi^2k_* + {\overline{k}}&=&0
\end{eqnarray}
are satisfied with a constant $\overline{k}$. 
Here $k,k_*, \overline{k}$ denote the normalized
scalar curvatures of the metrics $g,g_*,\overline{g}$.
The trace of the first equation is the following:
\begin{equation}
\label{eq:trace}
\varphi \cdot S_* + (n-1) \cdot \Delta_*\varphi
=0  
\end{equation}
\end{pro}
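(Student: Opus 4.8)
The plan is to apply Lemma~\ref{lem:ric} to the conformal change $g\mapsto\overline{g}=\varphi^{-2}g$ on the $(n+1)$-dimensional product $M=\R\times M_*$ and to use that $\varphi$ is constant along the $\R$-factor. Since $g=\pm dt^2+g_*$ is a metric product and $\partial_t$ is parallel, both $\Ric$ and $\nabla^2\varphi$ are block-diagonal: $\Ric$ has vanishing $\partial_t$-block and $M_*$-block $\Ric_*$, while $\nabla^2\varphi$ has vanishing $\partial_t$-block and $M_*$-block $\nabla_*^2\varphi$; moreover $\Delta\varphi=\Delta_*\varphi$ and $\|\grad\varphi\|^2=\|\grad_*\varphi\|_{g_*}^2$. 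Replacing $n$ by $n+1$ in Equation~\ref{eq:ric} then gives
$$\overline{\Ric}=\Ric+(n-1)\varphi^{-1}\nabla^2\varphi+\big[\varphi^{-1}\Delta_*\varphi-n\varphi^{-2}\|\grad\varphi\|^2\big]g,$$
in which every summand is block-diagonal.

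Next I would impose the Einstein condition $\overline{\Ric}=\overline{\lambda}\,\overline{g}=\overline{\lambda}\varphi^{-2}g$ and split it into its two blocks (the components mixing $\partial_t$ and $M_*$-directions vanish automatically). Evaluating on $(\partial_t,\partial_t)$ and cancelling the harmless factor $\pm1$ together with the common $\varphi^{-2}$ yields the scalar identity $\overline{\lambda}=\varphi\Delta_*\varphi-n\|\grad\varphi\|^2$. Substituting this back into the $M_*$-block cancels the two copies of $\overline{\lambda}\varphi^{-2}g_*$ and leaves exactly $\varphi\Ric_*+(n-1)\nabla_*^2\varphi=0$, i.e.\ Equation~\ref{eq:conf-Ein}; its trace over $M_*$ is Equation~\ref{eq:trace}, $\varphi S_*+(n-1)\Delta_*\varphi=0$. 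Here $\overline{\lambda}$ is constant because $\overline{g}$ is Einstein. Using $S_*=n(n-1)k_*$ the trace equation reads $\Delta_*\varphi=-nk_*\varphi$; inserting this into $\overline{\lambda}=\varphi\Delta_*\varphi-n\|\grad\varphi\|^2$ and using $\overline{\lambda}=n\overline{k}$ gives $n\overline{k}=-nk_*\varphi^2-n\|\grad\varphi\|^2$, i.e.\ the second displayed equation $\|\grad\varphi\|^2+k_*\varphi^2+\overline{k}=0$. For the converse I would run these steps backwards: given the two equations with $\overline{k}$ constant, the trace of Equation~\ref{eq:conf-Ein} gives $\Delta_*\varphi=-nk_*\varphi$, hence $\overline{\lambda}:=\varphi\Delta_*\varphi-n\|\grad\varphi\|^2=-n(k_*\varphi^2+\|\grad\varphi\|^2)=n\overline{k}$ is constant, and plugging back into the displayed formula for $\overline{\Ric}$ shows that both blocks of $\overline{\Ric}-\overline{\lambda}\varphi^{-2}g$ vanish.

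I do not expect a genuine obstacle here; the argument is essentially bookkeeping with Equation~\ref{eq:ric}. The points that need care are the dimension shift ($M_*$ is $n$-dimensional but the conformal change takes place on the $(n+1)$-dimensional $M$, which is why the coefficient in Lemma~\ref{lem:ric} becomes $(n+1)-2=n-1$), the normalization identities $S_*=n(n-1)k_*$ and $\overline{\lambda}=n\overline{k}$, and the observation that the sign in $\pm dt^2$ drops out of the $\partial_t$-block equation, so the statement holds uniformly in signature. It is also worth noting that Equation~\ref{eq:conf-Ein} does not force $\Ric_*$ to be a multiple of $g_*$, which is exactly what leaves room for the iterated-warped-product solutions announced in the introduction to this section.
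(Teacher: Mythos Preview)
Your proof is correct and follows essentially the same approach as the paper: both use the block-diagonal structure of $\Ric$ and $\nabla^2\varphi$ on the product together with Equation~\ref{eq:ric} (with the dimension shift $n\mapsto n+1$), read off Equation~\ref{eq:conf-Ein} from the $M_*$-block, and then derive the second equation from the trace identity $\overline{\lambda}=\varphi\Delta_*\varphi-n\|\grad\varphi\|^2$ combined with $\Delta_*\varphi=-nk_*\varphi$. Your treatment is in fact slightly more detailed than the paper's, since you also spell out the converse direction and the role of the sign $\pm$.
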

{\it Proof.} The first equation follows directly 
from Equation~\ref{eq:ric} for the $(n+1)$-dimensional manifold $M$
by the block matrix structure of $\Ric$ and $\nabla^2\varphi$
\[\Ric = \left( \begin{array}{cc}
0&0\\0&\Ric_* \end{array} \right), \quad \nabla^2 \varphi = \left( \begin{array}{cc}
0&0\\0&\nabla_*^2 \varphi \end{array} \right)\] 
and 
$\overline{\Ric}= n\overline{k}\overline{g}$.
Then the trace of Equation~\ref{eq:ric} leads to the equation
$$\overline{S} = (n+1)\big[\varphi \Delta_*\varphi - n ||\grad_* \varphi||^2\big].$$
By inserting Equation~\ref{eq:trace} and $\overline{S} =
n(n+1)\overline{k}, \ S_* = n(n-1)k_*$ we obtain the equation
$\overline{k} = -\varphi^2k_* - ||\grad_*\varphi||^2.$
\hfill $\Box$

\begin{exa}
\label{example:sphere}
\rm
A standard example for Corollary~\ref{conf-product}
is the unit sphere $(M_*,g_*)$
with a linear height function $\varphi(r) = \cos r$ in polar coordinates.
Here we have the equations 
$\Ric_*= (n-1)g_*, \ \nabla_*^2\varphi = -\varphi g_*$
and $||\grad_*\varphi||^2 = \sin^2 r, 
\ k_* = 1, \ \overline{k} = -1$.
However, the conformally transformed metric $\overline{g}$ is only
defined on the part of $\R \times S^n$ where $\cos r \not = 0$
(as in the case of the Mercator projection).
\end{exa}
\begin{cor} {\rm (Corvino \cite[Prop.2.7]{Co00})}
\label{Corvino}

With the same notations as in Corollary~\ref{conf-product} and with
a function $f$ on $M_*$ the metric $\overline{g}= \pm f^2 dt^2 + g_*$ is Einstein 
if and only if the equation
\begin{equation}
\label{eq:Corvino}
f \cdot \Ric_* - \nabla^2_*f + \Delta_* f \cdot g_* = 0
\end{equation}
holds. Its trace 
\begin{equation}
\label{eq:tracef}
f \cdot S_* + (n-1) \cdot \Delta_*f
=0,
\end{equation}
coincides with Equation~\ref{eq:trace}.

\end{cor}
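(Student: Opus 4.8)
The plan is to reduce the assertion to Proposition~\ref{conf-product} by a conformal rescaling, and then translate the resulting equation back into the geometry of $(M_*,g_*)$. Writing
\[ \overline{g} = \pm f^2\,dt^2 + g_* = f^2\big(\pm dt^2 + f^{-2}g_*\big), \]
we see that $\overline{g} = (1/f)^{-2}\big(\pm dt^2 + \widehat{g}_*\big)$, where $\widehat{g}_* := f^{-2}g_*$ is a (conformally rescaled) metric on $M_*$ and the conformal factor $1/f$ is a never vanishing function defined on $M_*$ alone. Hence Proposition~\ref{conf-product} applies verbatim, with its $g_*$ replaced by $\widehat{g}_*$ and its $\varphi$ replaced by $\widehat{\varphi}:=1/f$: the metric $\overline{g}$ is Einstein if and only if $\widehat{\varphi}\cdot\widehat{\Ric}_* + (n-1)\cdot\widehat{\nabla}_*^2\widehat{\varphi} = 0$, the companion scalar equation in that proposition merely recording the value of $\overline{k}$ (since the block structure makes $\overline{\Ric}$ a functional multiple of $\overline{g}$ and $\dim(\R\times M_*)=n+1\geq 3$, the Einstein factor is automatically constant).

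It then remains to rewrite this equation through $\Ric_*$, $\nabla_*^2$ and $g_*$. First I would feed the conformal change $\widehat{g}_* = f^{-2}g_*$ on the $n$-manifold $M_*$ into Equation~\ref{eq:ric} of Lemma~\ref{lem:ric}, obtaining
\[ \widehat{\Ric}_* = \Ric_* + f^{-2}\Big((n-2)f\,\nabla_*^2 f + \big[f\,\Delta_* f - (n-1)\|\grad f\|^2\big]g_*\Big). \]
Second, using the connection change $\widehat{\nabla}_XY - \nabla_XY = -X(\log f)Y - Y(\log f)X + g_*(X,Y)\grad(\log f)$ recorded after Lemma~\ref{lem:ric}, together with Equation~\ref{eq:c-power} applied to $\widehat{\varphi}=f^{-1}$, one finds that the $d f\otimes d f$ contributions cancel, leaving
\[ \widehat{\nabla}_*^2\widehat{\varphi} = -f^{-2}\nabla_*^2 f + f^{-3}\|\grad f\|^2 g_*. \]
Substituting these two identities and $\widehat{\varphi}=f^{-1}$ into the Einstein equation above, the terms in $\|\grad f\|^2$ cancel and the second-order terms combine with coefficient $-1$; after multiplying by $f^2$ one is left with exactly Equation~\ref{eq:Corvino}, namely $f\,\Ric_* - \nabla_*^2 f + \Delta_* f\cdot g_* = 0$. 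Conversely, when this holds one reads off $\overline{\Ric}=\overline{\lambda}\,\overline{g}$ with $\overline{\lambda}=-\Delta_* f/f$, which is then constant because $n+1\geq 3$.

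Finally, for the trace statement I would take the trace of Equation~\ref{eq:Corvino} with respect to $g_*$ over the $n$-dimensional factor $M_*$: since $\Ric_*$ has trace $S_*$, $\nabla_*^2 f$ has trace $\Delta_* f$, and $\Delta_* f\cdot g_*$ has trace $n\,\Delta_* f$, this gives $f\,S_* + (n-1)\,\Delta_* f = 0$, i.e.\ Equation~\ref{eq:tracef}, which is formally identical to Equation~\ref{eq:trace} of Proposition~\ref{conf-product} with $f$ in the role of $\varphi$ (although $f$ enters here as a warping function rather than as a conformal factor). A more direct alternative is to apply the O'Neill curvature formulas to the warped product $M_*\times_f\R$ with one-dimensional flat fibre, which give $\overline{\Ric}|_{M_*} = \Ric_* - f^{-1}\nabla_*^2 f$, $\overline{\Ric}(\partial_t,\partial_t) = -f^{-1}\Delta_* f\cdot g(\partial_t,\partial_t)$ and vanishing mixed components; eliminating the Einstein constant between the two diagonal blocks again yields Equation~\ref{eq:Corvino}. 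Either way, the only genuine computation—and hence the point to get right—is the bookkeeping of the conformal (equivalently, warped-product) transformation of the Hessian and the Ricci tensor: one must verify that all powers $f^{-k}$, the $\|\grad f\|^2$ terms, and the $df\otimes df$ terms cancel so as to leave the clean first-order expression in Equation~\ref{eq:Corvino}; the rest of the argument is purely structural.
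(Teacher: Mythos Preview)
Your proof is correct and follows essentially the same route as the paper: both factor $\overline{g} = f^2(\pm dt^2 + f^{-2}g_*)$ and then track the conformal change of the Ricci tensor and Hessian from $g_*$ to $\widehat g_* = f^{-2}g_*$, arriving at Equation~\ref{eq:Corvino} after the $df\otimes df$ and $\|\grad f\|^2$ terms cancel. The only organizational difference is that you invoke Proposition~\ref{conf-product} as a black box for the Einstein condition on $\pm dt^2 + \widehat g_*$, whereas the paper re-applies Equation~\ref{eq:ric} directly to the product; your packaging is slightly cleaner, and your closing remark about the O'Neill formulas as an alternative derivation is a worthwhile addition.
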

{\it Proof.}
The metric $\overline{g}$ is a warped product metric
with an $n$-dimensional base $(M_*,g_*)$ and a 1-dimensional fibre.
It can be regarded as a conformal transformation of the product metric
$g = \pm dt^2 + f^{-2}g_*$. In this way Equation~\ref{eq:Corvino} follows
from Equation~\ref{eq:ric}:
For the metric $\widehat{g} = f^{-2}g_*$ we have
$$f \cdot \widehat{\nabla}^2 f = f \cdot \nabla_*^2 f- ||\grad_* f||^2 g_* +
2df \otimes df$$
and
$$\widehat{\Ric} = \Ric_* + \frac{n-2}{f}\nabla^2_* f + 
\big[f \Delta_*f - (n-1) ||\grad_* f||^2\big] \cdot \widehat{g}$$ 
Each quantitiy for the metric $g$ is obtained from the same quantity 
for $\widehat{g}$ as a block matrix with a $0$ entry in the $t$-component:
\[\Ric = \left( \begin{array}{cc}
0&0\\0&\widehat{\Ric} \end{array} \right), \quad \nabla^2 f = \left( \begin{array}{cc}
0&0\\0&\widehat{\nabla}^2 f \end{array} \right)\] 
For the conformally transformed metric $\overline{g} = f^2 g =
f^2(\pm dt^2 + \widehat{g})$ we obtain
$$\overline{\Ric} = \Ric + (n-1)f\nabla^2 f^{-1} + 
\big[f^{-1} \Delta f^{-1} - n ||\grad f^{-1}||^2\big] \cdot \overline{g}$$
By combining this with the equation
$$\nabla^2f^{-1} = 2f^{-3} df \otimes df - f^{-2}\nabla^2f$$
%and 
%$$\Delta f^{-1} =  2 f^{-3}||\grad f||^2 - f^{-2}\Delta f, \quad
%\grad f^{-1} = -2f^{-2} \grad f$$
we obtain that $\overline{\Ric}$ equals some scalar
multiple of $\overline{g}$ plus
\[\left( \begin{array}{cc}
0&0\\0&\Ric_* \end{array} \right) + \left( \begin{array}{cc}
0&0\\0&f^{-1}\cdot \Delta_* f \cdot g_* \end{array} \right) - \left( \begin{array}{cc}
0&0\\0&f^{-1}\cdot\nabla^2_* f \end{array} \right).\]
The assertion follows. \hfill $\Box$
\begin{lem} {\rm (Bourguignon \cite{Bo75}, Corvino \cite[Prop.2.3]{Co00})}

Assume that the equation 
$f \cdot \Ric - \nabla^2f + \Delta f \cdot g = 0$
holds for a never vanishing function $f \colon (M^n,g) \to \R$. Then the metric
$g$ is 
%(locally) 
%%%%%%%%%%%%%%%%%%%%%%changes
of constant scalar curvature $S$.
Moreover the equation $\Delta f = -\frac{S}{n-1} \cdot f$ holds. 
\end{lem}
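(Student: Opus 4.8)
\emph{Proof proposal.} The plan is to first read off the second assertion as a trace identity, and then to differentiate the tensor equation once more and let the contracted Bianchi identity do the work of forcing $S$ to be constant. Taking the metric trace of $f\cdot\Ric-\nabla^2 f+\Delta f\cdot g=0$ gives $fS-\Delta f+n\,\Delta f=0$, i.e. $(n-1)\Delta f=-fS$, which is exactly the claimed relation $\Delta f=-\tfrac{S}{n-1}f$; note that at this stage it holds pointwise, without yet knowing that $S$ is constant. So the whole content is the constancy of $S$, and everything else is a one-line consequence.

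To prove $dS=0$, set $T:=f\cdot\Ric-\nabla^2 f+(\Delta f)\,g$, so $T\equiv 0$ by hypothesis, and apply the divergence $\diver$ to the symmetric $(0,2)$-tensor $T$. I would evaluate the three pieces by standard identities: $\diver(f\cdot\Ric)=\Ric(\grad f,\cdot)+f\,\diver\Ric$, together with the contracted second Bianchi identity $\diver\Ric=\tfrac12\,dS$; the Bochner (Weitzenb\"ock) identity for the Hessian of a function, $\diver(\nabla^2 f)=d(\Delta f)+\Ric(\grad f,\cdot)$; and simply $\diver\big((\Delta f)\,g\big)=d(\Delta f)$. Adding these up, the two copies of $\Ric(\grad f,\cdot)$ cancel against one another and the two copies of $d(\Delta f)$ cancel as well, leaving $0=\diver T=\tfrac{f}{2}\,dS$.

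Since $f$ is nowhere zero, this forces $dS\equiv 0$ on $M$, and as $M$ is connected, $S$ is constant. Substituting this into the trace identity from the first step yields $\Delta f=-\tfrac{S}{n-1}f$ with $S$ now a genuine constant, completing the proof. The argument involves no real obstacle beyond careful bookkeeping; the only point requiring attention is to use the curvature and divergence sign conventions consistently with the paper's definitions of $R$ and $\Ric$ when invoking the Bianchi and Bochner identities, so that the cancellation in the displayed computation comes out clean. (The same computation in local coordinates, using $\nabla^i(fR_{ij})=(\nabla^i f)R_{ij}+\tfrac{f}{2}\nabla_j S$ and $\nabla^i\nabla_i\nabla_j f=\nabla_j\Delta f+R_{jk}\nabla^k f$, gives $\tfrac{f}{2}\nabla_j S=0$ directly.)
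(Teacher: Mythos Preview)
Your proof is correct and follows essentially the same route as the paper's: take the divergence of the tensor equation, use the product rule on $f\cdot\Ric$, the commutation identity $\diver(\nabla^2 f)=d(\Delta f)+\Ric(\grad f,\cdot)$ (which the paper derives from the Ricci identity and you call the Bochner identity), and the contracted Bianchi identity $\diver\Ric=\tfrac12\,dS$, so that everything cancels except $\tfrac{f}{2}\,dS$. The only cosmetic difference is that you extract the trace identity $\Delta f=-\tfrac{S}{n-1}f$ up front, whereas the paper records it at the end.
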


{\it Proof.}
From the Ricci identity for two commuting vector fields $X,Y$
$$\nabla_X\nabla_Y \grad f - \nabla_Y\nabla_X \grad f = R(X,Y) \grad f$$
we obtain the trace
$${\rm tr}(\nabla\nabla_Y \grad f) - \nabla_Y({\rm tr}\nabla \grad f) = \Ric\cdot
\grad f (Y)$$
or
$${\rm div}(\nabla^2f) - d(\Delta f) = \Ric\cdot \grad f.$$
If we combine this with the divergence of the equation
$f \cdot \Ric - \nabla^2f + \Delta f \cdot g = 0$
then we obtain
\begin{eqnarray*}
%\label{eq:conf-Ein}
0 & = & {\rm div}(f\cdot\Ric) - {\rm div}(\nabla^2 f) + {\rm div}(\Delta f
\cdot g) \\
&=&f\cdot {\rm div} (\Ric)+ \Ric \cdot \grad f - {\rm div}(\nabla^2 f)
+ d(\Delta f)\\
&=&f\cdot {\rm div} (\Ric)\\
&=&\textstyle f \cdot d(\frac{1}{2}S)
\end{eqnarray*}
where the last equality follows from the
second Bianchi identity ({\it ``the Einstein tensor is divergence free''})
\cite[6.15]{Ku02}. The condition $f \not = 0$ implies the assertion
$dS = 0$. A discussion of possible zeros of $f$ is given in \cite{Co00}.
The equation $\Delta f = -\frac{S}{n-1} \cdot f$ follows from the
trace of the other equation, compare Equation~\ref{eq:tracef}.
\hfill $\Box$

%\medskip
%{\sc Remark:} The assertion of this Lemma does not hold if $n=2$.
%If $K= -f'''/f'$ is the Gaussian curvature on a surface
%with warped product metric $dt^2+(f'(t))^2dx^2$
%then we have $\nabla^2f = f'' g$.
%Therefore the equation is just
%$f{\rm Ric}-\nabla^2f+2\Delta f g=(-f f'''/f' + f'')g = 0$ or,
%equivalently, $ff'''-f''f' = 0$.
%This ODE admits non-constant solutions.

\begin{cor}
If $f \colon M_* \to \R$ is a function on a 
%connected 
%%%%%%%%%%%%%%%%%%changes
$n$-dimensional manifold
$(M_*,g_*)$ such that
$\overline{g} = \pm f^2 dt^2 + g_*$ is Einstein then $g_*$
is a metric of constant scalar curvature.
In particular $f$ is an eigenfunction of $\Delta_*$
for the eigenvalue $-nk_*$.
\end{cor}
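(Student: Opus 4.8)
The plan is to read off the statement as an immediate consequence of Corollary~\ref{Corvino} together with the preceding Lemma of Bourguignon and Corvino. First one should note the standing assumption that $f$ is nowhere zero on $M_*$: this is forced, since otherwise the warped product $\overline{g} = \pm f^2\,dt^2 + g_*$ degenerates in the $dt^2$-direction at a zero of $f$ and fails to be a pseudo-Riemannian metric. With $f$ never vanishing, Corollary~\ref{Corvino} tells us that the Einstein condition on $\overline{g}$ is equivalent to Equation~\ref{eq:Corvino} on the $n$-dimensional manifold $(M_*,g_*)$, namely $f\cdot\Ric_* - \nabla^2_* f + \Delta_* f\cdot g_* = 0$.

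Next I would simply invoke the Lemma of Bourguignon/Corvino, applied with $(M^n,g)$ replaced by $(M_*,g_*)$ and with this never-vanishing $f$. Since its hypothesis is precisely Equation~\ref{eq:Corvino}, the Lemma yields at once that $g_*$ has constant scalar curvature $S_*$ and that $\Delta_* f = -\tfrac{S_*}{n-1}\,f$. (Alternatively one could reproduce the Lemma's argument directly: take the $g_*$-divergence of Equation~\ref{eq:Corvino}, commute with the trace via the Ricci identity, and use the contracted second Bianchi identity to get $f\,dS_* = 0$; but quoting the Lemma is cleaner.) Finally, because $\dim M_* = n$ we have $S_* = n(n-1)k_*$ by the normalization $k_* = S_*/\bigl(n(n-1)\bigr)$, so $\Delta_* f = -\tfrac{S_*}{n-1}f = -n k_* f$, i.e.\ $f$ is an eigenfunction of $\Delta_*$ for the eigenvalue $-nk_*$, as claimed.

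There is essentially no conceptual obstacle here, as the statement is a direct corollary of results already established in the excerpt; the only point demanding a word of care is the implicit requirement that $f$ have no zeros, which is what makes both Corollary~\ref{Corvino} and the Lemma applicable. A discussion of what can be said at isolated zeros of $f$ (where the warped product would have to be completed or would develop a conical behaviour) is given in \cite{Co00} and is not needed for the present statement.
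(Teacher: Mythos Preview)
Your proposal is correct and follows exactly the route the paper intends: the Corollary is stated without proof precisely because it is an immediate consequence of Corollary~\ref{Corvino} (giving Equation~\ref{eq:Corvino}) combined with the Bourguignon--Corvino Lemma (yielding constant $S_*$ and $\Delta_* f = -\frac{S_*}{n-1}f$), followed by the normalization $S_* = n(n-1)k_*$. Your remark on the implicit nowhere-vanishing of $f$ is a sensible clarification.
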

In this context we mention without proof the following:
\begin{pro} {\rm (Lafontaine \cite[Thm.1.1]{La09})}

If a compact Riemannian $3$-manifold $(M_*,g_*)$ admits two linearly independent
solutions $f_1,f_2$ of Equation~\ref{eq:Corvino} (such that
$\overline{g} = f_i^2 dt^2 + g_*$ is Einstein for $i=1,2$) 
then $(M_*,g_*)$ is isometric to the standard $3$-sphere or
to a standard product $S^1 \times S^2$ or $S^1 \times \R P^2$.
\end{pro}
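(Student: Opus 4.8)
\medskip
{\it Sketch of the intended proof.}
The plan is to reduce Equation~\ref{eq:Corvino} to a static equation with positive constant scalar curvature, then to observe that two solutions force an extra Killing symmetry, and finally to invoke the classification of compact three-manifolds carrying such a static potential. First I would show that $S_*$ is a positive constant. Let $f$ be any non-trivial solution. Along every geodesic $\gamma$ the restriction $f\circ\gamma$ solves the linear second order ODE $(f\circ\gamma)''=\nabla^2_*f(\gamma',\gamma')$, so by uniqueness $f$ cannot vanish on a non-empty open set and $\{f\neq0\}$ is open and dense. The divergence computation in the preceding lemma of Bourguignon and Corvino uses only Equation~\ref{eq:Corvino} and the second Bianchi identity and yields $f\cdot dS_*=0$ pointwise; hence $S_*$ is constant. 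Its trace, Equation~\ref{eq:tracef} with $n=3$, reads $\Delta_*f=-\tfrac12 S_*f$, so every non-trivial solution is an eigenfunction of $\Delta_*$ and Equation~\ref{eq:Corvino} is equivalent to the static equation $\nabla^2_*f=f\,A$ with $A:=\Ric_*-\tfrac12 S_*g_*$. Integrating $f\Delta_*f=-\tfrac12 S_*f^2$ over the compact $M_*$ gives $\int_{M_*}\|\grad f\|^2=\tfrac12 S_*\int_{M_*}f^2$; this excludes $S_*<0$, and it excludes $S_*=0$ since then every solution would be constant, while a constant solves Equation~\ref{eq:Corvino} only if $\Ric_*=0$, so the solution space would be one-dimensional, against the hypothesis. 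Hence $S_*>0$, and every non-trivial solution necessarily has zeros.

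\medskip
Next I would manufacture a Killing field out of the two solutions. Given linearly independent solutions $f_1,f_2$, consider the Wronskian one-form $\omega:=f_1\,df_2-f_2\,df_1$. Using $\nabla^2_*f_i=f_iA$ the Hessian contributions cancel, and one computes $(\nabla_X\omega)(Y)=X(f_1)\,Y(f_2)-X(f_2)\,Y(f_1)$, which is skew-symmetric in $X$ and $Y$; therefore $K:=\omega^{\sharp}$ satisfies ${\mathcal L}_K g_*=0$, i.e. $K$ is a Killing vector field on $(M_*,g_*)$. If $K$ vanished identically then $\grad(f_1/f_2)=0$ on the dense open set $\{f_2\neq0\}$, and a matching argument across the regular hypersurface $\{f_2=0\}$ --- on which $\grad f_2\neq0$ by the ODE above --- would force $f_1/f_2$ to be a global constant, contradicting linear independence. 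So $K\not\equiv0$ and $(M_*,g_*)$ admits a non-trivial isometric flow; passing to the closure of its one-parameter group one obtains an effective isometric torus action, which preserves $A$ and the space of solutions.

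\medskip
Finally I would invoke the classification of compact three-manifolds that carry a static potential (a non-trivial solution of $f\Ric-\nabla^2f+\Delta f\,g=0$): when $S>0$ the metric is either Einstein --- hence of constant positive curvature by the lemmas above --- or a Riemannian product $S^1\times(\Sigma,g_\Sigma)$ with $g_\Sigma$ of positive constant Gaussian curvature (the three-dimensional Fischer--Marsden examples). In the Einstein case the static equation becomes the Obata equation $\nabla^2_*f=-\tfrac16 S_*f\,g_*$, so by Obata's theorem (\cite{Ob62}, \cite{Ta65}, \cite{Ku88}, \cite{KR09}) the existence of a non-constant solution forces $(M_*,g_*)$ to be the standard three-sphere, the quotient $\RP^3$ being excluded because there its only such solutions are the constants. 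In the non-Einstein case $\Sigma$ is a compact surface of constant positive curvature, hence $S^2$ or $\RP^2$, and $(M_*,g_*)$ is the standard product $S^1\times S^2$ or $S^1\times\RP^2$, with the circle scaled so that $\cos$ and $\sin$ descend; conversely each of the three manifolds carries a two-dimensional space of solutions. The main obstacle is precisely this last step: one has to show that in the non-Einstein case the Killing field $K$ splits the metric as a warped product over a surface, that $S_*>0$ together with the static equation forces the warping to be trivial and the base round, and that no twisted or Seifert-fibered alternatives survive, all while controlling the zero sets of the $f_i$ and the zeros of $K$. This is in substance the Kobayashi--Lafontaine analysis of the Fischer--Marsden equation in dimension three (\cite{La09}), which is why we only quote the result.
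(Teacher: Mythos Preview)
The paper does not prove this proposition at all: it is introduced with the sentence ``In this context we mention without proof the following'' and is simply attributed to Lafontaine~\cite{La09}. So there is no proof in the paper to compare your proposal against.

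Your sketch therefore already goes well beyond the paper's treatment. As an outline of the Lafontaine argument it is reasonable: the constancy of $S_*$ via the Bourguignon--Corvino divergence identity, the sign of $S_*$ by integrating $f\Delta_* f$, the Wronskian Killing field $K = (f_1\,df_2 - f_2\,df_1)^\sharp$, and the non-triviality of $K$ via the unique-continuation property of solutions along geodesics are all standard and essentially correct. One minor slip: in the $S_*=0$ case you say the solution space ``would be one-dimensional''; more precisely, harmonic functions on a compact manifold are constant, and a nonzero constant solves Equation~\ref{eq:Corvino} only if $\Ric_*=0$, so the solution space is either $\{0\}$ or the constants --- either way at most one-dimensional, which is what you need. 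You are also honest that the decisive structural step (from the existence of the Killing field to the product splitting $S^1\times\Sigma$ in the non-Einstein case) is exactly the content of~\cite{La09}, so your sketch, like the paper, ultimately defers to that reference.
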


Notice the opposite sign convention in \cite{La09} for the Laplacian.

\medskip
Now we are going to describe nontrivial solutions of the equations
in \ref{Corvino}. It turns out that there are complete Einstein spaces
of this type with a complete $(M_*,g_*)$ 
that are not warped products with a 1-dimensional base themselves, see
Example~\ref{complete-example}.

\begin{pro}{\rm (The Ansatz of iterated warped products)}
\label{iterated}

Let $g_k$ be an Einstein metric with
$Ric= k(n-2) g_k, k \in \R$
on an $(n-1)$-dimensional manifold $M$ and 
$g_*=dx^2+u^2(x)g_k$
a warped product metric on $M_* = I \times M$ with an interval $I \subseteq \R$.
Let $f=f(x)$ be a smooth function on an interval in the real line.
Then Equation \ref{eq:Corvino}
%\begin{equation}
%f Ric_*-\nabla_*^2 f+ \Delta_* f g_*=0
%\end{equation}
holds if and only if $f(x)=a u'(x)$ for some constant $a\not=0$
and 
\begin{equation}
\label{eq:iterated}
u^2 u'''+ (n-3)u u'u''- (n-2)u'^3 + k (n-2) u' =0\,.
\end{equation}
Hence for a positive solution $u$ 
of Equation~\ref{eq:iterated}
the warped product metric 
$$\overline{g} = 
\pm u'^2(x) dt^2+ dx^2 + u^2(x)g_k
$$
on $I \times I \times M$
is an Einstein metric. Here $I$ is an interval on which
$u$ and $u'$ are positive.
The metric $\overline{g}$ can be regarded as a warped product 
$g_* + u'^2dt^2$ with an $n$-dimensional base and a $1$-dimensional fibre or as a warped product
$dx^2 \pm u'^2dt^2 + u^2g_k$ with a $2$-dimensional base in the (x,t)-plane. 
\end{pro}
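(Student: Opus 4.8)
The plan is to insert the warped-product data of $g_*=dx^2+u^2(x)g_k$ directly into Equation~\ref{eq:Corvino} and to use that, since $f$ depends only on $x$, every tensor appearing there is block-diagonal for the splitting $T(I\times M)=TI\oplus TM$; thus Equation~\ref{eq:Corvino} decouples into one scalar equation coming from the $(\partial_x,\partial_x)$-entry and one tensor equation proportional to $g_k$ coming from the $M$-block. The ingredients I would record first are the standard warped-product formulas for an $(n-1)$-dimensional fibre: $\Ric_*(\partial_x,\partial_x)=-(n-1)u''/u$ and $\Ric_*|_{TM}=\bigl(k(n-2)-uu''-(n-2)u'^2\bigr)g_k$ (using $\Ric_{g_k}=k(n-2)g_k$), the Hessian $\nabla^2_*f(\partial_x,\partial_x)=f''$ and $\nabla^2_*f|_{TM}=uu'f'\,g_k$, and consequently $\Delta_*f=f''+(n-1)u'f'/u$. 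All mixed components vanish because $f=f(x)$.

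First I would read off the $(\partial_x,\partial_x)$-component of Equation~\ref{eq:Corvino}: after cancellation it collapses to $u'f'-fu''=0$, which on every interval where $u'\neq0$ is the same as $(f/u')'=0$. Hence $f=a\,u'$ for a constant $a$, and $a\neq0$ precisely when $f$ is not identically zero --- the only relevant case, since Corollary~\ref{Corvino} in any event needs $f$ nowhere zero.

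Next I would substitute $f=au'$, $f'=au''$, $f''=au'''$ into the $M$-block of Equation~\ref{eq:Corvino}, which after dividing out $g_k$ reads $fk(n-2)-fuu''-(n-2)fu'^2+(n-2)uu'f'+u^2f''=0$. Dividing by $a$ and collecting the $uu'u''$-terms yields exactly Equation~\ref{eq:iterated}, $u^2u'''+(n-3)uu'u''-(n-2)u'^3+k(n-2)u'=0$. Conversely, if $u>0$ with $u'>0$ solves Equation~\ref{eq:iterated} and $f:=au'$, the same two computations run backwards show that $f$ satisfies Equation~\ref{eq:Corvino} on $M_*$; by Corollary~\ref{Corvino} the metric $\pm f^2dt^2+g_*$ is then Einstein, and rescaling $t$ absorbs $a$, producing the stated form $\overline{g}=\pm u'^2(x)\,dt^2+dx^2+u^2(x)g_k$. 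The two warped-product readings are then immediate: either over $(M_*,g_*)$ with one-dimensional fibre and warping function $|a|\,u'$ (the Corollary~\ref{Corvino} picture), or, regrouping as $\overline{g}=(dx^2\pm u'^2dt^2)+u^2g_k$, over the two-dimensional base in the $(x,t)$-plane with fibre $(M,g_k)$ warped by $u(x)$ --- legitimate because $u$ depends only on the base coordinate $x$.

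The only genuine work here is bookkeeping: pinning down the signs and the $n$-dependence in the warped-product Ricci and Hessian formulas and keeping the two blocks apart; there is no conceptual obstacle. As a sanity check one can note that the $(\partial_x,\partial_x)$-equation is consistent with the Bourguignon--Corvino Lemma above (which forces $g_*$ to have constant scalar curvature with $\Delta_*f=-S_*f/(n-1)$), and that the explicit solution $u(x)=e^x$ with $k=0$ and $g_k$ Ricci-flat satisfies Equation~\ref{eq:iterated} and recovers the familiar Einstein metric $dx^2+e^{2x}(\pm dt^2+g_k)$.
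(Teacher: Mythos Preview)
Your argument is correct and follows the same route as the paper: compute the two blocks of Equation~\ref{eq:Corvino} for the warped product $g_*=dx^2+u^2g_k$, read off $f=au'$ from the $(\partial_x,\partial_x)$-component, and substitute into the $M$-block to obtain Equation~\ref{eq:iterated}. The only differences are organisational (you package the $M$-block via $g_k$ rather than via a unit vector $X$) together with your added remarks on the $u'\neq 0$ hypothesis and the sanity checks, none of which changes the substance.
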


Remark: The classical solutions $u(x) = x, \sin x, \cos x, e^x, 
\sinh x, \cosh x$
lead to Einstein warped product metrics $g_*$ for appropriate choices of $k$
with a 1-dimensional base. Especially $u(x) = x$ is a solution with $k=1$
corresponding to cylindrical coordinates in the Ricci flat $\R\times M_*$
where $M_*$ is also Ricci flat.
This is not interesting by \ref{Brinkmann}. 
We will see below in Proposition~\ref{classification} 
that there are in fact non-classical solutions leading
to complete Einstein metrics on $\R \times M_*$ with a non-Einstein space $(M_*,g_*)$. 

\begin{proof}
%In this proof we write $g$ instead of $g_*$. 
Since $\nabla_X \partial_x= \frac{u'}{u}X$ we obtain 
\begin{equation*}
\nabla^2f(\partial_x,\partial_x)= f'',\ \ \nabla^2f(X,X)=\frac{f'u'}{u} 
%\langle X,Y\rangle
\end{equation*}
where $X$ denotes a unit tangent vector orthogonal to $\partial_x,$ hence
\begin{equation*}
\Delta f=f''+(n-1)\frac{f'u'}{u}\,.
\end{equation*}
This leads to the equations
\begin{eqnarray*}
-\nabla^2f(\partial_x,\partial_x)+\Delta f 
&=&(n-1)\frac{f'u'}{u}\\
-\nabla^2f(X,X)+\Delta f 
&=& f''+(n-2)\frac{f'u'}{u}
\end{eqnarray*}
and 
%\begin{eqnarray}
%f {\rm Ric}\left(\partial_x,\partial_x\right)-\nabla^2f({\partial_x},\partial_x%)+\Delta f 
%=\frac{n-1}{u}\left(f'u'-f''u\right)\label{eq:fric1}\\
%f {\rm Ric} (X,X)-\nabla^2f(X,X)+\Delta f 
%=\\ 
%\left\{
%\frac{f}{u^2}
%\left(
%k(n-2)-(n-2)u'^2-u u''
%\right)+
%f''+(n-2)\frac{f'u'}{u}
%\right\} \label{eq:fric2}
%\end{eqnarray}

\begin{equation}
f {\rm Ric}\left(\partial_x,\partial_x\right)-\nabla^2f({\partial_x},\partial_x)+\Delta f 
=\frac{n-1}{u}\left(f'u'-f''u\right)\label{eq:fric1}
\end{equation}

\begin{equation}\label{eq:fric2}
f {\rm Ric} (X,X)-\nabla^2f(X,X)+\Delta f
\end{equation} 
$$= 
\frac{f}{u^2}
\left(
k(n-2)-(n-2)u'^2-u u''
\right)+
f''+(n-2)\frac{f'u'}{u}
$$
Assuming Equation~\ref{eq:Corvino} we see that the right hand side must vanish.
Hence we conclude from Equation~\ref{eq:fric1} that
$f'/f=u''/u$ and, therefore,
$f=au'$ for some constant $a.$ Without loss of generality let $a=1.$
From Equation~\ref{eq:fric2} we conclude Equation~\ref{eq:iterated}.
Conversely, Equation~\ref{eq:iterated} implies Equation~\ref{eq:fric2}  
and Equation~\ref{eq:Corvino}. \end{proof}
\begin{exa} \rm(particular non-standard solution)\label{non-standard}

Equation \ref{eq:iterated} is satisfied by the function $u(x) = x^{2/n}$
if $k=0$. In fact we have 
$$\textstyle u'(x) = \frac{2}{n}x^{(2-n)/n}, 
u''(x) = \frac{4-2n}{n^2}x^{(2-2n)/n}, 
u'''(x) = \frac{(4-2n)(2-2n)}{n^3}x^{(2-3n)/n}.$$
For $n=3$ we obtain the 4-dimensional Einstein metric
$\frac{4}{9}x^{-2/3}dt^2 + dx^2 + x^{4/3}g_0$
where $(M,g_0)$ is any flat 2-manifold. 
Obviously the sectional curvature in any $(t,x)$-plane is not constant,
so this is not a space of constant curvature.
Moreover, for $x \to 0$ we run into a singularity.
\end{exa}
\begin{cor}{\rm (Explicit integration)}
\label{explicit}

Equation \ref{eq:iterated} is equivalent to any of the following 
differential equations:
\begin{equation}\label{second}
uu'' + \frac{ n-2}{2}u'^2 + du^2 = k\frac{n-2}{2} \mbox{ for some constant } d
\end{equation}
\begin{equation}\label{first}
u^{n-2}\Big(u'^2 + \frac{2d}{n}u^2 -k \Big) = c\mbox{ for 
constants } c \mbox{ and } d
\end{equation}
\begin{equation}\label{eq:Besse}
u'^{2}  +\frac{2d}{n}u^{2} -cu^{2-n} = k  \mbox{ for 
constants } c \mbox{ and } d
\end{equation}
\begin{equation}\label{second-oscillator}
(u^{n/2})'' +\frac{dn}{2}u^{n/2} = k\cdot \frac{n(n-2)}{4}u^{(n-4)/2} 
\mbox{ for some constant } d
\end{equation}
\begin{equation}\label{first-oscillator}
\Big((u^{n/2})'\Big)^2 +\frac{dn}{2}\Big(u^{n/2}\Big)^2 = k\cdot \frac{n^2}{4}u^{n-2} + e\mbox{ for 
constants } d \mbox{ and } e
\end{equation}

\end{cor}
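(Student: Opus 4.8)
The plan is to regard equation~\ref{eq:iterated} as the master equation and to obtain the other four as successive elementary integrals of it together with one algebraic change of variable, working throughout on an interval $I$ on which $u>0$ and $u'>0$ (as in Proposition~\ref{iterated}), so that every division below is legitimate and each newly introduced letter $d$, $c$, $e$ is a genuine constant of integration. It helps to keep in mind two groups: the second-order pair (equations~\ref{second} and~\ref{second-oscillator}) and the first-order triple (equations~\ref{first}, \ref{eq:Besse} and~\ref{first-oscillator}). Equation~\ref{eq:iterated} will be linked to the first group by one integration, the first group to the second by one further integration, and the links inside each group are purely algebraic. Chaining these passages then yields all the asserted equivalences.

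First I would establish equation~\ref{eq:iterated} $\Longleftrightarrow$ equation~\ref{second}. Put $\Psi := uu'' + \frac{n-2}{2}u'^2$, so that $\Psi' = uu''' + (n-1)u'u''$. Using $2uu'u'' = 2u'\Psi - (n-2)u'^3$ one rewrites the left-hand side of equation~\ref{eq:iterated} as $u\Psi' - 2u'\Psi + k(n-2)u'$, so that equation~\ref{eq:iterated} is equivalent to the exact relation $\frac{d}{dx}\big(\Psi\,u^{-2}\big) = \frac{d}{dx}\big(\frac{k(n-2)}{2}u^{-2}\big)$. Integrating once gives $\Psi + du^2 = \frac{k(n-2)}{2}$, which is equation~\ref{second} with $d$ as constant of integration; conversely, differentiating equation~\ref{second}, multiplying by $u$, and eliminating the term $du^2$ by means of equation~\ref{second} itself reproduces equation~\ref{eq:iterated}.

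Next I would integrate once more. Multiplying equation~\ref{second} by the integrating factor $2u^{n-3}u'$ turns its two sides into $\frac{d}{dx}\big(u^{n-2}u'^2 + \frac{2d}{n}u^n\big)$ and $\frac{d}{dx}\big(k u^{n-2}\big)$, so integrating gives $u^{n-2}\big(u'^2 + \frac{2d}{n}u^2 - k\big) = c$, that is, equation~\ref{first}; conversely, differentiating equation~\ref{first} produces $2u^{n-3}u'$ times the difference of the two sides of equation~\ref{second}, which forces equation~\ref{second} because $u,u'>0$. Equation~\ref{eq:Besse} is simply equation~\ref{first} divided by $u^{n-2}>0$, so those two are trivially equivalent. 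Finally the substitution $w = u^{n/2}$ satisfies $w'' = \frac{n}{2}u^{(n-4)/2}\Psi$ and $(w')^2 = \frac{n^2}{4}u^{n-2}u'^2$, $w^2 = u^n$; inserting the first identity into equation~\ref{second}, solved for $\Psi$, gives exactly equation~\ref{second-oscillator}, while multiplying equation~\ref{first} by $\frac{n^2}{4}$ and using the last two identities gives equation~\ref{first-oscillator} with $e = \frac{n^2}{4}c$. Both substitutions are manifestly reversible.

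The only step that is not a routine manipulation is the first one: one must spot that $uu'' + \frac{n-2}{2}u'^2$, rescaled by $u^{-2}$, becomes an exact derivative modulo equation~\ref{eq:iterated}, and it is this observation that lowers the order from three to two. After that, everything is either a single integration against the transparent integrating factor $2u^{n-3}u'$ or the purely algebraic substitution $w = u^{n/2}$, together with the bookkeeping of the constants $d$, $c$ and $e = \frac{n^2}{4}c$.
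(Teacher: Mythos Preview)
Your proof is correct and follows essentially the same route as the paper: one integration (spotting that $\big(uu''+\tfrac{n-2}{2}u'^2\big)u^{-2}$ is exact modulo \eqref{eq:iterated}) to reach \eqref{second}, then the integrating factor $2u^{n-3}u'$ to reach \eqref{first}, and the substitution $w=u^{n/2}$ for the oscillator forms. The only cosmetic difference is that the paper obtains \eqref{first-oscillator} by integrating \eqref{second-oscillator}, whereas you derive it algebraically from \eqref{first} and thereby make the relation $e=\tfrac{n^2}{4}c$ explicit; both are fine.
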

The two last equations are modified oscillator equations.
They can be explicitly solved by integrals of elementary functions.
For $k=0$ we obtain a classical oscillator equation for the function 
$(u(x))^{2/n}$.
The particular solution in Example \ref{non-standard} is the case $d=0$.
Equation \ref{eq:Besse} coincides with the version in \cite{Be87}.
Equation \ref{first} was used in \cite{Ku88} and \cite{KR97}.
Here the case $c=0$ leads to the classical solutions of $u'^2 + \frac{2d}{n}u^2 = k$. The case $c=0$ in \ref{first} is also the classical case
of the oscillator equation for $u$. In this case the
metric $dx^2 + u'^2dt^2$ on the 2-dimensional base is of constant curvature.
\begin{proof}We discuss only the case of a non-constant and non-vanishing
solution $u(x)$.

\medskip
The equivalence of Equation~\ref{eq:iterated} with Equation~\ref{second}:
We differentiate
$$\frac{u''}{u} + \frac{(n-2)u'^2}{2u^2} + d = k \frac{n-2}{2u^2}$$
and multiply by $u^3$. This leads to
$$u^2u''' - uu'u'' + (n-2)uu'u''-(n-2)u'^3 = -k(n-2)u'.$$
The equivalence of Equation~\ref{first} with Equation~\ref{second}:
We multiply Equation~\ref{second} by $2n^{n-2}u'$ and obtain
$$\big(u^{n-2}u'^2\big)' + \big( \frac{2d}{n}u^n -ku^{n-2}\big)' = 0.$$ 
The equivalence of  Equation~\ref{second} 
with Equation~\ref{second-oscillator}:
Evaluating the second derivative of $u^{n/2}$ leads to
$$\frac{n}{2}\Big(u^{(n-2)/2}u'' + \frac{n-2}{2}u^{(n-4)/2}u'^2   \Big).$$
Thus Equation~\ref{second-oscillator} takes the form
$$\frac{n}{2}u^{(n-4)/2}\Big(uu'' + \frac{n-2}{2}u'^2 + du^2   \Big) =
\frac{n}{2}u^{(n-4)/2}\cdot k\frac{n-2}{2}.$$
The equivalence of  Equation~\ref{second-oscillator} 
with Equation~\ref{first-oscillator} follows by differentiating the
first order equation and dividing by $2(u^{n/2})'$.
\end{proof}

\begin{exa}\rm \label{complete-example}
The simplest examples can be constructed from 
Equation~\ref{second-oscillator}
in the case $k=0$ and $d=2/n$. 
Let $(M,g)$ be a Ricci flat Riemannian manifold,
and let $u(x) = (\cosh(x))^{2/n}$. Then the metric
$u'^2dt^2 + dx^2 + u^2g$ is Einstein.

\medskip
In particular for complete $(M,g)$ this metric $dx^2 + (\cosh(x))^{4/n}g$ 
provides a complete non-standard solution (i.e., not Einstein) 
$(\R \times M,g_*)$ in Corollary~\ref{Corvino}.
If moreover $n=4$ and $k=-1/2$ then we have the solution
$u^2(x) = e^x +1$
of Equation~\ref{second-oscillator} with $u\not = 0$ 
and $u' \not =0$ everywhere. Therefore this leads to a complete
Einstein metric 
$$\frac{e^{2x}}{4(e^x+1)} dt^2 + dx^2 + (e^x + 1)g$$ on $\R \times M_* = \R^2 \times M.$
\end{exa}

\begin{cor}({\rm warped products of constant scalar curvature})

The following conditions are equivalent for a function $u(x)$
and a real constant $k$:
\begin{enumerate}
\item  $g_k$ is an Einstein metric with ${\rm Ric} = k(n-2)g_k$ on an 
$(n-1)$-manifold $M$ and
$\overline{g} = u'^2(x)dt^2 + dx^2 + u^2(x)g_k$ is Einstein.
\item  $g_{(k)}$ is a metric of constant normalized scalar curvature $k$ on an 
$(n-1)$-manifold $M$ and the warped product
$h_{(k)} = dx^2 + u^2(x)g_{(k)}$ is again a metric of constant scalar curvature.
\end{enumerate}
\end{cor}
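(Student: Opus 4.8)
The plan is to reduce each of the two conditions to one and the same ordinary differential equation for $u$ --- Equation~\ref{eq:iterated}, equivalently Equation~\ref{second} --- and to conclude by transitivity. The existence of a suitable fibre metric is never a real restriction: for every $k\in\R$ the $(n-1)$-dimensional space form of constant sectional curvature $k$ is Einstein with ${\rm Ric}=(n-2)k\,g$, so it realizes condition (1), and a fortiori it has constant normalized scalar curvature $k$, so it realizes condition (2).

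For condition (1) I would invoke the results already established. The metric $\overline g = u'^2(x)\,dt^2 + dx^2 + u^2(x)g_k$ is exactly the one studied in Proposition~\ref{iterated} in the case $f=u'$ (i.e.\ $a=1$). Thus, given a fibre $g_k$ that is Einstein with ${\rm Ric}=k(n-2)g_k$, Proposition~\ref{iterated} (which rests on Corollary~\ref{Corvino}) says that $\overline g$ is Einstein if and only if $u$ satisfies Equation~\ref{eq:iterated}; by Corollary~\ref{explicit} this is equivalent to Equation~\ref{second}, namely $uu'' + \frac{n-2}{2}u'^2 + du^2 = \frac{n-2}{2}k$ for some constant $d$. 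Hence, as a condition on the pair $(u,k)$, (1) is equivalent to: $u$ solves Equation~\ref{eq:iterated}.

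For condition (2) I would compute the scalar curvature of the warped product $h_{(k)}=dx^2+u^2(x)g_{(k)}$ over the one-dimensional base $(I,dx^2)$. Using the standard warped-product formulas (see \cite[Ch.~9]{Be87}; the connection identity $\nabla_X\partial_x=\frac{u'}{u}X$ for fibre vectors $X$ is already recorded in the proof of Proposition~\ref{iterated}) one gets ${\rm Ric}_{h}(\partial_x,\partial_x)=-(n-1)\frac{u''}{u}$ and, in the fibre directions, ${\rm Ric}_{h}={\rm Ric}_{(k)}-\big(uu''+(n-2)u'^2\big)g_{(k)}$, so that, taking traces,
$$
S_{h}=\frac{1}{u^2}\Big(S_{(k)}-2(n-1)\,uu''-(n-1)(n-2)\,u'^2\Big).
$$
Since ``constant normalized scalar curvature $k$'' in dimension $n-1$ means $S_{(k)}=(n-1)(n-2)k$, the function $S_{h}$ is a constant if and only if there is a constant $d$ with $S_{(k)}-2(n-1)uu''-(n-1)(n-2)u'^2=2(n-1)d\,u^2$; dividing by $-2(n-1)$ and using $S_{(k)}=(n-1)(n-2)k$ turns this into precisely Equation~\ref{second}. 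Therefore (2) holds (for a suitable fibre) if and only if $u$ satisfies Equation~\ref{second}, equivalently Equation~\ref{eq:iterated}.

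Combining the two reductions, conditions (1) and (2) are both equivalent to $u$ satisfying Equation~\ref{eq:iterated} for the given $k$ --- the free constant $d$ in Equation~\ref{second} being exactly what encodes the Einstein constant of $\overline g$ in (1) and the scalar curvature of $h_{(k)}$ in (2) --- so (1) $\Leftrightarrow$ (2). I expect the only point requiring care to be the bookkeeping in the second step: getting the signs and the dimension-dependent coefficients in the warped-product Ricci/scalar formulas right, and checking that the normalization of the scalar curvature reproduces the exact constant $\frac{n-2}{2}k$ on the right-hand side of Equation~\ref{second}. The genuinely analytic work --- the explicit integration of Equation~\ref{eq:iterated} --- is already supplied by Corollary~\ref{explicit}, so no further obstacle is anticipated.
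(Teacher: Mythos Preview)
Your proposal is correct and follows the same strategy as the paper: both conditions are reduced to Equation~\ref{second} (equivalently Equation~\ref{eq:iterated}), with (1) handled via Proposition~\ref{iterated} and Corollary~\ref{explicit} exactly as the paper does. The only difference is cosmetic: for (2) the paper simply cites \cite{Ku88} and \cite{KR97} for the scalar-curvature identity, while you carry out the warped-product computation explicitly---your formula $S_h=u^{-2}\big(S_{(k)}-2(n-1)uu''-(n-1)(n-2)u'^2\big)$ and the resulting normalization are correct and recover precisely Equation~\ref{second} with $2d/n$ the normalized scalar curvature of $h_{(k)}$.
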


\begin{proof} The first condition (1) is equivalent to Equation \ref{eq:Corvino}
which in turn is equivalent to Equation \ref{eq:iterated} and to
Equation \ref{second} by the calculations above.
On the other hand the second condition (2) is equivalent to the same
Equation \ref{second}, which is a standard formula in this case,
see the proof of Theorem 24 in \cite{Ku88} or Lemma 4.6 in \cite{KR97}. 
Here $2d/n$ is
the constant normalized scalar curvature of $h_{(k)}$. \end{proof}

By Corollary \ref{explicit} all local solutions can be expressed in terms
of elementary functions. Periodic solutions $u(x)$ of Equation \ref{second} 
lead to a countable number of solutions
on the compact manifold $S^1 \times S^{n-1}$, see Ejiri \cite{Ej82}.
The simplest one is $u(x) = \sqrt{2 + \cos x}$ for $n=4$, 
$d = \frac{1}{2}, k = 1$. Moreover, $u \frac{\partial}{\partial x}$ 
is a conformal (locally gradient) vector field \cite{KR97}.
\section{$4$-dimensional conformally Einstein products: 
Towards a classification}

Our Main Theorem~\ref{pseudoriem} provides a rough classification
of 4-dimensional conformally Einstein products:
These are either warped products or they are of the 
type $M \times N$ with two 2-manifolds $M,N$ and a particular conformal factor
$f(y,x) = a(y) + b(x)$ satisfying 
$ca^2 + 2\epsilon_1a'' = d =cb^2 + \epsilon_2b''$, compare the proof of 
the Main Theorem. 
In the latter case all possible metrics
are explicitly classified by the solutions of these two ODEs.
It remains to discuss the possible warped products.
We recall the following general results on Einstein warped products.

\begin{lem} {\rm (\cite[9.116]{Be87})} \label{Besse-Lemma}
A warped product metric $g_M = g_B + f^2g_F$ on $M = B \times F$
with a function $f \colon B \to \R$ and with
$p = {\rm dim}(F), q = {\rm dim}(B)$
is Einstein with ${\rm Ric}_M = \lambda_Mg_M$ 
if and only if the following three conditions hold:
\begin{enumerate}
\item $(F,g_F)$ is Einstein with ${\rm Ric}_F = \lambda_Fg_F$
\item ${\rm Ric}_B -\frac{p}{f}\nabla^2_Bf = \lambda_M g_B$
\item $f\Delta f + (p-1)||{\rm grad} f||^2 + \lambda_Mf^2 = \lambda_F$
\end{enumerate}
If $q=2$ then $(3)$ is equivalent to
Equation~\ref{second} with $p = n-1$, $f = u$,
$\lambda_M = 2d$ and $\lambda_F = k(n-2)$, and $(2)$ is equivalent with $(\nabla_B^2f)^\circ = 0$.
\end{lem}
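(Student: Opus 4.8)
The plan is to prove Lemma~\ref{Besse-Lemma} in two stages. First I would establish the three general conditions (1)--(3) for an arbitrary warped product $g_M = g_B + f^2 g_F$ to be Einstein. These are exactly the well-known formulas for the Ricci curvature of a warped product: with $X,Y$ tangent to $B$ and $U,V$ tangent to $F$, one has
\[
\mathrm{Ric}_M(X,Y) = \mathrm{Ric}_B(X,Y) - \frac{p}{f}\nabla^2_B f(X,Y),
\]
\[
\mathrm{Ric}_M(X,U) = 0,
\]
\[
\mathrm{Ric}_M(U,V) = \mathrm{Ric}_F(U,V) - \Big(\frac{\Delta f}{f} + (p-1)\frac{\|\mathrm{grad}\,f\|^2}{f^2}\Big) g_M(U,V).
\]
Imposing $\mathrm{Ric}_M = \lambda_M g_M$ on the mixed block forces nothing new (it vanishes automatically); on the $B$-block it gives condition (2); on the $F$-block it gives $\mathrm{Ric}_F = \big(\lambda_M f^2 + f\Delta f + (p-1)\|\mathrm{grad}\,f\|^2\big) g_F$, and since the bracketed coefficient is a function on $B$ while $\mathrm{Ric}_F$ is a tensor on $F$, that coefficient must be a constant $\lambda_F$ --- which is simultaneously condition (1) and condition (3). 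I would cite \cite[9.116]{Be87} for these curvature identities rather than rederive them, since the lemma itself is attributed there.

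\textbf{Second}, I would treat the case $q = \dim(B) = 2$. Here the point is to match the abstract conditions with the concrete ODE \ref{second}. Set $p = n-1$, $f = u$, $\lambda_M = 2d$, $\lambda_F = k(n-2)$, and let $x$ be a local arc-length parameter on $B$ along which $u$ is non-constant (the generic situation; where $\mathrm{grad}\,u = 0$ both sides degenerate trivially). For the equivalence of (3) with Equation~\ref{second}: on a surface $\Delta u = u'' + $ (a term involving the metric), but more cleanly one notes that in two dimensions $\mathrm{Ric}_B = K_B g_B$, so condition (2) reads $K_B g_B - \frac{p}{u}\nabla^2_B u = 2d\, g_B$, whose trace-free part is $(\nabla^2_B u)^\circ = 0$ --- this is the last assertion of the lemma --- and whose trace gives $2K_B - \frac{p}{u}\Delta u = 4d$, i.e. $\Delta u = \frac{u}{p}(2K_B - 4d)$. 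Feeding the relation $(\nabla^2_B u)^\circ = 0$ (equivalently $\nabla^2_B u = \frac{\Delta u}{2} g_B$, so along the trajectory $u'' = \frac{1}{2}\Delta u$ and $\|\mathrm{grad}\,u\|^2 = (u')^2$) into condition (3) yields, after substituting $\lambda_F = k(n-2)$ and $\lambda_M = 2d$ and simplifying,
\[
u u'' + \frac{n-2}{2}(u')^2 + d u^2 = k\frac{n-2}{2},
\]
which is precisely Equation~\ref{second}. Conversely, given Equation~\ref{second} together with $(\nabla^2_B u)^\circ = 0$, one reads off that $K_B$ is determined and reverses the computation to recover (2) and (3).

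\textbf{The main obstacle} I anticipate is purely bookkeeping: getting the constants $\lambda_M = 2d$, $\lambda_F = k(n-2)$, $p = n-1$ to line up so that (3) becomes exactly Equation~\ref{second} and not some rescaled variant, and being careful about the two-dimensional identities (trace versus trace-free part of the Hessian, the relation between $\Delta u$, $u''$ and the warping). There is also a minor subtlety in that $q=2$ is used to guarantee $\mathrm{Ric}_B = K_B g_B$ --- this is what makes the $B$-condition (2) collapse to $(\nabla^2_B u)^\circ = 0$ plus a scalar equation, and it is what fails for $q \geq 3$. No genuinely hard analysis is involved; the content is the warped-product Ricci formula, which we import from \cite[9.116]{Be87}, plus elementary manipulation on surfaces.
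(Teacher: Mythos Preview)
Your proposal is correct and follows essentially the same route as the paper: both invoke the standard warped-product Ricci identities (the O'Neill equations, cited from \cite[9.116]{Be87}) for conditions (1)--(3), and then in the case $q=2$ use $\mathrm{Ric}_B = K_B g_B$ to reduce (2) to $(\nabla_B^2 f)^\circ = 0$, whence $\Delta f = 2f''$ and $\|\mathrm{grad}\,f\|^2 = (f')^2$ along the gradient trajectory, so that (3) becomes exactly Equation~\ref{second}. The paper phrases the surface step by writing $g_B = dt^2 + f'^2(t)\,dx^2$ explicitly and appeals to Proposition~\ref{iterated} for the converse, but the substance is identical to your computation.
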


\begin{proof}
The three equations are standard equations for warped products,
the so-called O'Neill-equations.
In the special case $q=2$ 
we have ${\rm Ric}_B = \lambda_Bg_B = K_Bg_B$ with the Gaussian curvature $K_B$,
$\lambda_F = K_F$ and, therefore,
$(\nabla^2_Bf)^0 = 0$ and $\nabla^2_Bf = f''g_B$.
This implies that $g_B$ itself is a warped product metric
$g_B = dt^2 + f'^2(t)dx^2$ with $K_B = -f'''/f'$.
In (3) we have $\Delta f = 2f''$ and $||{\rm grad} f|| = f'^2$.
Equation~\ref{second} follows which is equivalent with Equation~\ref{eq:iterated}. 
Conversely, from Equation~\ref{second}
we get back to (3) and from the statement of Proposition~\ref{iterated}
we get back to (1) and (2).
\end{proof}

\begin{cor} {\rm (Brinkmann \cite{Br25})}

Let $(M,g)$ be an Einstein space with a warped product metric
$g = \epsilon dt^2 + u^2(t) g_*$. Then $g_*$ is an Einstein metric and $u$ satisfies the ODE
from Corollary~\ref{Brinkmann}, i.e. 
$$u'' + \epsilon k u = 0, \ (u')^2 + \epsilon k u^2 = \epsilon k_*$$
Conversely, these conditions are also sufficient.
\end{cor}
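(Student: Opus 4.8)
The plan is to reduce the whole statement to Lemma~\ref{Besse-Lemma} (Besse's criterion $9.116$) applied to a \emph{one-dimensional} base, i.e.\ with $q=1$ and $p=n-1$. Concretely I would take $B$ to be the $t$-interval with $g_B=\epsilon\,dt^2$, $F=M_*$ with $g_F=g_*$, $f=u$, so that $g=g_B+f^2g_F=\epsilon\,dt^2+u^2(t)\,g_*$. Since $\dim B=1$ we automatically have $\Ric_B=0$, and every symmetric $(0,2)$-tensor on $B$ is proportional to $g_B$; explicitly $\nabla_B^2u=u''\,dt^2=\epsilon u''\,g_B$, $\Delta_Bu=\epsilon u''$ and $\|\grad u\|^2=\epsilon(u')^2$. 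There is no genuine obstacle here, only the bookkeeping of the factors $\epsilon$ and of the two different dimension conventions: $\dim M=n$, so the normalized Einstein constant of $g$ is $k=\lambda_M/(n-1)$, whereas $\dim M_*=n-1$, so that of $g_*$ is $k_*=\lambda_*/(n-2)$.

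First I would read off condition~$(1)$ of Lemma~\ref{Besse-Lemma}: since $g$ is Einstein, $g_*$ must be Einstein, say $\Ric_*=\lambda_*g_*=(n-2)k_*g_*$. Condition~$(2)$ becomes $-\tfrac{n-1}{u}\,\epsilon u''\,g_B=\lambda_Mg_B$, and inserting $\lambda_M=(n-1)k$ this is exactly $u''=-\epsilon k u$, i.e.\ the first ODE $u''+\epsilon k u=0$. Finally, condition~$(3)$, namely $u\,\Delta_Bu+(n-2)\|\grad u\|^2+\lambda_Mu^2=\lambda_*$, reads $\epsilon u u''+(n-2)\epsilon(u')^2+(n-1)k u^2=(n-2)k_*$; substituting $u''=-\epsilon k u$ collapses the left-hand side to $(n-2)\bigl(\epsilon(u')^2+k u^2\bigr)$, so for $n\geq 3$ dividing by $n-2$ and multiplying by $\epsilon$ yields the second ODE $(u')^2+\epsilon k u^2=\epsilon k_*$. (For $n=2$ there is no $k_*$ to speak of and the assertion is the elementary fact that a surface of constant curvature can be written in this warped-product form.)

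For the converse I would run the same chain backwards: given an Einstein $g_*$ with normalized scalar curvature $k_*$ and a function $u$ solving the two ODEs, set $\lambda_M:=(n-1)k$ and $\lambda_*:=(n-2)k_*$; then conditions $(1)$, $(2)$, $(3)$ of Lemma~\ref{Besse-Lemma} hold in turn exactly as above (the first ODE gives $(2)$, the two ODEs together give $(3)$), so $g=\epsilon\,dt^2+u^2(t)\,g_*$ is Einstein with $\Ric=(n-1)k\,g$. Note that, with $u=\varphi'$, the two ODEs are literally Equation~\ref{eq:Brinkmann}, which is precisely why this is the same case already solved in Corollary~\ref{Brinkmann}; all solutions are therefore explicit, compare \cite{KR09}. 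The only point deserving a word of care is that Lemma~\ref{Besse-Lemma} as stated applies for every $q$, even though only the $q=2$ specialization (to Equation~\ref{second}) is written out there; in the present case $q=1$ the trace-free part of condition~$(2)$ is vacuous and $(2)$ merely fixes $\lambda_M$.
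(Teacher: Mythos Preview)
Your argument is correct and is precisely the intended one: the paper places this Corollary immediately after Lemma~\ref{Besse-Lemma} without a separate proof, so the reader is meant to specialize that lemma to $q=1$, $p=n-1$ exactly as you do. Your bookkeeping of the $\epsilon$'s and of the two normalizations $\lambda_M=(n-1)k$, $\lambda_*=(n-2)k_*$ is accurate, and the observation that with $u=\varphi'$ the two ODEs become Equation~\ref{eq:Brinkmann} is exactly the link the paper has in mind.
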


\begin{cor} {\rm (1-dimensional base, Brinkmann \cite[Thm.III]{Br25})}

A $4$-dimensional Einstein warped product 
with a $1$-dimensional base is of constant sectional curvature.
\end{cor}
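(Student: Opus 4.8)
\emph{Proof strategy.} The plan is to isolate the one extra ingredient that dimension four adds to the general Brinkmann situation (a warped product over an interval with an Einstein fibre): over a $1$-dimensional base the fibre is now $3$-dimensional, and a $3$-dimensional Einstein space automatically has constant sectional curvature; feeding this back into the warped-product curvature formulas then forces the total metric to be a space form.

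First I would apply the previous Corollary: the metric $g = \epsilon\,dt^2 + u^2(t)\,g_*$ being Einstein with normalized Einstein constant $k$ forces $(M_*,g_*)$ to be a $3$-dimensional Einstein space with normalized scalar curvature $k_*$, and $u$ to obey $u'' + \epsilon k\,u = 0$ and $(u')^2 + \epsilon k\,u^2 = \epsilon k_*$. Next comes the step special to $n=4$: since $\dim M_* = 3$ the Weyl tensor of $g_*$ vanishes identically, so the decomposition of the curvature tensor into its Weyl part and its Schouten (Ricci) part shows that $R_*$ is completely determined by $\Ric_*$; together with $\Ric_* = 2k_*\,g_*$ this gives $R_*(X,Y)Z = k_*\bigl(g_*(Y,Z)X - g_*(X,Z)Y\bigr)$, i.e.\ $(M_*,g_*)$ has constant curvature $k_*$. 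This argument is valid in every signature, so no definiteness hypothesis is needed.

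Then I would invoke the standard warped-product curvature identities for a $1$-dimensional base (see \cite[Ch.~9]{Be87}) applied to $g = \epsilon\,dt^2 + u^2 g_*$. With $\partial = \partial_t$ and with $X,Y$ tangent to the fibre, these give $R(X,Y)\partial = 0$; they give the sectional curvature of every plane through $\partial$ as $-\epsilon\,u''/u$; and, using that the fibre now has constant curvature $k_*$, they give the sectional curvature of every plane tangent to a fibre as $(k_* - \epsilon\,(u')^2)/u^2$. Substituting the two Brinkmann relations, i.e.\ $u'' = -\epsilon k\,u$ and $\epsilon\,(u')^2 = k_* - k\,u^2$, collapses both expressions to the single constant $k$. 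Since for a warped product over a $1$-dimensional base these are the only independent curvature components and they now coincide, the full curvature tensor of $g$ is $R(U,V)W = k\bigl(g(V,W)U - g(U,W)V\bigr)$ on the open set where $u\neq 0$, which is precisely the statement that $(M,g)$ has constant sectional curvature $k$. (The degenerate subcase $u'\equiv 0$, a genuine metric product, gives $u''=0$ and $k_*=0$, hence $k=0$, consistently; and in dimensions $n\geq 5$ the fibre would be an Einstein space of dimension $\geq 4$, which need not be a space form, so the conclusion is genuinely a dimension-four phenomenon.)

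I expect the only real obstacle to be the bookkeeping in the pseudo-Riemannian setting: keeping the sign $\epsilon = g(\partial,\partial)$ and the distinction between the fibre metric $g_*$ and the ambient metric $u^2 g_*$ on vertical vectors consistent throughout the warped-product curvature formulas, and phrasing ``constant sectional curvature'' at the level of the curvature tensor so that the usual caveat about degenerate $2$-planes in indefinite signature is harmless. The input ``$3$-dimensional Einstein implies constant curvature'' is classical and can simply be quoted.
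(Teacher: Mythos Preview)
Your proposal is correct and follows essentially the same approach as the paper: the paper's proof is a two-line remark that the Brinkmann corollary makes the $3$-dimensional fibre Einstein, hence of constant sectional curvature, and that this in turn forces the whole warped product to be a space form. You have simply spelled out in detail the step the paper leaves implicit, namely the verification via the warped-product curvature formulas and the two ODEs that every sectional curvature equals $k$.
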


This follows from the proof of Corollary~\ref{Brinkmann} together with the fact
that a 3-dimensional Einstein fibre must be of constant
sectional curvature. This implies that the entire space is
of constant sectional curvature. For the Riemannian case compare
\cite[9.109]{Be87} or \cite{Ku88}.

\begin{cor}{\rm (2-dimensional base, A.Besse \cite[9.116-9.118]{Be87})}
\label{2-dimensional}

A $4$-dimensional Einstein warped product $g_M = g_B + u^2g_F$ 
with a $2$-dimensional base $B$
has a fibre $F$ of constant Gaussian curvature $K_F$ and a base $B$ of 
the following type:

The warping function $u$ on $B$ is a never vanishing solution of the equation 
$(\nabla_B^2u)^0 = 0$, hence the metric on $B$ is a warped product
$\epsilon dt^2 + u'^2(t)dx^2$ itself (an abstract surface of revolution, possibly
with exceptional points with $u' = 0$).
Moreover, $u$ satisfies the ODE
\begin{equation}\label{eq:2-base}
2\epsilon uu'' + \epsilon u'^2 + \lambda_M u^2 = K_F
\end{equation}
where $\lambda_M$ is the Einstein constant of the $4$-manifold
and $K_F$ is the constant Gaussioan curvature of $g_F$.
One integration step as in \ref{explicit} leads to the equation
\begin{equation}\label{eq:2-base-first}
u\Big(\epsilon u'^2 + \frac{\lambda_M}{3}\epsilon u^2 - K_F\Big) = c
\end{equation}
with a constant $c$. Here the coefficient $\lambda_M/3$ is nothing but the
normalized scalar curvature $k_M$ of $M$.
\end{cor}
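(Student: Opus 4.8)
The plan is to specialize the O'Neill equations of Lemma~\ref{Besse-Lemma} to a surface fibre and a surface base, $p = \dim F = 2$ and $q = \dim B = 2$, and then run the integration scheme of Corollary~\ref{explicit}.

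First I would invoke the three conditions of Lemma~\ref{Besse-Lemma}. Condition (1) says that the $2$-dimensional fibre $(F,g_F)$ is Einstein, which on a surface is precisely the statement that $g_F$ has constant Gaussian curvature $K_F = \lambda_F$. Condition (2) reads $\Ric_B - \frac{2}{u}\nabla_B^2u = \lambda_M g_B$; since $B$ is a surface, $\Ric_B = K_B g_B$ is automatically a multiple of $g_B$, so, exactly as recorded in Lemma~\ref{Besse-Lemma} for $q=2$, condition (2) is equivalent to the vanishing of the trace-free part $(\nabla_B^2u)^\circ = 0$. This already gives the first assertion of the corollary.

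Next I would read off the consequences of $(\nabla_B^2u)^\circ = 0$ on the surface $B$ from the explicit solution of this equation (Corollary~\ref{Brinkmann} together with \cite{KR09}). On the open set where $\grad u \neq 0$ --- whose complement, when $\grad u$ is non-null, consists of isolated points playing the role of the poles of a surface of revolution --- one takes the arc-length parameter $t$ along the gradient trajectories of $u$ together with a transverse coordinate $x$, in which $g_B = \epsilon\,dt^2 + w^2(t)\,dx^2$ with $\epsilon = \pm 1$ and a positive function $w$. Computing the Hessian of a function of $t$ alone in these coordinates and imposing $(\nabla_B^2u)^\circ = 0$ forces $u''/u' = w'/w$, hence $w = u'$ up to a multiplicative constant which may be absorbed; thus $g_B = \epsilon\,dt^2 + u'^2(t)\,dx^2$ is an abstract (possibly indefinite) surface of revolution with $K_B = -\epsilon\,u'''/u'$, the points with $u' = 0$ being removable coordinate singularities exactly as for the sphere, the plane, or the hyperbolic plane in geodesic polar coordinates.

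Finally I would substitute into condition (3) of Lemma~\ref{Besse-Lemma}: in these coordinates $\Delta u = 2\epsilon u''$ and $\|\grad u\|^2 = \epsilon u'^2$, so with $p = 2$ it becomes $2\epsilon u u'' + \epsilon u'^2 + \lambda_M u^2 = K_F$, which is Equation~\ref{eq:2-base}. Multiplying by $u'$ and integrating once --- recognizing $\epsilon u'^3 + 2\epsilon u u' u'' = \frac{d}{dt}\bigl(\epsilon u u'^2\bigr)$, exactly the passage from Equation~\ref{second} to Equation~\ref{first} in Corollary~\ref{explicit} --- yields Equation~\ref{eq:2-base-first}, in which the coefficient $\lambda_M/3$ is the normalized scalar curvature $k_M$ of the $4$-manifold $M$ by the definition $k = \lambda/(n-1)$ with $n = 4$. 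Each step is reversible, which also yields the converse. The step I expect to require the most care is the reduction of $g_B$ to the surface-of-revolution normal form --- in particular the smooth extension of the metric across the exceptional points $u' = 0$, and the consistent bookkeeping of the sign $\epsilon$ throughout the pseudo-Riemannian computation; the remaining ODE manipulations are routine and already codified in Corollary~\ref{explicit}.
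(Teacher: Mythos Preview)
Your proof is correct and follows essentially the same route as the paper: specialize the O'Neill equations of Lemma~\ref{Besse-Lemma} to $p=q=2$, read off $(\nabla_B^2u)^\circ=0$ from condition~(2), pass to the surface-of-revolution normal form for $g_B$, and substitute into condition~(3) to obtain Equation~\ref{eq:2-base}, then integrate. The only cosmetic difference is that the paper also writes out condition~(2) explicitly as the third-order ODE $-\epsilon uu'''-2\epsilon u'u''=uu'\lambda_M$ and observes it is the $t$-derivative of Equation~\ref{eq:2-base}, using this to argue independently that $K_F$ must be constant (since the left side of~(3) depends only on $B$); you instead invoke condition~(1) together with the paper's convention that an Einstein surface has constant Gaussian curvature, which is equally valid.
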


%A classification of complete manifolds of this type
%is available in \cite[9.119]{Be87}.
%There is no compact $B$ of this type except for the unit sphere
%by the same argueing as in Corollary~\ref{compact} above:
%If $f$ is positive everywhere there is a positive minimum $x_1$ and a positive
%maximum $x_2$. Then from the ODE it is impossible that $f''(x_2) = -f''(x_1)$. 

\begin{proof}
%{\it Proof of \ref{2-dimensional}.}
In our special case $p=q=2$ we have ${\rm Ric}_B = \lambda_Bg_B = K_Bg_B$,
$\lambda_F = K_F$ and, therefore,
$(\nabla^2_Bf)^0 = 0$.
This implies that $g_B$ is a warped product metric
$g_B = \epsilon dt^2 + u'^2(t)dx^2$ itself with $\epsilon = \pm 1$, $K_B = -\epsilon u'''/u'$
and $\nabla^2_Bf = \epsilon u''g_B$.
Furthermore (2) implies the equation 
$K_Bg_B -2 \epsilon \frac{u''}{u} = \lambda_Mg_B$ or, equivalently,
\begin{equation}\label{2-2-Einstein}
-\epsilon uu''' - 2\epsilon u'u'' = uu'\lambda_M.
\end{equation}
%By using $(ff'')' = ff''' + f'f''$ this can be integrated as
%$$2ff'' + f'^2 + \lambda_Mf^2 = c  \ \ (\mbox{ constant) }.$$
From (3) we obtain  similarly 
$$2\epsilon uu'' + \epsilon u'^2 + \lambda_Mu^2 = \lambda_F = K_F,$$
and Equation~\ref{2-2-Einstein} is nothing but the derivative
of this equation. Since this is an equation only on $B$, it follows
that $K_F$ must be constant. Equation~\ref{eq:2-base} follows.

\medskip
A compact base $B$ must be a surface of genus zero with two
exceptional points with $f' = 0$, namely, minimum and maximum of the function 
$f$. In fact the function $f(x) =\cos x$ solves this ODE
with $K_F = 1$ and $\lambda_M = 3$. An example of a compact base ist 
$dx^2 + \sin^2 x \ dt^2$ in a warped product
$dx^2 + \sin^2 x \ dt^2 +  \cos^2 x \ g_F.$
If in this case $F$ is the unit 2-sphere then $M$ is the unit 4-sphere 
in cylindrical coordinates around an equator which is,
strictly speaking, a warped product only on a dense subset.
Here $B$ appears as an equatorial 2-sphere of $M$.
In any other case zeros of $f$ would lead to a contradiction.
See also Proposition~\ref{classification}. 
%\hfill $\Box$
\end{proof}

\medskip
Remark: As in the explicit integration above we can transform 
Equation~\ref{eq:2-base} into the following form:
$$(u^{3/2})'' + \frac{3\lambda_M}{4}fu^{3/2} = \frac{3K_F}{4}u^{-1/2}$$
which is again a modified oscillator equation for $f^{3/2}$.
We can integrate this equation leading to
$$((u^{3/2})')^2 + \frac{3\lambda_M}{4}(u^{3/2})^2 = \frac{9K_F}{4}u + c$$
with a constant $c$.

\begin{pro}  {\rm (3-dimensional base)}

$4$-dimensional Einstein warped products with a $3$-dimensional base
can be constructed along the lines of the iterated warped products in
Proposition~\ref{iterated}. These depend on several real parameters,
and in general these metrics are not of constant curvature.
Therefore, they cannot be described by warped products with a $1$-dimensional 
base.
\end{pro}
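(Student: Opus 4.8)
The plan is to recognise a $4$-dimensional Einstein warped product with a $3$-dimensional base as exactly a solution of the equations already treated in Proposition~\ref{iterated} and Corollary~\ref{explicit}, specialised to $n=3$. First I would write such a metric as $g_M = g_B + u^2 g_F$ with $\dim B = 3$ and $\dim F = 1$. A $1$-dimensional fibre is automatically Einstein with $\lambda_F = 0$, so Lemma~\ref{Besse-Lemma} in the case $p=1$, $q=3$ — equivalently Corollary~\ref{Corvino} with $n=3$ — reduces the Einstein condition to Equation~\ref{eq:Corvino} for the warping function $u$ on the base $B$, i.e. $u\cdot\Ric_B - \nabla_B^2 u + \Delta_B u\cdot g_B = 0$. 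In particular, by the Bourguignon--Corvino Lemma, $g_B$ has constant scalar curvature; but in dimension $3$ this is far from forcing $g_B$ to be a space form, which is precisely what leaves room for non-trivial solutions.

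Next I would insert the Ansatz of Proposition~\ref{iterated} with $n=3$: take a $2$-dimensional manifold $(M,g_k)$ of constant Gaussian curvature $k$ — no extra hypothesis is needed here, since in dimension $2$ ``Einstein'' means ``constant curvature'' — and let the base be $g_B = dx^2 + u^2(x)\,g_k$. Proposition~\ref{iterated} then yields that $\overline{g} = \pm u'^2(x)\,dt^2 + dx^2 + u^2(x)\,g_k$ is a $4$-dimensional Einstein metric exactly when $u$ solves Equation~\ref{eq:iterated}, which for $n=3$ reads $u^2u''' - u'^3 + ku' = 0$. By Corollary~\ref{explicit} this is equivalent to the first integral $2uu'' + u'^2 + 2du^2 = k$, and then to $u'^2 + \frac{2d}{3}u^2 - cu^{-1} = k$, which is solved by a single quadrature of an elementary function on any interval where $u$ and $u'$ are positive. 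The construction thus depends on the fibre curvature $k$, the constant $d$ (with $\lambda_M = 2d$ by Lemma~\ref{Besse-Lemma}), the integration constant $c$, the sign $\epsilon = \pm 1$, and an inessential translation in $x$; this is the asserted family depending on several real parameters.

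To finish I would invoke Example~\ref{non-standard}: for $k=0$ the solution $u(x) = x^{2/3}$ gives the Ricci-flat metric $\frac{4}{9}x^{-2/3}\,dt^2 + dx^2 + x^{4/3}g_0$, whose sectional curvature in the $(t,x)$-planes is non-constant, so on an open set of parameters $\overline{g}$ is not of constant sectional curvature. The last assertion of the proposition is then immediate from the corollary above on $1$-dimensional bases (Brinkmann \cite[Thm.III]{Br25}), which says that every $4$-dimensional Einstein warped product with a $1$-dimensional base has constant sectional curvature: a member of the present family that is not a space form can therefore carry no warped-product structure with a $1$-dimensional base.

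The step I expect to require the most care is not a computation but the bookkeeping — checking that the dimension count of Proposition~\ref{iterated} lines up ($n+1=4$, hence $n=3$, a $3$-dimensional base built as a warped product over a $2$-dimensional space form), confirming that the resulting family genuinely varies with $k,d,c$ rather than collapsing to previously known metrics, and locating the proper sub-locus (essentially $c=0$) on which $\overline{g}$ does become a space form. Once these points are settled, the proposition follows by assembling Proposition~\ref{iterated}, Corollary~\ref{explicit}, Example~\ref{non-standard} and the $1$-dimensional-base corollary already established above.
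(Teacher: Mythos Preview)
Your proposal is correct and follows precisely the line of argument the paper intends: the paper itself gives no formal proof of this proposition, only a remark pointing to Equation~\ref{eq:iterated} (i.e.\ Proposition~\ref{iterated}) for the construction, so your assembly of Proposition~\ref{iterated} with $n=3$, Corollary~\ref{explicit}, Example~\ref{non-standard}, and the Brinkmann corollary on $1$-dimensional bases is exactly what is meant, only spelled out in more detail than the paper provides.
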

Remark: A complete classification of this case 
would depend on a complete classification
of the solutions of Equation~\ref{eq:Corvino} on 3-manifolds which does not
seem to be known. From Equation~\ref{eq:iterated} we just obtain special
families of examples that can also be written as warped products with
a 2-dimensional base.
More examples of open subsets $U$ of homogeneous spaces $M$ can be found
in \cite[9.122]{Be87}. 

\begin{cor}

$5$-dimensional Riemannian Einstein warped products with a $2$-dimen\-sional 
base are products
$B^2 \times F^3(K_F)$ of a surface with a $3$-manifold of constant 
curvature $K_F$ with
the metric $g_B = dx^2 + u'^2(x) dt^2$, $\overline{g} = g_B + u^2g_F$ 
where $u$ satisfies the ODE
$$2uu'' + 2u'^2 + \overline{k}u^2 = 2K_F$$
or, equivalently,
$$(u^2)'' + \overline{k}u^2 = 2K_F.$$
This is a standard oscillator equation for $u^2$.
A special case is $u^2(x) = 2 + \cos x$ with $\overline{k} = 1, K_F = 1$.
\end{cor}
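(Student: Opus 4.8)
The plan is to specialise Lemma~\ref{Besse-Lemma} (Besse's warped-product criterion) to a base $B$ of dimension $q=2$ and a fibre $F$ of dimension $p=3$, in complete parallel with the proof of Corollary~\ref{2-dimensional}, which is the $p=2$ case; indeed the statement is essentially the $n=4$ Riemannian specialisation of Proposition~\ref{iterated} and Corollary~\ref{explicit}, with the extra observation --- coming from condition $(1)$ --- that a $3$-dimensional Einstein fibre is automatically of constant sectional curvature. So first I would invoke $(1)$: $(F,g_F)$ is Einstein, and a Riemannian Einstein $3$-manifold has vanishing Weyl tensor, hence is of constant sectional curvature $K_F$, with $\Ric_F = 2K_F g_F$, i.e.\ $\lambda_F = 2K_F$. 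Next, condition $(2)$ with $q=2$ gives $(\nabla^2_B u)^\circ = 0$ for the warping function $u = f$; by the classification recalled in Corollary~\ref{Brinkmann} and already used for Corollary~\ref{2-dimensional}, on a Riemannian surface this forces, away from the zeros of $\grad u$, the surface-of-revolution form $g_B = dx^2 + u'^2(x)\,dt^2$ (with $x$ the arc-length parameter on a gradient trajectory), together with $K_B = -u'''/u'$ and $\nabla^2_B u = u''\,g_B$.

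Then I would read off conditions $(2)$ and $(3)$ on this base. On $g_B = dx^2 + u'^2 dt^2$ one has $\Delta_B u = 2u''$, $\|\grad u\|^2 = u'^2$ and $\Ric_B = -(u'''/u')g_B$, so, writing $\overline{k}$ for the Einstein constant $\lambda_M$ of the $5$-manifold and using $\lambda_F = 2K_F$, condition $(3)$ becomes
\[
2uu'' + 2u'^2 + \overline{k}u^2 = 2K_F ,
\]
which, since $(u^2)'' = 2u'^2 + 2uu''$, is exactly $(u^2)'' + \overline{k}u^2 = 2K_F$: a linear constant-coefficient oscillator equation for $v := u^2$. Condition $(2)$, after inserting $K_B$ and $\nabla^2_B u$, reads $u'''/u' + 3u''/u + \overline{k} = 0$, which is precisely the derivative of $(3)$ divided by $2uu'$, hence entails nothing new. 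This settles the forward direction. For the converse I would start from $F^3(K_F)$ of constant sectional curvature and a positive solution $u$ of $(u^2)'' + \overline{k}u^2 = 2K_F$ on an interval where $u$ and $u'$ are positive, set $g_B = dx^2 + u'^2 dt^2$, and verify Lemma~\ref{Besse-Lemma}$(1)$--$(3)$ directly: $(1)$ is immediate, $(3)$ is the ODE, and $(2)$ follows by differentiating the ODE and dividing by $2uu'$. Finally I would check the sample solution: $v = 2+\cos x$ gives $v'' + v = -\cos x + 2 + \cos x = 2$, forcing $\overline{k}=1$ and $K_F=1$.

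There is no serious obstacle here; the content is the bookkeeping above, and everything is routine once Lemma~\ref{Besse-Lemma} is in hand. The only point that needs a word of care is the behaviour at critical points of $u$, where the base metric $dx^2 + u'^2 dt^2$ degenerates: exactly as in Corollary~\ref{2-dimensional}, such points are at worst the poles of an abstract surface of revolution (possibly carrying a cone singularity), so the asserted normal form is a statement on the open set $\{\grad u \neq 0\}$. The degenerate case of constant $u$ --- a plain Riemannian product of two Einstein spaces, with $B$ itself Einstein --- is excluded by the hypothesis that we have a genuine (non-trivial) warped product.
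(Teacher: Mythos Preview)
Your proposal is correct and follows precisely the route the paper intends: the corollary is stated without its own proof because it is an immediate specialisation of Lemma~\ref{Besse-Lemma} (which already records the $q=2$ reduction to Equation~\ref{second} and $(\nabla_B^2 f)^\circ=0$) together with the elementary fact that a $3$-dimensional Einstein fibre has constant sectional curvature, exactly as you argue. Your bookkeeping on conditions $(2)$ and $(3)$, the observation that $(2)$ is the derivative of $(3)$, and the verification of the sample $u^2=2+\cos x$ are all in order.
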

Remark: There is no compact base $B$ of this type. The only candidate is a surface
of genus zero with the metric  $g_B = dx^2 + u'^2(x) dt^2$ where $u$ is periodic
and strictly positive with the same value of $|u''|$ at minimum and maximum.
Such a solution of the ODE above does not exist by
Lemma~\ref{droplemma}.
For the example $u(x) = \sqrt{2+\cos x}$ this can be seen from
$u'(0) = u'(\pi) = 0$ and $u''(0) = -1/(2\sqrt{3})$, $u''(\pi) = 1/2$.
This is the same phenomenon as for the extremal surfaces above, see Figure 1.
 
\begin{pro}  {\rm (complete Einstein warped products with a 2-dimensional base)}
\label{classification}

There are essentially three types of complete pseudo-Riemannian
Einstein warped products with a $2$-dimen\-sional base
and an $(n-1)$-dimensional fibre that cannot be written as warped products with a $1$-dimensional base
and, in particular, that are not of constant curvature.
In any case (up to sign) the metric in $n+1$ dimensions is of the type
$$\overline{g}= \pm dx^2 + u'^2(x)dt^2 + u^2(x)g_k$$ as in 
Proposition~\ref{iterated} with an $(n-1)$-dimensional Einstein metric $g_k$
with normalized scalar curvature $k$ 
(or metric of constant curvature $k$ if $n=3$) and a strictly positive function
$u(x)$ satisfying the equations in Corollary~\ref{explicit}.

\smallskip
Type I: $u$ is defined on $\R$ with $u>u_0>0, u'>0$ everywhere, asymptotically we have
$u(x) \sim u_0 + e^{ax}$ at $\pm \infty$ with positive constants $a, u_0$. 
The base is the $(x,t)$-plane or a cylindrical quotient of it.

\smallskip
Type II: $u'$ has a zero at $x_0$ with $u''(x_0) = 1$, and $u>0$ is defined on $[x_0, \infty)$
with an asymptotic growth at infinity like a linear function.
The base is a (rotationally symmetric but non flat)
plane in polar coordinates.

\smallskip
Type III: $u'$ has a zero at $x_0$ with $u''(x_0) = 1$, and $u>0$ is defined on $[x_0, \infty)$
with an asymptotic growth at infinity like an exponential function.
The base is a (rotationally symmetric but not flat) 
plane in polar coordinates.

\medskip
There is no compact base of this type except for the standard $2$-sphere.

\medskip
Conversely, for any given complete metric $g_k$ of arbitrary signature
all three types lead to complete warped products provided the base is Riemannian.

\end{pro}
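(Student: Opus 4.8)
The plan is to reduce the problem to the single first-order ODE of Corollary~\ref{explicit} and then to run a phase-line analysis, the two genuinely global points being the smoothness of the $2$-dimensional base at a zero of $u'$ and metric completeness at the ends of the maximal $x$-interval. By Proposition~\ref{iterated} every such metric has the form $\overline{g}=\pm dx^{2}+u'^{2}(x)dt^{2}+u^{2}(x)g_{k}$, and by Corollary~\ref{explicit} a positive warping function $u$ satisfies Equation~\ref{eq:Besse}, which I write as
\[
u'^{2}=F(u),\qquad F(u)=k-\frac{2d}{n}u^{2}+cu^{2-n},
\]
with the convenient consequence $u''=\frac12 F'(u)$ along any solution. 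The hypothesis that $\overline{g}$ is not a warped product over a $1$-dimensional base is exactly $c\neq0$: for $c=0$ the function $u$ is one of $\sin x,\cos x,\sinh x,\cosh x,e^{x}$ or a linear function, the $(x,t)$-base has constant curvature, and $\overline{g}$ is one of the warped products of Corollary~\ref{Brinkmann} (compare the remark after Proposition~\ref{iterated}). For $n=3$ the fibre is a surface, so ``Einstein'' means constant curvature $k$, while for $n\geq4$ it is a genuine Einstein metric. This much is bookkeeping; the substance is the classification of the complete positive solutions of $u'^{2}=F(u)$ with $c\neq0$.

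Since $n\geq3$, the term $cu^{2-n}$ tends to $0$ as $u\to\infty$, so the sign of $d$ governs the behaviour at infinity: for $d>0$ the right-hand side $F$ eventually turns negative and $u$ stays bounded; for $d=0$ one must have $k>0$, and then $u'\to\sqrt{k}$, i.e.\ linear growth; for $d<0$ one gets $u'\sim\sqrt{2|d|/n}\,u$, i.e.\ exponential growth. Near $u=0$ the term $cu^{2-n}$ dominates, and a solution that reaches $u=0$ does so at a finite value of $x$, producing a metric singularity where the fibre collapses; such a branch is incomplete and is discarded. Hence a complete solution is either monotone with $u$ bounded below by a positive constant, or it has exactly one turning point and lives on a half-line. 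A monotone complete solution tends to a limit $u_{0}>0$ at the finite end, and reaching $u_{0}$ in infinite $x$-time forces $F(u_{0})=F'(u_{0})=0$; one then checks $F''>0$ (which in turn forces $c>0$, $k<0$, $d<0$), so $F$ is strictly convex, $F\geq0$ with equality only at $u_{0}$, and $u$ increases from $u_{0}$ at $-\infty$ to $\infty$ at $+\infty$ with the stated exponential asymptotics at both ends: this is Type~I. If instead $u'(x_{0})=0$ at a turning point with $u_{*}:=u(x_{0})>0$, then $F(u_{*})=0$ with $F'(u_{*})\neq0$, and the rotationally symmetric base $\pm dx^{2}+u'^{2}dt^{2}$ extends smoothly across $x_{0}$ as a surface of revolution around a point exactly when, after normalising the period of $t$ (equivalently, using the scaling symmetry $u(x)\mapsto\lambda u(x/\lambda)$ of Equation~\ref{eq:iterated}), one has $u''(x_{0})=\frac12 F'(u_{*})=1$; the solution then increases from $u_{*}$ to $\infty$, with linear growth if $d=0$ (Type~II) and exponential growth if $d<0$ (Type~III). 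In either case the two equations $F(u_{*})=0$ and $F'(u_{*})=2$ pin down $k$ and $c$ in terms of $u_{*}$ and $d$, so the three families are exactly as described.

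It remains to exclude $d>0$, and with it a compact base, in the non-classical case. When $d>0$ a complete solution that avoids $u=0$ must oscillate between two positive zeros $u_{1}<u_{2}$ of $F$, so the base is a compact genus-zero surface of revolution, and smoothness at its two poles requires that $u''(x_{\min})$ and $-u''(x_{\max})$ agree (call the common value $A>0$), where $u''=\frac12 F'(u_{i})$ at a critical value $u_{i}$. Evaluating the first integral~\ref{first} at $u_{1}$ and at $u_{2}$ and eliminating the two constants of integration --- exactly the manipulation carried out for the extremal surfaces in the proof of Corollary~\ref{compact} (see also Lemma~\ref{droplemma} and Figure~1), which there collapses to the identity $A^{3}=0$ --- forces $A=0$, i.e.\ $u$ constant; so the only compact base in this family is the standard $2$-sphere, which belongs to the classical case $c=0$. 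This completes the trichotomy. For the converse, given any complete $g_{k}$ and parameters realising one of the three cases, standard ODE theory provides the positive solution $u$ on the stated interval, the normalisation $u''(x_{0})=1$ turns a Riemannian base $dx^{2}+u'^{2}dt^{2}$ into a smooth and complete surface (a non-flat plane in polar coordinates for Types~II and III, the $(x,t)$-plane for Type~I), and since $u\geq u_{0}>0$ throughout --- with $u_{0}=\inf u$ in Type~I and $u_{0}=u(x_{0})$ in Types~II and III --- the metric $\overline{g}$ dominates the complete product metric $dx^{2}+u'^{2}dt^{2}+u_{0}^{2}g_{k}$ and is therefore itself complete.

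The step I expect to be the main obstacle is not any single computation but the global case analysis: for each sign pattern of $(d,c,k)$ one has to decide whether the maximal solution is complete and smooth, collapses the fibre at a finite parameter value, or closes up into a compact base, and in the last case one must convert the pole-smoothness requirement into the genuine non-existence statement via the general-$n$ analogue of the identity $A^{3}=0$ from the proof of Corollary~\ref{compact}. The asymptotic estimates --- linear versus exponential growth, and the exponential approach to a double zero of $F$ --- and the smoothness normalisation $u''(x_{0})=1$ at a single turning point are the remaining delicate points, but both become routine once the phase line of $u'^{2}=F(u)$ is well understood.
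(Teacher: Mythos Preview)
Your argument is correct and follows essentially the same route as the paper: both reduce to the phase-line analysis of $u'^{2}=F(u)$ (the paper writes the equivalent polynomial $P(v)=c-\overline{k}v^{n}+kv^{n-2}$ and the explicit integral $x(u)=\int\sqrt{v^{n-2}/P(v)}\,dv$), organize into the same three cases by the multiplicity of the positive zeros of $F$, and invoke Lemma~\ref{droplemma} to kill the would-be compact base. The only notable difference is in the converse direction: the paper deduces completeness of $\overline{g}$ from the uniform lower bound $u/\sqrt{1+u^{2}}\geq C/2$ together with a cited criterion for warped products, whereas your comparison with the complete product metric $dx^{2}+u'^{2}dt^{2}+u_{0}^{2}g_{k}$ is a cleaner, self-contained substitute.
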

Remark: For Riemannian metrics this classification is mentioned through 
the types (c), (a), (d) in \cite[9.118]{Be87}.
However, the proof refers to a preprint which apparently never appeared,
and the solutions are not given explicitly.
Moreover, Type (c) there is not quite convincing, as far as the constant 
coefficients are concerned. The discussion of this classification
in the appendix of \cite{HPW} is a bit sketchy, in particular on Type (c). 
\begin{proof}
The special expression for the metric follows from Lemma~\ref{Besse-Lemma} 
above. In the equations the Einstein constant appears as $2d$.
So we recall Equation~\ref{first} with $\overline{k} = \frac{2d}{n}$ 
(normalized scalar curvature of $\overline{g}$ ):
$$u^{n-2}\Big(u'^2 + \overline{k}u^2 -k \Big) = c \ \mbox{ for 
some constant } c$$
In the particular case $c=0$ 
we have the oscillator equation
$$u'^2 + \overline{k}u^2 -k = 0.$$ 
Up to scaling the standard solutions are

$u(x) = a\sin x + b \cos x$ if $\overline{k} = k = 1$,

$u(x) = \sinh x$ if $\overline{k} = -1, k=1$,

$u(x) = \cosh x$ if $\overline{k} = k = -1$,

$u(x) = e^x$ if $\overline{k} = -1, k=0$,

$u(x) = x$ if $\overline{k} = k = 0$.

\smallskip
Only the cases $u(x) = \cosh x$ and $u(x) = e^x$ lead to global warped products,
in the other cases there can be completions of a dense subset that is a warped product.

\smallskip
The solution $u = e^x$ leads to
the warped product $dx^2 + e^{2x}(dt^2 + g_k)$ with a 1-dimensional base.
This is complete whenever $g_k$ is complete and Riemannian, 
compare Case (b) in \cite[9.118]{Be87}. 
However, the global warped product $-dx^2 + e^{2x}(dt^2 + g_k)$ is not geodesically complete,
see \cite[7.41]{ON83}.

\smallskip
Similarly the Riemannian solution
$dx^2 + \sinh^2x \ dt^2 + \cosh^2x \ g_k$
is complete if and only if $g_k$ is complete. The base is the hyperbolic plane
in geodesic polar coordinates.
It is of Type III. If $g_k$ itself is the hyperbolic $(n-1)$-space then the entire space
is the hyperbolic $(n+1)$-space in geodesic normal coordinates around this $(n-1)$-subspace.
 
\smallskip
Similarly, the case $dx^2 + \cosh^2 x \ dt^2 + \sinh^2 x \ g_k$
is a warped product over the hyperbolic plane (with exceptional points 
for $x=0$).
However, there is a completion only if $g_k$ is the unit sphere.
This implies that $\overline{g}$ is the hyperbolic space in geodesic normal coordinates 
around a geodesic. This space can also be written
as a warped product with a 1-dimensional base.

\smallskip
This solution $u = x$ leads to
the Ricci flat warped product $dx^2 + dt^2 + x^2 g_k$ where 
the base is the 2-dimensional
Euclidean plane. In polar coordinates $(r,\phi)$ in the same plane 
this can be transformed into
the form $dr^2 + r^2 (d\phi^2 + \cos^2\phi \ g_k)$ with a 1-dimensional base
(with exceptional points for $x=0$). There is a completion 
only if $g_k$ is the standard unit sphere and, consequently, if $\overline{g}$
is the Euclidean space in cylindrical polar coordinates.

\smallskip
The last case $dx^2 + \sin^2 x \ dt^2 + \cos^2 x \ g_k$
is the only warped product with a compact 2-dimensional base, namely, the unit 2-sphere,
compare \cite{Ki02}.
There is a completion only if $g_k$ is the unit $(n-1)$-sphere and, consequently,
if $\overline{g}$ is the unit $(n+1)$-sphere in geodesic normal coordinates
around an equatorial circle.
 
\medskip
In the general case $c \not = 0$ we integrate Equation~\ref{first} 
as in \cite{KR97}: A zero of $u$ implies $c=0$, so we may
require the solution $u(x)$ to be strictly positive everywhere.
The equation 
$$\Big(\frac{du}{dx}\Big)^2 = u^{2-n}\big(c - \overline{k} u^n + ku^{n-2} \big)$$
for $u$ leads to the expression for the inverse function 
$$x(u) = \pm \int_{u_0}^u \sqrt{\frac{v^{n-2}}{c-\overline{k} v^n + kv^{n-2}}} \ dv.$$
By $u > 0$ there must be a positive zero of the polynomial 
$P(v) := c - \overline{k} v^n + kv^{n-2} $ in the denominator.
From the derivative $P'(v) = v^{n-3}\big(k(n-2) - \overline{k}nv^2 \big)$ 
with at most one positive zero
we see that $P$ has either at most two positive zeros of order one or
exactly one positive zero of order two 
(the zero $v=0$ of $P'$ is not relevant here since $P(0) = c \not = 0$).

\medskip 
{\sc Case 1:} There is one positive zero $u_0$ of order two, 
i.e., $P(u_0) = P'(u_0) = 0$ and $P(v) > 0$ for all $v > u_0$. 
Therefore $\overline{k} = 0$ would imply $k = 0$ and $P' \equiv 0$,
a contradiction. Hence we have 
$\overline{k} \not = 0$.
In this case the integral $\int_{u_0}^u$
is infinite for any $u > u_0$, and the integrand tends to 
$1/(\sqrt{-\overline{k}}v)$ for $v \to \infty$,
so the integral tends to $1/\sqrt{-\overline{k}}(\log u - \log u_0)$ for 
$u \to \infty$ in an asymptotic sense. 
In particular we have $\overline{k} < 0$. From $P'(u_0) = 0$ we see that 
$k = \overline{k}u_0^2n/(n-2) < 0$ as well. From $P(u_0)=0$
it follows that $c = - 2\overline{k}u_0^n/(n-2) > 0$.
For the inverse function $u(x)$ this means that $u(x)$ tends to $u_0$ for $x \to -\infty$
and that it is asymptotic to $\exp\big(\sqrt{|\overline{k}|}x\big)$ for $x \to \infty$.  
In particular $u(x) > u_0$ and $u'(x) > 0$ everywhere. 
This is Type I in the proposition.

\medskip
{\sc Case 2:} There is exactly one positive zero $u_0$ of order one, 
i.e., $P(u_0) = 0$
and $P(v) > 0$ for all $v > u_0$. Then the integral 
$\int_{u_0}^u$ converges 
for any $u > u_0$, and the integrand tends to 
$1/\sqrt{k}$ if $\overline{k} = 0$ and to
$1/(\sqrt{-\overline{k}}v)$ otherwise for $v \to \infty$.
Consequently the integral tends to $1/\sqrt{k}u$ for $u \to \infty$ 
if $\overline{k}=0$,
and the integral tends to $1/\sqrt{-\overline{k}}\big(\log u - \log u_0\big)$ 
for $u \to \infty$ 
in an asymptotic sense if $\overline{k} < 0$.
For the inverse function $u(x)$ this means that for some finite $x_0$ 
we have $u'(x_0) = 0$,
so that $dx^2 + u'^2dt^2$ describes a metric in the plane in geodesic 
polar coordinates provided that
$u''(x_0) = 1$, a condition that can be satisfied. 
This is Type II in the proposition for $\overline{k}=0$ and Type III 
for $\overline{k}<0$.

\medskip
{\sc Case 3:} There are precisely two positive zeros $u_1 < u_2$ of order one.
 For $u > u_2$ and $\overline{k} < 0$ we obtain the same as in Case 2 for $u > u_0$. For 
$u_1 < u < u_2$ and $\overline{k} > 0$ 
we see that the integral for $x(u)$  converges
on either side $u_1$ and $u_2$. This means that $u(x)$ 
is bounded and attains minimum and maximum
(actually it is periodic, Ejiri's solution \cite{Ej82}).
Therefore, if the base is complete and Riemannian, it must be compact.
However, a compact base is not possible by the following argument:
If $u'' = 1$ at the (positive) minimum then $u'' \not = -1$ 
at the (positive) maximum, as in the case
of the drop surfaces above, compare Corollary~\ref{compact}. This follows from
Lemma~\ref{droplemma}.
For the example $u(x) = 2\sqrt{2+ \cos x}$ with $n=4$ we have
$u'' = 1$ at the minimum $x = \pi$ and $u'' = -1/\sqrt{3}$
at the maximum $x = 0$.

The same argument shows that there is also no $C$-complete Lorentzian base
with infinitely many critical points of a bounded 
$u$ in the sense of \cite{KR95}.
If one starts at one critical point with $u'' = 1$ then the next critical
point turns out to be a singularity since $u'' \not = -1$:
The metric is smooth at every second critical point, 
the other ones are singularities. 

\medskip
For the converse direction we assume that the 2-dimensional base has a positive definite metric,
and $u(x)$ is a stritly positive solution according to the cases I, II, III.
Then the base is complete.
In particular $u$ is bounded below, that it, $u(x) \geq C > 0$ for a constant $C$.
We may assume $C \leq 1$.
If $u(x) \leq 1$ then $u/\sqrt{1+u^2} \geq u/2 \geq C/2$. If $u(x) > 1$ then
$u/\sqrt{1+u^2} \geq 1/2 \geq C/2$, so we obtain $u/\sqrt{1+u^2} \geq C/2$
in any case. It follows that the integral of $u/\sqrt{1+u^2}$ is unbounded
on either side of the real line.
Then the completeness of the warped product follows from Theorem 3.40 in \cite{CS}.
For the case of a Lorentz base the situation is more complicated, see \cite{CS}.
\end{proof}

\begin{lem}{\rm (compare \cite[Sec.2]{Ki02} with a similar proof)}\label{droplemma}

The differential equation
$$u'^2=\alpha-\beta u^2+\gamma u^{2-n}$$
with arbitrary $\alpha,\beta, \gamma \in \R$ and an integer $n\ge 3$
does not admit any solution $u:I \rightarrow \R$ on
an open interval $I \subseteq \R$ that attains its positive minimum $a$ and its
maximum $b$ such that
%$I\subset \R$ with $a,b \in I,$ $A=u(a)=\min \{u(x), x\in I\}>0,
%B=u(b)=\max \{u(x), x\in I\}$ i.e.
$u'(a)=u'(b)=0$ and $u''(a)=1, u''(b)=-1.$
\end{lem}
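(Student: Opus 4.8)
The plan is to convert the hypotheses into an over-determined system of algebraic equations for the three parameters $\alpha,\beta,\gamma$ evaluated at the two critical points, and then to distill from its compatibility a single polynomial equation in the ratio of maximum to minimum which has no admissible root.

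First I would set $P(v):=\alpha-\beta v^{2}+\gamma v^{2-n}$, which is smooth on $(0,\infty)$, so that the hypothesis reads $u'^{2}=P\circ u$. Since $u''=1$ at the minimum and $u''=-1$ at the maximum, $u$ cannot be constant, so the positive minimum $a$ is strictly less than the maximum $b$. Evaluating $u'^{2}=P(u)$ at the two critical points gives $P(a)=P(b)=0$. Then a second-order Taylor expansion of $u$ at each critical point, using $u'=0$ there together with the prescribed values of $u''$, and matching the two sides of $u'^{2}=P(u)$ to order two, gives $P'(a)=2$ and $P'(b)=-2$. (One cannot get these by differentiating $u'^{2}=P(u)$ and cancelling $u'$, since $u'$ vanishes at the critical points; the Taylor comparison is the clean route.)

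This produces four equations in the three unknowns $\alpha,\beta,\gamma$. I would eliminate $\alpha$ by subtracting $P(a)=0$ from $P(b)=0$, giving $\beta(b^{2}-a^{2})=\gamma(b^{2-n}-a^{2-n})$; then solve $P'(a)=2$, $P'(b)=-2$ for $\beta$ and $\gamma$ as functions of $a,b$ (the determinant of this linear $2\times2$ system is $2(2-n)ab(a^{-n}-b^{-n})$, nonzero because $n\ge 3$ and $0<a<b$); and substitute. After cancelling the positive factor $a+b$ the system collapses to the single relation
\[(n-2)(b-a)(a^{1-n}+b^{1-n})=2(a^{2-n}-b^{2-n}).\]
Multiplying by $a^{n-2}$ and writing $m:=n-2\ge 1$, $r:=b/a>1$, this becomes $F(r)=0$ with
\[F(r):=m\,r^{m+2}-(m+2)\,r^{m+1}+(m+2)\,r-m.\]

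Finally I would rule out a root of $F$ with $r>1$. A direct computation gives $F(1)=0$, $F'(1)=0$, and $F''(r)=m(m+1)(m+2)\,r^{m-1}(r-1)$, which is strictly positive for $r>1$; hence $F'$ is strictly increasing on $[1,\infty)$ and therefore positive on $(1,\infty)$, hence $F$ is strictly increasing on $[1,\infty)$ and so $F(r)>0$ for every $r>1$, contradicting $F(r)=0$. The only step with genuine room for slip-ups is the algebraic elimination yielding the displayed identity and its reduction to $F$, with its sign-sensitive handling of the exponent $2-n$; it is nonetheless purely mechanical, and the derivation of the four pointwise conditions and the positivity of $F$ are routine.
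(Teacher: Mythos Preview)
Your argument is correct and takes a genuinely different route from the paper's. Both proofs start by extracting the four pointwise conditions $P(A)=P(B)=0$, $P'(A)=2$, $P'(B)=-2$ at the critical values $A<B$, but the elimination and the final positivity argument diverge.

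The paper first argues separately that $\gamma\neq 0$ and $\beta\neq 0$, then solves the two conditions at $A$ for $\gamma$ to obtain a quadratic in $A$ (and likewise a quadratic in $B$), deduces $\alpha\beta>0$, and introduces the auxiliary variable $y=\sqrt{1+(n-2)n\alpha\beta}$ so that $A=(y-1)/(n\beta)$, $B=(y+1)/(n\beta)$. Equating the two expressions for $\gamma$ then yields, with $m=n-1$, that $y$ is a positive zero of
\[
\phi_m(x)=(x-1)^m(x+m)-(x+1)^m(x-m),
\]
and the non-existence of such a zero is shown by the recursion $\phi_m'=(m+1)\phi_{m-1}$, which forces all Taylor coefficients of $\phi_m$ at $0$ to be non-negative.

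Your route eliminates directly in the ratio $r=B/A$: solving the two $P'$-equations linearly for $\beta,\gamma$ (no preliminary non-vanishing arguments needed, since the determinant is visibly nonzero) and substituting into the relation coming from $P(A)=P(B)$ produces, with $m=n-2$, the polynomial
\[
F(r)=m\,r^{m+2}-(m+2)\,r^{m+1}+(m+2)\,r-m,
\]
and $F(r)>0$ for $r>1$ follows from the one-line convexity computation $F(1)=F'(1)=0$, $F''(r)=m(m+1)(m+2)r^{m-1}(r-1)$. The two polynomials are related by the M\"obius substitution $r=(y+1)/(y-1)$, but your parametrisation avoids the quadratic solving, the sign discussion for $\alpha\beta$, and the coefficient recursion; the resulting argument is shorter and more elementary. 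One small notational point: in the statement $a,b$ denote the \emph{points} at which the extrema are attained, while you use them for the extreme \emph{values} $u(a),u(b)$; this is harmless but worth flagging.
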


\begin{proof}
We assume that $A = u(a) > 0$ is the minimum value of the function
and $B = u(b) > A$ is the maximum value such that $u''(a) = 1, u''(b) = -1.$
First of all $\gamma = 0$ would imply that either $u$ has a zero 
(if $\beta \geq 0$) or $u$
is unbounded (if $\beta < 0$) which
is impossible by assumption. So we have $\gamma \not = 0$.
Differentiating
\begin{equation*}
\label{eq:first}
u'^2=\alpha-\beta u^2+\gamma u^{2-n}
\end{equation*}
yields
\begin{equation*}
\label{eq:second}
2 u''=- 2\beta u-(n-2)\gamma u^{1-n}\,.
\end{equation*}
Now $\beta = 0$ would imply that $u''$ cannot change its sign which
is impossible by assumption. Therefore we have $\beta \not = 0$.
We define the function
\begin{equation*}
g(x)=\alpha -\beta x^2+\gamma x^{2-n}=
x^{2-n}\big[x^{n-2}(-\beta x^2+\alpha)+\gamma\big].
\end{equation*}
With
\begin{equation*}
g'(x)=-2\beta x-(n-2)\gamma x^{1-n}
\end{equation*}
and the notations $u'(x)=g(u(x))\, , \,u''(x)=g'(u(x))/2$
the equation $g(A)=0$ implies
\begin{equation}
\label{eq:one}
\gamma=A^{n-2}\left(\beta A^2-\alpha\right).
\end{equation}
Then from $g'(A)=2 u''(A)=2$ one obtains
\begin{equation}
\label{eq:two}
\gamma=-\frac{2A^{n-1}}{n-2}\left(\beta A +1\right)\,.
\end{equation}
Equation~\ref{eq:one} and Equation~\ref{eq:two}
imply:
\begin{equation}
\label{eq:Aeins}
-\frac{2A^{n-1}}{n-2}\left(\beta A +1\right)=\gamma =A^{n-2}\left(\beta A^2-\alpha\right).
\end{equation}
Since $A \not =0$ this is eqivalent with the quadratic equation
\begin{equation*}
A^2+\frac{2}{n\beta} A-\frac{n-2}{n \beta}\,\alpha=0.
\end{equation*}
Since $A>0$ by assumption we obtain
$\alpha\beta>0$ (in particular $\alpha \not = 0$) and
\begin{equation}
\label{eq:A}
A=\frac{\sqrt{1+(n-2)n\alpha\beta} -1}{n \beta}
\end{equation}
Analogously 
\begin{equation}
\label{eq:Beins}
\gamma=B^{n-2}\left(\beta B^2-\alpha\right)=
\frac{2 B^{n-1}}{n-2}\left(1-\beta B\right)
\end{equation}
hence
\begin{equation*}
B^2-\frac{2}{n\beta} B-\frac{n-2}{n\beta}\, \alpha=0
\end{equation*}
Since $B>A>0$ and $\alpha \beta>0$
we obtain:
\begin{equation}
\label{eq:B}
B=\frac{\sqrt{1+(n-2)n\alpha\beta} +1}{n \beta}
\end{equation}
Equation~\ref{eq:Aeins} and Equation~\ref{eq:Beins}
imply
\begin{equation*}
\frac{A^{n-1}}{B^{n-1}}=\frac{\beta B-1}{\beta A+1}
\end{equation*}
With the notation 
\begin{equation*}\label{eq:y}
y:=\sqrt{1+(n-2)n\alpha\beta}>1
\end{equation*}
we obtain from Equation~\ref{eq:A} and Equation~\ref{eq:B}:
\begin{equation*}
\left(\frac{y+1}{y-1}\right)^{n-1}=\frac{y+(n-1)}{y-(n-1)}
\end{equation*}
resp.:
The positive value $y$ is a zero of the polynomial
\begin{equation*}
\phi_m(x):=\left(x-1\right)^{m}\left(x+m\right)
-\left(x+1\right)^{m}\left(x-m\right)
\,.
\end{equation*}
for $m=n-1.$
Since for $m\ge 1:$
\begin{equation*}
\phi_m'=(m+1)\phi_{m-1}
\end{equation*}
and 
\begin{equation*}
\phi_m(0)=\left\{
\begin{array}{ccc}
2m &;& m \mbox{ even}\\
0 &;& m \mbox{ odd}
\end{array}
\right.
\end{equation*}
we obtain 
that 
\begin{equation*}
\phi_m^{(k)}(0)=
\left\{
\begin{array}{ccc}
2m (m+1)\cdots(m+2-k) &;& (m-k) \mbox{ even}\\
0 &;& m \mbox{ odd}
\end{array}
\right.
\end{equation*}
Hence all coefficients $a_{k}$ of the polynomial
$\phi_m(x)=\sum_{k=0}^{m-2} a_k x^k:$
are non-negative and the coefficients $a_k$ with
$0\le k\le n-2$ and $m \equiv k \pmod{2}$ are positive.
Therefore the polynomial $\phi_m$ has no positive zero.
This contradiction finishes the proof.
\end{proof}
%%%%%%%%%%%%%%%%%%%%%%%%%%%%%%%%%%%%%%%%%%%%%%%%%%%%%%%%%%%%%%%%%

\bigskip
{\small Wolfgang K\"uhnel\\ Institut f\"ur Geometrie und Topologie\\
Universit\"at Stuttgart,
D-70550 Stuttgart\\
{\tt kuehnel@mathematik.uni-stuttgart.de}

\smallskip
Hans-Bert Rademacher\\ Mathematisches Institut\\
Universit\"at Leipzig, 
D-04081 Leipzig\\
{\tt rademacher@math.uni-leipzig.de}}


\begin{thebibliography}{99}
%\bibitem{Al72} D.V.Alekseevskii, 
%{\em Groups of conformal transformations
%of Riemannian spaces.} (russian) Mat. Sbornik {\bf 89} (131) 1972 =
%(engl.transl.) Math. USSR Sbornik {\bf 18} (1972) 285--301
%\bibitem{Al85} ---, {\em Selfsimilar Lorentzian manifolds.}
%Ann.\ Glob.\ Anal.\ Geom. {\bf 3} (1985), 59--84 
\bibitem{Be87} A.Besse,
{\em Einstein Manifolds.} Ergebnisse Math.\ 3.\ Folge, Band 10,
Springer, Berlin, 1987
\bibitem{Bo75} J.P.Bourguignon, 
{\em Une stratification de l'espace des structures riemanniennes.}
Compositio Math. {\bf 30} (1975) 1--41. 
\bibitem{BGK}
C.P.Boyer, K.Galicki \& J.Koll\'{a}r,
{\it Einstein metrics on spheres}.
Annals of Math. (2) {\bf 162} (2005) 557--580. 
\bibitem{Bm98} C.B\"ohm, 
{\it Inhomogeneous Einstein metrics on low-dimensional spheres 
and other low-dimensional spaces}.
Invent. Math. {\bf 134} (1998) 145--176. %{\em Transformation infinit\'esimales
%conformes ferm\'ees des vari\'et\'es riemanniennes 
%connexes compl\`etes.}
%C. R. Acad. Sci. Paris {\bf 270} (1970) 1593--1596
%%%%%%%%%%%%%%%%%%%%%%%%%%%%%%%%%%%%%%%%%%%%%%%%%%%%%%%%%%%
\bibitem{Br23} H.W.Brinkmann,
{\em On Riemann spaces conformal to 
Euclidean spaces.} Proc. Nat. Acad. Sci. USA {\bf 9} (1923) 1--3.
\bibitem{Br25} ---,
{\em Einstein spaces which are mapped
conformally on each other.} Math. Ann. {\bf 94} (1925) 119--145.
%%%%%%%%%%%%%%%%%%%%%%%%%%%%%%%%%%%%%%%%%%%%%%%%%%%%%%%%%%%
%\bibitem{CFS} A.\ M.\ Candela, J.\ L.\ Flores and M.\ S\'{a}nchez,
%{\em On general plane fronted waves. Geodesics}, Gen.\ Rel.\ Grav. {\bf 35},
%631--649 (2003)
\bibitem{CS} A.\ M.\ Candela \& M.\ S\'{a}nchez,
{\em Geodesics in semi-Riemannian manifolds:
Geometric properties and variational tools},
Recent developments in pseudo-Riemannian geometry 
(D.Alekseevskii, H.Baum, eds.), pp.\ 259--418,
ESI Lect.\ in Math.\ and Phys., Eur.\ Math.\ Soc.\ 2008. 
%\bibitem{Ca06} D.A.Catalano,
%{\em Closed conformal vector fields on pseudo-Riemannian 
%manifolds.} Int.\ J.\ Math.\ Math.\ Sci.\ vol.\ 2006, 
%Article ID 36545, 8 pages, 2006
\bibitem{CG} M.C.Chaki \& P.K.Ghoshal,
{\em Some global properties of quasi Einstein manifolds}. 
Publ.\ Math.\ Debrecen {\bf 63} (2003) 635-–641.
\bibitem{Cl08} R.Cleyton,
{\em Riemannian products which are conformally equivalent to Einstein metrics},
E-print 2008, arXiv:0805.3630v2 [math.DG]
\bibitem{Co00} J.Corvino,
{\em Scalar curvature deformation and a gluing construction for the 
Einstein constraint equations}, 
Commun.\ Math.\ Phys. {\bf 214} (2000) 137--189.  
%\bibitem{De80} A.Derdzinski,
%{\em Classification of certain compact Riemannian manifolds 
%with harmonic curvature and non-parallel Ricci tensor.}
%Math. Z. {\bf 172} (1980) 273--280
\bibitem{De00} A.Derdzinski, 
{\em Einstein metrics in dimension four,} 
Handbook of Differential Geometry, vol. I (F.Dillen,
L.Verstraelen, eds.), Elsevier, Amsterdam 2000, pp. 419--707.
\bibitem{DM11} A.Derdzinski \& G.Maschler, 
{\em Local classification of conformally-Einstein K\"ahler 
metrics in higher dimensions.}
Proc.\ London Math.\ Soc. (3) {\bf 87} (2003) 779--819. 
\bibitem{DS00} V.Dzhunushaliev \& H.-J. Schmidt,
{\em New vacuum solutions of conformal 
Weyl gravity,} J. Math. Phys. {\bf 41} (2000) 3007-3015.
\bibitem{EM86} D.Eardley, J.Isenberg, J.Marsden \& V.Moncrief,
{\em Homothetic and conformal symmetries of solutions to 
Einstein's equations.}
Commun.\ Math.\ Phys. {\bf 106} (1986) 137--158.
\bibitem{Ej82} N.Ejiri, {\em Some compact hypersurfaces of constant scalar
curvature in a sphere.} J.\ Geom. {\bf 19} (1982), 197-199.
%\bibitem{Fe77} J.Ferrand,
%{\em Sur une lemme d'Alekseevskii relatif aux transformations conformes.}
%C. R. Acad. Sci. Paris, S\'{e}r. A {\bf 284} (1977), 121--123 
%\bibitem{Fe96} ---,
%{\em The action of conformal transformations on
%a Riemannian manifold.} Math. Ann. {\bf 304} (1996), 277--291 
%\bibitem{Fi39} A.Fialkow,
%{\em Conformal geodesics.} Trans. Amer. Math. Soc.
%{\bf 45} (1939) 443--473
%\bibitem{Fs05} C.Frances,
%{\em Sur les vari\'et\'es lorentziennes dont le groupe 
%conforme est essentiel.}
%Math. Ann. {\bf 332} (2005), 103--119
%\bibitem{Fr08} ---,
%{\em Essential conformal structures in Riemannian and Lorentzian
%  geometry.}
% Recent developments in pseudo-Riemannian geometry 
%(D.Alekseevskii, H.Baum, eds.), pp.\ 231--260,
%ESI Lect.\ in Math.\ and Phys., Eur.\ Math.\ Soc.\ 2008. 
%%\bibitem{FT02} C.Frances \& C.Tarquini,
%{\em Autour du th\'{e}or\`{e}me de Ferrand-Obata.} 
%Ann.\ Global Anal. Geom. {\bf 21} (2002), 51--62. 
%\bibitem{GT87} D.Garfinkle \& Q.J.Tian,
%{\em Spacetimes with cosmological constant and a 
%conformal Killing field have constant curvature.}
%Class. Quantum Grav. {\bf 4} (1987) 137--139
\bibitem{GL09} A.R.Gover \& F.Leitner, 
{\em A sub-product construction of Poincar\'{e}-Einstein metrics.}
Internat.\ J.\ Math. {\bf 20} (2009), 1263--1287.
\bibitem{GN06} A.R.Gover \& P.Nurowski,
{\em Obstructions to conformally Einstein metrics in $n$ dimensions}.
J.\ Geom.\ Phys. {\bf 56} (2006) 450--484.
%\bibitem{Hl04} G.S.Hall,
%{\em Symmetries and Curvature Structure in General Relativity},
%World Scientific Publ., River Edge 2004
%\bibitem{He91} G.S.Hall \& J.D.Steele,
%{\em Conformal vector fields in general relativity,}
%J.\ Math.\ Phys. {\bf 32} (1991), 1847--1853
\bibitem{HPW} Chenxu He, P.Petersen \& W.Wylie,
{\em On the classification of warped product Einstein metrics.}
Comm.\ Anal.\ Geom. {\bf 20} (2012), 271--311. 
%E-print 2010: arXiv:math/1010.5488v2
\bibitem{Ka83} M.Kanai,
{\em On a differential equation characterizing 
a riemannian structure of a manifold.} Tokyo J. 
Math. {\bf 6} (1983) 143--151.
%\bibitem{Kb76} Y.Kerbrat,
%{\em Transformations conformes des
%vari\'et\'es pseudo--Rieman\-niennes.} J.\ Diff.\ Geom. {\bf 11}
%(1976) 547--571
%\bibitem{Kc88} M.G.Kerckhove,
%{\em Conformal transformations of
%pseudo--Riemannian Einstein manifolds.} PhD Thesis Brown Univ. 1988
%\bibitem{Kc91} ---,
%{\em The structure of Einstein spaces admitting conformal motions.}
%Class.\ Quantum Grav. {\bf 8} (1991) 819--825
%\bibitem{KM} V.Kiosak \& V.S.Matveev,
%{\em There are no conformal Einstein rescalings of complete pseudo-Riemannian
%Einstein metrics},  C.\ R.\ Math.\ Acad.\ Sci.\ Paris Ser. I
%{\bf 347} (2009), 1067--1069
%\bibitem{KN} S.Kobayashi \& K.Nomizu,
%{\em Foundations of Differential Geometry I.}
%Interscience Publ., New York 1963
\bibitem{Ki02} Dong-Soo Kim, 
{\em Compact Einstein warped product spaces}, 
Trends in Math. {\bf 5} (2002), 1--5.
\bibitem{KK03} Dong-Soo Kim \& Young Ho Kim, 
{\em Compact Einstein warped product spaces with nonpositive scalar curvature},
Proc.\ Amer.\ Math.\ Soc. {\bf 131} (2003), 2573--2576.
\bibitem{Ku88} W.K\"uhnel,
{\em Conformal transformations
between Einstein spaces.} Conformal Geometry,
aspects of math.\ E {\bf 12}
(R.S.Kulkarni, U.Pinkall, eds.), pp. 105--146,
Vieweg, Braunschweig 1988.
\bibitem{Ku02} ---,
{\em Differential Geometry. Curves -- Surfaces -- Manifolds}, 
2nd edition, AMS, Providence 2006.
\bibitem{KR95} W.K\"uhnel \& H.-B.Rademacher,
{\em Essential conformal fields in pseudo-Riemannian geometry},
J.\ Math.\ Pures Appl. (9) {\bf 74} (1995), 453--481
\bibitem{KR97} W.K\"uhnel \& H.-B.Rademacher, 
{\em Conformal vector fields on pseudo-Riemannian spaces.}
Diff.\ Geom.\ Appl. {\bf 7} (1997), 237--250.
%\bibitem{KR97b} ---, 
%{\em Essential conformal fields in pseudo-Riemannian geometry II},
%J. Math. Sci. Univ. Tokyo {\bf 4} (1997), 649--662
\bibitem{KR04} ---, 
{\em Conformal geometry of gravitational plane waves},
Geom. Ded. {\bf 109} (2004) 175--188.
%\bibitem{KR07} ---, 
%{\em Liouville's theorem in conformal geometry.}
% J.\ Math.\ Pures Appl. (9) {\bf 88} (2007), 251--260. 
\bibitem{KR08} ---, 
{\em Conformal transformations of pseudo-Riemannian manifolds.}
Recent developments in pseudo-Riemannian geometry
(D.Alekseevskii, H.Baum, eds.), pp.\ 261--298, ESI Lect.\ in Math.\ and
Phys.,\ Eur.\ Math.\ Soc.\ 2008.
\bibitem{KR09} ---, 
{\em Einstein spaces with a conformal group.}
 Res.\ Math. {\bf 56} (2009) 421--444. 
\bibitem{KR13} ---, 
{\em Conformally Einstein spaces revisited.}
Pure and Applied Differential Geometry. PADGE 2012 -- In Memory of
Franki Dillen (J. Van der Veken et al., eds.), 161--167, Shaker, Aachen 2013.
%\bibitem{Kl69} R.S.Kulkarni,
%{\em Curvature structures and conformal transformations} 
%J.\ Diff.\ Geom. {\bf 4} (1969), 425--451 
%\bibitem{La83} J.Lafontaine,
%{\em Sur la g\'eom\'etrie d'une g\'en\'eralisation 
%de l'\'equation
%diff\'erentielle d'Obata.} 
%J.\ Math.\ Pures Appl. {\bf 62} (1983) 63--72
\bibitem{La09}  J.Lafontaine,
{\em A remark about static space times.}
J.\ Geom.\ Phys. {\bf 59} (2009) 50--53. 
%{\em The theorem of Lelong--Ferrand and
%Obata.} Conformal Geometry (R.S.Kulkarni, 
%U.Pinkall, eds.).
%aspects of math.\ E {\bf 12}, Vieweg, Braunschweig 1988, pp. 65--92
\bibitem{Li01} M.Listing,
{\em Conformal Einstein spaces in $N$-dimensions.} Ann.\ Global
Anal.\ Geom. {\bf 20} (2001) 183--197.         
\bibitem{Li06} ---,  {\em Conformal Einstein spaces in 
$N$-dimensions: Part II.} J.\ Geom.\ Phys. {\bf 56} (2006) 386--404.     
%\bibitem{LF71} J.Lelong--Ferrand,
%{\em Transformations conformes et
%quasi--conformes des vari\'et\'es riemanniennes compactes
%(D\'{e}monstration de la conjecture de A.Lichnerowicz).}
%Mem.\ Cl.\ Sci., Collect.\ Octavo, Acad.\ Roy.\ Belg. {\bf 39} (1971) 3-44
%\bibitem{MM91} R.Maartens \& S.D.Maharaj,
%{\em Conformal symmetries of pp-waves.}
%Class. Quantum Grav. {\bf 8} (1991), 503--514
\bibitem{Masch} G.Maschler,
{\em Special K\"ahler-Ricci potentials and Ricci solitons}.
Ann.\ Glob.\ Anal.\ Geom. {\bf 34} (2008) 367--380.
\bibitem{MO} A.Moroianu \& L.Ornea,
{\em Conformally Einstein products and nearly K\"ahler manifolds.}
Ann.\ Global Anal.\ Geom.  {\bf 33}  (2008) 11--18. 
\bibitem{Ob62} M.Obata,
{\em Certain conditions for a Riemannian manifold to be isometric with a
sphere.} J. Math. Soc. Japan {\bf 14} (1962) 333--340.
%\bibitem{Ob71} ---,
%{\em The conjectures
% about conformal transformations.}
%J.\ Diff.\ Geom. {\bf 6} (1971) 247--258
\bibitem{ON83} B.O'Neill,
{\em Semi-Riemannian Geometry.} 
Academic Press, New York - London 1983
%\bibitem{RS97} A.Ranjan \& G.Santhanam,
%{\em A generalization of Obata's theorem.} J.\ Geometric Analysis {\bf 7}
%(1997) 357--375
%\bibitem{Sa94} M.S\'{a}nchez,
%{\em Completitud geod\'esica de variedades
%semi-riemannianas compactas.}
%Ph.D. thesis, Univ.\ Granada 1994
\bibitem{Ru09} J.M.Ruiz,
{\em On metrics of positive Ricci curvature conformal to 
$M\times \R^m$}. 
Arch.\ Math.\ (Brno) {\bf 45} (2009) 105--113. 
\bibitem{Ta65} Y.Tashiro,
{\em Complete Riemannian manifolds
and some vector fields.} 
Trans.\ Amer.\ Math.\ Soc. {\bf 117} (1965) 251--275.
\bibitem{Ta81} ---, {\em Compact four-dimensional Einstein manifolds},
J.\ Differ.\ Geom. {\bf 16} (1981), 475--481, {\em Corr.} ibid. {\bf 18}
(1983), 209. 
\bibitem{Ta85} ---,
{\em On conformal diffeomorphisms between product Riemannian manifolds},
Tsukuba J.\ Math. {\bf 9} (1985) 241--259.
\bibitem{TK} Y.Tashiro \& In-Bae Kim,
{\em Conformally related product Riemannian manifolds with Einstein parts},
Proc.\ Japan Academy Ser.\ A {\bf 58} (1982) 208--211.
\bibitem{TM67} Y.Tashiro \& K.Miyashita,
{\em Conformal transformations in complete product Riemannian
  manifolds.}
J.\ Math.\ Soc.\ Japan {\bf 19} (1967) 328--346.
\bibitem{Wa} W.Walter,
{\em Gew\"ohnliche Differentialgleichungen}, 5. Aufl., Springer 1993
\bibitem{Wo43}Y.C.Wong, {\em Some Einstein spaces with
conformally separable fundamental tensors.}
Trans.\ Amer.\ Math.\ Soc. {\bf 53} (1943) 157--194.
\bibitem{Ya57} K.Yano, {\em The Theory of Lie Derivatives and its
Applications}, North-Holland, Amsterdam 1957
%\bibitem{YN59} K.Yano \& T.Nagano,
%{\em Einstein spaces admitting a one--parameter group of conformal
%transformations.} Ann.\ Math.\ (2) {\bf 69} (1959) 451--461
%\bibitem{Yo76} Y.Yoshimatsu,
%{\em On a theorem of Alekseevskii
%concerning conformal transformations.} J.\ Math.\ Soc.\ Japan
%{\bf 28} (1976) 278--289
\end{thebibliography}
\end{document}